\documentclass[]{article}
\usepackage[margin=3cm]{geometry}
   \usepackage{tikz}
\usepackage[utf8]{inputenc}
\usepackage{mathtools,amsfonts,amsthm,mathrsfs,amssymb,bm}
\usepackage{textcomp}
\usepackage{bookmark}
\usepackage{microtype}
\usepackage{enumitem}
\usepackage{ulem}
\usepackage{todonotes}
\usepackage{algorithm}
\usepackage{algpseudocode}
\usepackage{float}

\newcommand{\cB}{\mathcal{B}}
\newcommand{\cD}{\mathcal{D}}
\newcommand{\cE}{\mathcal{E}}

\newcommand{\bD}{\mathbf{D}}
\newcommand{\wD}{\widetilde{\bD}}
\newcommand{\wN}{\widetilde{N}}
\newcommand{\wn}{\widetilde{n}}
\newcommand{\bH}{\mathbf{H}}

\newcommand{\bphi}{\bm{\phi}}

\newcommand{\pr}[1]{\mathbb{P}\left[#1\right]}
\newcommand{\esp}[1]{\mathbb{E}\left[#1\right]}

\newcommand{\pth}[1]{\left(#1\right )}

\DeclareMathOperator{\et}{and}

\usepackage[english]{babel}
\usepackage{lineno}
\usepackage{graphicx,multicol}
\usepackage{epic,eepic,epsfig}
\usepackage{amssymb}
\usepackage{color}
\usepackage{float}
\usepackage{amsmath}
\usepackage{amsthm}
\usepackage{todonotes}

\usetikzlibrary{shapes.geometric}
\setlength{\topmargin}{0cm}
\setlength{\headheight}{0.1cm}
\setlength{\headsep}{0.1cm}
\setlength{\textheight}{23.4cm}
\setlength{\oddsidemargin}{0.1cm}
\setlength{\evensidemargin}{0.1cm}
\setlength{\textwidth}{15.5cm}

 \newcommand{\withCol}[1]{#1} \newcommand{\noCol}[1]{}


\newcommand{\2}{\vspace{2mm}}


\newcommand{\be}{\begin{enumerate}}
\newcommand{\ee}{\end{enumerate}}
\newcommand{\bd}{\begin{description}}
\newcommand{\ed}{\end{description}}

\newcommand{\beq}{\begin{equation}}
\newcommand{\eeq}{\end{equation}}




\usetikzlibrary{shapes,snakes}

\renewenvironment{proof}[1][]{\par \noindent {\bf Proof#1}.\ }{\hfill$\Box$
\par \vspace{11pt}}

\newtheorem{theorem}{Theorem}[section]
\newtheorem{lemma}[theorem]{Lemma}
\newtheorem{proposition}[theorem]{Proposition}
\newtheorem{corollary}[theorem]{Corollary}
\newtheorem{claim}{Claim}

\theoremstyle{definition}
\newtheorem{definition}[theorem]{Definition}
\newtheorem{conjecture}[theorem]{Conjecture}
\newtheorem{question}[theorem]{Question}
\newtheorem{problem}[theorem]{Problem}

\newtheorem{alg}[theorem]{Algorithm}

\newcommand{\pf}{{\bf Proof: }}
\newcommand{\JBJ}[1]{{#1}}
\newcommand{\jbj}[1]{{#1}}
\newcommand{\AY}[1]{{#1}}
\newcommand{\FP}[1]{{#1}}

 \bibliographystyle{plain}

\tikzstyle{vertexX}=[circle,draw, top color=gray!10, bottom color=gray!70, minimum size=14pt, scale=0.6, inner sep=0.1pt]
\tikzstyle{vertexY}=[circle,draw, top color=green!10, bottom color=green!70, minimum size=14pt, scale=0.6, inner sep=0.1pt]
\tikzstyle{vertexZ}=[circle,draw, top color=orange!10, bottom color=orange!70, minimum size=14pt, scale=0.6, inner sep=0.1pt]
\begin{document}

\title{Vertex-partitions of 2-edge-colored graphs\thanks{Research supported by the Independent
    Research Fund Denmark under grant number DFF 7014-00037B}}
\author{J\o{}rgen Bang-Jensen\thanks{Department of Mathematics and Computer
    Science, University of Southern Denmark, Odense, Denmark (email:
    jbj@imada.sdu.dk)} \and Francois Pirot\thanks{Universit\'e Paris-Saclay, France (email francois.pirot@universite-paris-saclay.fr)}\and 
Anders Yeo\thanks{Department of
    Mathematics and Computer Science, University of Southern Denmark,
    Odense, Denmark and  Department of Mathematics, University of Johannesburg, Auckland Park, 2006 South Africa (email: yeo@imada.sdu.dk).}}

\maketitle
\begin{abstract}
  A {\bf $\mathbf{k}$-majority coloring} of a digraph $D=(V,A)$ is a coloring  of $V$ with $k$ colors so that each vertex $v\in V$ has at least as many out-neighbours of color different from its own color as it has out-neighbours with the same color as itself.
  Majority colorings have received much attention in the last years and many interesting open problems remain.
  Inspired by this and the fact that digraphs can be modelled via 2-edge-colored graphs we study several problems concerning vertex partitions of 2-edge-colored graphs. \JBJ{In particular we study vertex partitions with the property that for each $c=1,2$ every vertex $v$ has least as many edges of colour $c$ to vertices outside the set it belongs to as it has to vertices inside its own set. We call such a vertex partition with $k$ sets a {\bf $\mathbf{k}$-majority partition.} Among other things we show that every 2-edge-coloured graph has a 4-majority partition and that it is NP-complete to decide whether a 2-edge-coloured graph has a 3-majority partition. We also apply probabilistic tools to show that every  $2$-edge-colored graph $G$ of minimum color-degree $\delta$ and maximum degree $\Delta \le \frac{e^{\delta/18}}{9\delta}-2$ has a balanced majority  $3$-partition.

  }
\end{abstract}

\section{Introduction}

The following well-known fact was first observed by  Erd\H{o}s~\cite{erdosIJM3}. It is also very easy to produce such a spanning bipartite subgraph $H$ satisfying (\ref{bip2part}). 

\begin{proposition}[Erd\H{o}s~\cite{erdosIJM3}]
\label{prop:bipsp}
Every graph $G=(V,E)$ has a spanning bipartite subgraph $H$ such that
\begin{equation}
  \label{bip2part}
  d_H(v)\geq \frac{1}{2}d_G(v) \hspace{2mm}\forall v\in V
  \end{equation}
\end{proposition}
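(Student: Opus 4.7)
The plan is to prove the statement via a local-improvement (extremal) argument on bipartitions of $V$. I will consider all partitions of the vertex set $V$ into two parts $(A,B)$, and among these I will pick one that maximizes the number of edges going across, i.e.\ maximizes $|E(A,B)|$. Define $H$ to be the spanning bipartite subgraph with vertex classes $A$ and $B$ whose edge set is exactly $E(A,B)$. Then $d_H(v)$ counts the neighbours of $v$ on the opposite side of the partition, while $d_G(v)-d_H(v)$ counts the neighbours of $v$ on the same side.

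The key step is the following observation. Suppose, toward a contradiction, that some vertex $v$ has $d_H(v) < \tfrac{1}{2} d_G(v)$, so $v$ has strictly more neighbours on its own side than on the opposite side. Consider the partition obtained by moving $v$ to the other part. The edges incident to $v$ that were crossing are no longer crossing (a loss of $d_H(v)$), and the edges that were not crossing now are (a gain of $d_G(v)-d_H(v)$). Since $d_G(v)-d_H(v) > d_H(v)$, the number of crossing edges strictly increases, contradicting our choice of $(A,B)$.

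Hence for every vertex $v$ we must have $d_H(v)\geq \tfrac{1}{2} d_G(v)$, which is exactly (\ref{bip2part}). The argument is constructive: starting from any bipartition and repeatedly moving a violating vertex to the other side strictly increases $|E(A,B)|\le |E|$, so the process terminates in at most $|E|$ steps, yielding an explicit polynomial-time algorithm to produce the desired $H$.

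The only (very mild) subtlety is making sure the extremum actually exists, which is immediate since there are only finitely many bipartitions of a finite vertex set; no real obstacle arises. I do not expect to need a probabilistic argument here, since the extremal approach directly gives the per-vertex bound, whereas a uniformly random bipartition only yields the conclusion in expectation.
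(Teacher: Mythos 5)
Your proof is correct and is exactly the standard max-cut local-improvement argument that the paper alludes to when it says it is ``very easy to produce'' such a subgraph (the paper itself gives no written proof, only the citation to Erd\H{o}s). Both the extremal choice of the bipartition and the termination/algorithmic remark are sound, so nothing further is needed.
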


A {\bf majority \textbf{k}-coloring} of a (di)graph $G$ is a coloring $c:V(G)\rightarrow \{1,\ldots{},k\}$ of the vertices of $G$ satisfying that for every $i\in [k]$ each vertex of color $i$ has at least as many (out-)neighbours of color different from $i$ as it has (out-)neighbours of color $i$. By Proposition \ref{prop:bipsp} each undirected graph $G$ has a majority coloring with only 2 colors.

It is easy to see that not all digraphs have a majority coloring with only 2 colors. The smallest such example is the directed 3-cycle. In fact the following holds.

\begin{theorem}\cite{bangTCS719}
  It is NP-complete to decide whether a given digraph has a majority coloring with 2 colors.
\end{theorem}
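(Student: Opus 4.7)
The plan is to prove the two directions of NP-completeness separately. Membership in NP is immediate: given a candidate 2-colouring $c:V(D)\to\{1,2\}$, we can verify in polynomial time that at each vertex $v$ the number of out-neighbours of colour $c(v)$ does not exceed the number of out-neighbours of colour $3-c(v)$.

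For NP-hardness I would reduce from a well-chosen NP-complete problem, for instance NAE-3-SAT (Not-All-Equal 3-SAT), which remains NP-complete even in its monotone form, or equivalently the 2-colourability of 3-uniform hypergraphs. Given an instance $\phi$ with variables $x_1,\ldots,x_n$ and clauses $C_1,\ldots,C_m$, I would construct a digraph $D_\phi$ in polynomial time consisting of (i) a variable gadget for each $x_i$ that produces two vertices forced to receive opposite colours in any majority 2-colouring, so that the colour of one of them encodes the truth value of $x_i$; and (ii) for each clause $C_j$ a clause gadget attached to the three literal vertices of $C_j$ that admits a majority 2-colouring if and only if these three literal vertices do not all receive the same colour. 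Correctness is then checked in both directions: an NAE-satisfying assignment of $\phi$ extends to a majority 2-colouring of $D_\phi$ by colouring the gadget vertices consistently, and conversely any majority 2-colouring of $D_\phi$ restricts on the literal vertices to an NAE-satisfying assignment.

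The hard part will be designing the clause gadget. The majority condition is symmetric under swapping the two colours, so the same gadget has to fail both in the ``all three literals coloured 1'' case and in the ``all three literals coloured 2'' case, while remaining majority 2-colourable in every strictly non-monochromatic case. A naive attempt such as adding a single auxiliary vertex with out-arcs to the three literals fails, because that vertex can always pick the minority colour and absorb the imbalance; in particular any gadget whose state is determined by a single additional vertex will always be satisfiable by the pigeonhole principle on three literals. The working construction must therefore combine several auxiliary vertices arranged into short directed cycles together with out-neighbours whose colours are effectively forced, so that their majority constraints jointly rule out both monochromatic cases simultaneously.

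A further technical point, which I would handle once the clause and inequality gadgets are in hand, is that a literal may occur in many clauses. To keep each clause constraint local, I would work with one copy of each literal per clause and glue the copies together via chained equality gadgets (built from two inequality gadgets in series through an auxiliary vertex), so that all copies of the same literal are forced to share a common colour while each individual copy still carries only the clause-gadget out-arcs relevant to its clause. Once the gadgets are in place, polynomiality of the reduction and the two-way correspondence between NAE-satisfying assignments and majority 2-colourings are routine to verify.
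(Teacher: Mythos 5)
The paper does not actually prove this statement: it is quoted verbatim from \cite{bangTCS719}, so there is no in-paper argument to compare yours against. Judged on its own terms, your proposal is a plan rather than a proof. The NP-membership part is fine, and the choice of NAE-3-SAT as source problem (motivated by the colour-swap symmetry of the majority condition) is a sensible one. But the entire mathematical content of such a hardness proof lies in the gadgets, and you never construct any of them. You explicitly defer the clause gadget (``the hard part will be designing the clause gadget''), describing only the properties it must have, and the inequality and equality gadgets are likewise left unspecified. Your observation that a single auxiliary vertex cannot do the job is a useful sanity check, but it only tells the reader what does not work.

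What is missing, concretely, is an explicit digraph for each gadget together with a verification that (i) the clause gadget admits a majority 2-colouring exactly when its three attachment vertices are not monochromatic, (ii) the equality chains really force all copies of a literal to agree without themselves becoming uncolourable, and (iii) attaching the gadgets does not disturb the majority condition at the literal vertices themselves. Point (iii) is not a formality: in your scheme each literal copy acquires out-arcs into its clause gadget and into the equality chain, so its own out-neighbourhood --- and hence its own majority constraint --- depends on the gadget internals, which is exactly the kind of side effect such reductions must control (the paper's Section 2 constructions go to some trouble to balance degrees for this reason in the undirected analogue). Until the gadgets are exhibited and these checks carried out, the reduction is asserted to exist rather than proved to exist, so the argument has a genuine gap.
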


It is also easy to see that every acyclic digraph $D=(V,A)$ has a majority coloring with 2 colors: Let $v_1,\ldots{},v_n$ be an acyclic ordering of $V$, that is, every arc in $A$ is of the form $v_iv_j$ where $j>i$. Color $v_n$ aribitrarily and after coloring $v_{i+1},\ldots{},v_n$ we chose the color of $v_i$ so that it has at least half of its out-neighbours of the opposite color. This and the fact that we can partition the arcs of any digraph into two sets each inducing an acyclic subdigraph easily implies the following.

\begin{theorem}\cite{kreutzerEJC24}
  \label{thm:majority4}
  Every digraph has a majority coloring with 4 colors
\end{theorem}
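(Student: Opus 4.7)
The plan is to follow exactly the hint given in the paragraph preceding the statement, so the argument splits into three clean steps.

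First, I would show that the arc set of any digraph $D=(V,A)$ can be partitioned into two acyclic subdigraphs $D_1=(V,A_1)$ and $D_2=(V,A_2)$. The trick is simply to fix an arbitrary linear ordering $v_1,v_2,\dots,v_n$ of $V$, let $A_1$ consist of the ``forward'' arcs $v_iv_j$ with $i<j$, and let $A_2$ consist of the ``backward'' arcs $v_iv_j$ with $i>j$. Both $D_1$ and $D_2$ respect (opposite) linear orderings on $V$, hence are acyclic.

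Second, I would apply the observation stated just before the theorem: every acyclic digraph admits a majority $2$-coloring, produced by processing vertices in reverse topological order and assigning each vertex the color attained by at most half of its already-colored out-neighbours. Applying this to $D_1$ with color palette $\{1,2\}$ yields a coloring $c_1:V\to\{1,2\}$; applying it to $D_2$ with palette $\{1,2\}$ (using the reverse ordering) yields $c_2:V\to\{1,2\}$.

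Third, I would define the final coloring $c:V\to\{1,2,3,4\}$ by $c(v)=(c_1(v),c_2(v))$, identifying the four pairs with the four colors. For any vertex $v$, two vertices receive the same $c$-color iff they agree on both coordinates, so an out-neighbour with $c$-color equal to $c(v)$ must agree with $v$ in both $c_1$ and $c_2$. Every out-neighbour of $v$ in $D$ lies in exactly one of $D_1,D_2$ (since the arc sets partition $A$), and by the majority property in each $D_i$, at most half of the out-neighbours along $A_i$ share $c_i(v)$, and hence at most half share $c(v)$. Summing over $i=1,2$ gives that at most half of all out-neighbours of $v$ in $D$ share its color, which is exactly the majority condition for $c$.

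The only substantive point is the one-line greedy argument on acyclic digraphs, and this is already folklore; everything else is bookkeeping. I expect no real obstacle, since the $4$-coloring is simply the common refinement of two majority $2$-colorings of complementary acyclic parts.
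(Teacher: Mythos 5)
Your proposal is correct and follows essentially the same route as the paper: partition the arcs into two acyclic subdigraphs via a linear ordering, majority $2$-color each using the greedy reverse-topological argument, and take the product coloring, with the counting step (at most half of the out-neighbours within each $A_i$ share $c_i(v)$, hence at most half of all out-neighbours share the pair) carried out exactly as the paper intends. No gaps.
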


In  \cite{kreutzerEJC24} a number of results on majority colorings of digraphs were obtained.
  While the proof of Theorem \ref{thm:majority4} is very simple, the following conjecture seems much more difficult and is still widely open, even for well structured classes of digraphs such as tournaments and eulerian digraphs.

\begin{conjecture}\cite{kreutzerEJC24}
  \label{conj:3colsD}
  Every digraph has a majority coloring with 3 colors.
\end{conjecture}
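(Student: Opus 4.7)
The plan is to attack the conjecture by combining a probabilistic colouring with a structural reduction for low out-degree vertices. My first step is to colour $V(D)$ uniformly at random with three colours: for a vertex $v$ with $d^+(v)=d$, the number of out-neighbours sharing its colour is binomial with mean $d/3$, so a standard Chernoff bound gives that $v$ is \emph{bad} (has more than $d/2$ monochromatic out-neighbours) with probability at most $\exp(-c\,d)$ for some absolute constant $c>0$. The event that $v$ is bad depends only on the colours of $v$ and its out-neighbours, so it is mutually independent of all but at most roughly $(\Delta^+ + \Delta^-)\,\Delta^+$ other bad events. If every vertex had $d^+(v) \geq C\log \Delta$ for a suitable constant $C$, the Lovász Local Lemma would directly yield a majority $3$-colouring, in the same spirit as the probabilistic result stated in the abstract.

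To handle small out-degrees I would split $V(D)$ into $S=\{v \mid d^+(v)<C\log \Delta\}$ and its complement $L$. I would first colour $D[S]$ by an auxiliary procedure---for instance, induction on $|S|$, or a careful greedy step along a degeneracy ordering of $D[S]$, exploiting that bounded-out-degree digraphs have thin structure---and then extend to $L$ by the random step above. A vertex $v\in L$ still has almost all of its out-neighbours in $L$, so it remains well-concentrated after conditioning on the colours on $S$; the technical content is to control the number of out-neighbours that leak from $L$ into $S$, either by iterating between the two steps or by temporarily deleting a small set of arcs and then checking that no vertex is tipped into badness.

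The hardest case, and the real source of this conjecture's resilience, is when $D$ is strongly connected and has small constant out-degree---most notably, small tournaments or eulerian digraphs. In that regime the acyclic greedy argument behind Theorem~\ref{thm:majority4} fails, and the probabilistic bound above is simply too weak. Here I would try a local discharging/recolouring scheme, defining a potential (for example, the number of monochromatic out-edges weighted by the badness of their endpoints) that strictly decreases with each local repair. The main obstacle is precisely what separates $3$ from $4$ colours: in a digraph, recolouring $v$ can turn a previously good out-neighbour $w$ bad, and there is no a priori bound preventing such corrections from propagating cyclically. As a tractable subgoal I would first try to settle the conjecture for eulerian digraphs or for tournaments, where additional symmetry (balanced in- and out-degrees, or total orientation) might allow one to control this cyclic propagation and hopefully bootstrap the general case.
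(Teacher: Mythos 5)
The statement you are trying to prove is Conjecture~\ref{conj:3colsD}, which the paper explicitly presents as an \emph{open} conjecture of Kreutzer, Oum, Seymour, van der Zypen and Wood; the paper contains no proof of it, and your text does not constitute one either. Your first paragraph essentially reproves the known partial results already quoted in the introduction (majority $3$-colourability for digraphs of minimum out-degree larger than $72\log(3n)$, and for digraphs with $\delta^+\geq 1200$ and bounded maximum in-degree): a uniformly random $3$-colouring, a Chernoff bound of the form $e^{-cd}$, and the Local Lemma with dependency degree roughly $\Delta^+\Delta^-+\Delta^+$. That is correct as far as it goes, but it only covers the regime where every out-degree is at least logarithmic in the relevant degree parameters, which is exactly the regime the literature has already handled.

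The genuine gap is everything outside that regime, and your own text concedes it. Concretely: (i) the split into $S=\{v: d^+(v)<C\log\Delta\}$ and $L$ does not work as described, because a vertex of $S$ may have all of its out-neighbours in $L$, so no colouring of $D[S]$ alone can certify the majority condition for it, while colouring $L$ first destroys the independence structure the Local Lemma needs; (ii) the ``auxiliary procedure'' for bounded out-degree digraphs is not a routine degeneracy argument --- the directed triangle already shows that out-degree $1$ forces more than $2$ colours, and the acyclic-ordering trick that proves Theorem~\ref{thm:majority4} inherently costs a factor of $2$, giving $4$ colours, not $3$; no greedy or inductive scheme is known that gets $3$ colours for, say, tournaments or eulerian digraphs of constant degree; (iii) the discharging/recolouring scheme in your last paragraph is stated as a hope, and you yourself identify the obstruction (recolouring $v$ can make an out-neighbour bad, and these repairs can propagate around directed cycles without a decreasing potential). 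Since that obstruction is precisely what separates $3$ from $4$ colours, and you do not overcome it, the proposal proves nothing beyond the already-known high-out-degree case. There is no proof in the paper to compare against, and there is no proof here.
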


The conjecture was verified for digraphs of high out-degree in \cite{kreutzerEJC24}
\begin{theorem}\cite{kreutzerEJC24}
  Every digraph on $n$ vertices and minimum out-degree larger than $72\log{}(3n)$ has a majority 3-coloring.
\end{theorem}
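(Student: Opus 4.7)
The natural approach is the probabilistic method with a Chernoff bound and union bound. I would color each vertex of $D$ independently and uniformly at random with one of $3$ colors. For a vertex $v$ with out-degree $d^+(v) =: d$, let $B_v$ be the bad event that $v$ violates the majority condition, i.e.\ strictly more than $d/2$ of its out-neighbours receive the same colour as $v$. The goal is to show $\sum_{v} \Pr[B_v] < 1$, so that with positive probability no bad event occurs and the random colouring yields a majority $3$-colouring.

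The key calculation is bounding $\Pr[B_v]$. Conditioning on the colour assigned to $v$, the colours of the $d^+(v)$ out-neighbours of $v$ are still mutually independent and uniform in $\{1,2,3\}$, so the number $X_v$ of out-neighbours sharing $v$'s colour is distributed as $\mathrm{Bin}(d^+(v), 1/3)$, with mean $\mu = d^+(v)/3$. We want $\Pr[X_v > d^+(v)/2] = \Pr[X_v > (1+1/2)\mu]$. A multiplicative Chernoff inequality of the form
\[
\Pr[X_v \geq (1+\delta)\mu] \leq \exp\!\pth{-\delta^2 \mu/3}
\]
with $\delta = 1/2$ then gives $\Pr[B_v] \leq \exp(-d^+(v)/36)$. (The constant $72$ in the statement accommodates a looser form of the Chernoff bound; any of the standard forms yields something of the same shape $e^{-\Omega(d^+(v))}$.)

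Plugging in $d^+(v) > 72 \log(3n)$ produces $\Pr[B_v] < 1/(3n)$, so the union bound over the $n$ vertices (with room to spare) gives $\Pr[\bigcup_v B_v] < 1/3 < 1$. Consequently, a colouring avoiding every $B_v$ exists, which is exactly a majority $3$-colouring of $D$.

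The only real obstacle is choosing a Chernoff estimate tight enough to match the stated threshold $72\log(3n)$; this is routine bookkeeping rather than a conceptual difficulty. One could alternatively replace the union bound by the Lov\'asz Local Lemma to weaken the dependence on $n$ (the dependency graph of the events $B_v$ has bounded degree in terms of $\Delta(D)$), but that would give a statement of a different flavour than the one stated here, and is not needed.
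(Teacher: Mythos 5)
Your argument is correct, and it is essentially the original proof of this result from the cited reference \cite{kreutzerEJC24}: the present paper only quotes the theorem without reproving it, so there is nothing in the paper to diverge from. The one point worth noting is that the only randomness that matters for $B_v$ is the colours of the out-neighbours of $v$ conditioned on the colour of $v$, which you handle correctly, and any standard Chernoff/Hoeffding tail for $\mathrm{Bin}(d,1/3)$ exceeding $d/2$ gives a bound of the form $e^{-d/C}$ with $C\le 72$, after which the union bound over the $n$ vertices closes the proof exactly as you describe.
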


If also the maximum in-degree is bounded, then the conjecture holds for a more modest lower bound on the out-degree.

\begin{theorem}\cite{kreutzerEJC24}
  Every digraph on $n$ vertices which has minimum out-degree $\delta^+\geq 1200$ and maximum in-degree at most $\frac{e^{(\delta^+/72)}}{12\delta^+}$ has a majority 3-coloring.
\end{theorem}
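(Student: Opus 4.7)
The plan is to apply the symmetric Lovász Local Lemma (LLL) to a uniformly random $3$-coloring, along the lines of the previous theorem on large out-degree. Independently color each vertex $v\in V(D)$ with a color chosen uniformly from $\{1,2,3\}$, and for every $v$ let $A_v$ denote the bad event that strictly more than $d^+(v)/2$ out-neighbours of $v$ share $v$'s color; this is exactly the event that the majority condition fails at $v$. It suffices to show $\pr{\bigcap_v \overline{A_v}}>0$.

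Conditioning on the color of $v$, the number of out-neighbours matching $v$'s color is $\mathrm{Bin}(d^+(v),1/3)$, so a Chernoff estimate yields $\pr{A_v}\le \exp(-d^+(v)/c_1)$ for an absolute constant $c_1$ (Hoeffding gives $c_1=18$; a slightly weaker bound with $c_1=72$ will already match the statement). The event $A_v$ is determined by the colors of the set $S_v:=\{v\}\cup N^+(v)$, hence $A_v$ is mutually independent of all events $A_u$ with $S_u\cap S_v=\emptyset$. Since a vertex $w\in S_v$ can belong to $S_u$ only when $u=w$ or $u\in N^-(w)$, the number of $u$ on which $A_v$ depends is at most $(1+d^+(v))(1+\Delta^-)$, where $\Delta^-$ denotes the maximum in-degree of $D$.

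The local form of the symmetric LLL then reduces, for each vertex $v$, to verifying $e\cdot \pr{A_v}\cdot (1+d^+(v))(1+\Delta^-)\le 1$. Setting $g(x)=(1+x)\exp(-x/c_1)$, a direct derivative check shows $g$ is monotone decreasing for $x\ge c_1-1$, so the worst case of the inequality occurs at $d^+(v)=\delta^+$; the hypothesis $\delta^+\ge 1200$ places us well inside the decreasing regime, and the assumed bound $\Delta^-\le e^{\delta^+/72}/(12\delta^+)$ provides ample slack to absorb the multiplicative constants $e$ and $12$ and the $+1$'s. The main delicate point is simply to pick a concrete Chernoff inequality whose exponent matches the $\delta^+/72$ of the statement; notably no hypothesis on the maximum out-degree is required, because larger values of $d^+(v)$ improve the concentration factor $\exp(-d^+(v)/c_1)$ faster than they inflate the dependency factor $(1+d^+(v))$.
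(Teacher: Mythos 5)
First, note that the paper does not actually prove this statement: it is quoted from \cite{kreutzerEJC24} without proof, so the only internal point of comparison is Theorem~\ref{thm:balanced-3-partition}, where the analogous result for $2$-edge-colored graphs is proved by exactly the strategy you outline (random coloring, Hoeffding, Local Lemma). Your setup is correct: the bad event, the binomial/Chernoff bound $\pr{A_v}\le e^{-d^+(v)/18}$, and the dependency count $(1+d^+(v))(1+\Delta^-)$ are all right, and your closing observation --- that no bound on the maximum \emph{out}-degree is needed because the concentration term decays faster than the dependency set grows --- is indeed the crux of the matter.

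The gap is in how you invoke the Local Lemma. The symmetric version (Lemma~\ref{lem:LLL}) requires a \emph{uniform} bound $D$ on the dependency degrees, and here $|\cD_v|=(1+d^+(v))(1+\Delta^-)$ is unbounded, so it does not apply; and the ``local form'' you appeal to, namely that it suffices to check $e\cdot\pr{A_v}\cdot|\cD_v|\le 1$ for each $v$ separately, is not a valid theorem. The correct condition, from the general Local Lemma (Lemma~\ref{lem:GLL}), is $\pr{A_v}\le x_v\prod_{u\in\cD_v}(1-x_u)$, and the product on the right is \emph{not} bounded below by $1/e$ when $|\cD_v|$ is much larger than the dependency degrees of the events it contains: with, say, $x_u=1/(1+|\cD_u|)$ one only gets $\prod_{u\in\cD_v}(1-x_u)\ge \exp\bigl(-O(d^+(v)/\delta^+)\bigr)$, which tends to $0$ for vertices of huge out-degree. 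Your monotonicity argument for $g(x)=(1+x)e^{-x/c_1}$ controls the product $\pr{A_v}\cdot|\cD_v|$, but not this second source of loss, which is precisely where vertex-dependent weights are needed. The repair is routine but must be done: choose weights decaying with $d^+(v)$ (e.g.\ $x_v=e^{(1/\delta^+-1/18)d^+(v)}$, as the paper does in the proof of Theorem~\ref{thm:balanced-3-partition}, or $x_v=1/(1+|\cD_v|)$) and verify the general LLL inequality; the extra factor $\exp\bigl(O(d^+(v)/\delta^+)\bigr)$ is then absorbed by $e^{-d^+(v)/18}$ since $\delta^+\ge 1200\gg 18$, and the hypothesis $\Delta^-\le e^{\delta^+/72}/(12\delta^+)$ leaves ample room. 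So the approach is the right one and the constants work out, but as written the key step rests on a form of the Local Lemma that does not exist.
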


A number of other results on majority colorings of digraphs and Conjecture \ref{conj:3colsD} have been obtained, see e.g. \cite{anastosEJC28, anholcerDM348, giraoCPC26,jiICMAI}.

  The focus of this paper is on vertex partitions of 2-edge-colored graphs. It is well known that many problems for  digraphs such as the hamiltonian cycle problem as well as problems concerning paths and cycles in bipartite digraphs are specical cases of problems for bipartite 2-edge-colored graphs (see e.g. \cite[Chapter 16]{bang2009}). In fact, if we replace each arc $uv$ in a digraph $D=(V,A)$ by the two new edges $ux_{uv},x_{uv}v$ such that the first edge is red and the second blue and call the resulting 2-edge-coloured graph $G$, then every directed cycle $C$ in $D$ corresponds to a cycle $C'$ in $G$ where the colours of the edges alternate between red and blue. Hence the results on majority colourings of digraphs make it interesting to consider analogs for 2-edge-colored graphs. In order to avoid confusion between the colors of the edges and a vertex coloring, we use the
  equivalent notion of vertex partitions.

Let $G=(V,E)$ be a graph and let $c:E\rightarrow \{1,2\}$ be a coloring  of the edges of $E$ by 2 colors. A $k$-partition of $V$ is a partition $V=V_1\cup\ldots\cup{}V_k$ of $V$ into $k$ disjoint sets.
Such a partition is called a {\bf majority partition} if the following holds for every $c\in [2]$ and $i\in [k]$, where $d_{c,X}(v)$ denotes the number of edges $vx$ of color $c$ between $v$ and the set $X$.
\begin{equation}
  d_{c,V_i}(v)\leq d_{c,V-V_i}(v) \hspace{2mm} \forall v\in V_i
\end{equation}

Thus in a majority partition, for each color $c\in [2]$, every vertex has at least as many edges of color $c$ to the sets not containing $v$ as it has to vertices inside the set containing it. \\

Note that, just as it is the case for undirected graphs, an edge between two vertices $x,y$ in different parts of a $k$-partition
counts as a good neighbour for both $x$ and $y$. This is not the case for digraphs where an arc from $x$ to $y$ only helps $x$. Hence in some sense it is natural to think that an analogue for Conjecture \ref{conj:3colsD} should hold for majority partitions of 2-edge-colored graphs, namely that every 2-edge-colored graph has a majority partition with 3 sets. As we will see in Section \ref{sec:no3part}, this turns out to be false in a strong sense, but the following analogue of Theorem \ref{thm:majority4}
is easy to establish.

\begin{proposition}
  Every 2-edge-colored graph $G$ has a majority partition with 4 sets.
\end{proposition}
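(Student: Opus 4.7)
The plan is to mimic the digraph proof of Theorem~\ref{thm:majority4}, replacing the decomposition into two acyclic subdigraphs by the natural decomposition of $G$ into its two monochromatic subgraphs.

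First, let $G_1=(V,E_1)$ and $G_2=(V,E_2)$ be the spanning subgraphs induced by the edges of color $1$ and color $2$, respectively. Applying Proposition~\ref{prop:bipsp} to each $G_c$ yields a bipartition $V=A_c\cup B_c$ such that every vertex $v$ satisfies
\[
    d_{c,A_c}(v) \le d_{c,B_c}(v) \text{ if } v\in A_c, \qquad d_{c,B_c}(v) \le d_{c,A_c}(v) \text{ if } v\in B_c.
\]
In other words, the number of color-$c$ edges from $v$ to its own side of $(A_c,B_c)$ is at most the number of color-$c$ edges from $v$ to the opposite side.

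Next, take the common refinement of these two bipartitions: define
\[
V_1 = A_1\cap A_2,\quad V_2 = A_1\cap B_2,\quad V_3 = B_1\cap A_2,\quad V_4 = B_1\cap B_2,
\]
and discard any empty part. This is a partition of $V$ into at most $4$ sets. I claim it is a majority partition.

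To verify the defining inequality, fix $v\in V$ and consider, say, the case $v\in V_1$ (the other three cases are symmetric). For color $c=1$, since $V_1\subseteq A_1$ we have $d_{1,V_1}(v)\le d_{1,A_1}(v)$, while $B_1=V_3\cup V_4\subseteq V\setminus V_1$ gives $d_{1,B_1}(v)\le d_{1,V\setminus V_1}(v)$. Combining these with the Erdős inequality for $G_1$ yields $d_{1,V_1}(v)\le d_{1,A_1}(v)\le d_{1,B_1}(v)\le d_{1,V\setminus V_1}(v)$. The same chain of inclusions with the partition $(A_2,B_2)$ gives $d_{2,V_1}(v)\le d_{2,V\setminus V_1}(v)$. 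Both colors are handled, so the majority condition holds at $v$, and by symmetry at every vertex. There is no real obstacle here; the only mild point is to notice that each monochromatic bipartition controls its own color independently, so refinement preserves both conditions simultaneously.
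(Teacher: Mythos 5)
Your proof is correct and follows essentially the same route as the paper: decompose $G$ into its two monochromatic subgraphs, apply Erd\H{o}s's bipartition to each, and take the common refinement of the two bipartitions as the $4$-partition; your verification via the chain $d_{c,V_1}(v)\le d_{c,A_c}(v)\le d_{c,B_c}(v)\le d_{c,V\setminus V_1}(v)$ just makes explicit the step the paper leaves to the reader.
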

\pf Let $G_i=(V,E_i)$ denote the spanning subgraph of $G$ induced by the edges of color $i$ for $i=1,2$.
By Proposition \ref{prop:bipsp} $G_1$ contains a spanning bipartite subgraph $H_1=(V_1,V_2,E'_1)$ such that
$d_{H_1}(v)\geq \frac{d_{G_1}(v)}{2}$ for every $v\in V$ and $G_2$ contains a spanning bipartite subgraph $H_2=(W_1,W_2,E'_2)$ such that
$d_{H_2}(v)\geq \frac{d_{G_2}(v)}{2}$ for every $v\in V$. Now the 4-partition $V_{11},V_{12},V_{21},V_{22}$, where $V_{ij}=V_i\cap W_j$ for $i,j\in [2]$ is a majority partition of $G$ wrt. the coloring $c$. \qed\\

The smallest 2-edge-colored graph with no majority 2 partition is obtained by taking a non-monochromatic 3-cycle (both colors used). Figure \ref{fig:not2} shows another example which is both complete and eulerian (the red and the blue degrees are equal at every vertex).

\begin{figure}[H]

  \begin{center}
    \tikzstyle{vertexB}=[circle,draw, minimum size=15pt, scale=0.8, inner sep=0.5pt]
\tikzstyle{vertexBsmall}=[circle,draw, minimum size=10pt,  top color=gray!50, bottom color=gray!10, scale=0.8, inner sep=0.5pt]
\begin{tikzpicture}[scale=0.6]
  \node (x1) at (3.5,1) [vertexB]{$x_1$};
  \node (x2) at (2,4) [vertexB]{$x_2$};
  \node (x3) at (5,6) [vertexB]{$x_3$};
  \node (x4) at (8,4) [vertexB]{$x_4$};
  \node (x5) at (6.5,1) [vertexB]{$x_5$};
  \draw [color=blue] (x1) to (x2);
  \draw [color=blue] (x2) to (x3);
  \draw [color=blue] (x3) to (x4);
  \draw [color=blue] (x4) to (x5);
  \draw [color=blue] (x1) to (x5);
  \draw[color=red] (x1) to (x3);
  \draw[color=red] (x5) to (x3);
  \draw[color=red] (x2) to (x4);
  \draw[color=red] (x1) to (x4);
  \draw[color=red] (x5) to (x2);

  \end{tikzpicture}
\end{center}
\caption{A 2-edge-colored complete graph which has no majority partition with two sets.}\label{fig:not2}
  \end{figure}
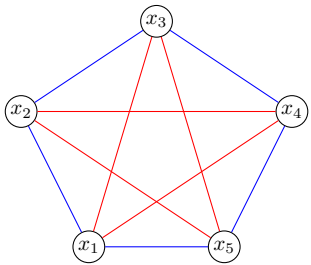

  To see that the 2-edge-coloured complete graph in Figure \ref{fig:not2} does not have a majority 2-partition, suppose that $V_1,V_2$ is a majority partition of $\{x_1,x_2,x_3,x_4,x_5\}$. We may assume that $x_1\in V_1$ and $x_2\in V_2$. If $x_3\in V_2$, then we must have $x_4\in V_1$ or $x_3$ would have no blue edge to $V_1$. This forces $x_5\in V_2$ as both of its blue neighbours are in $V_1$ but now $x_5$ has no red edge to $V_1$. Hence we conclude that $x_3\in V_1$ and then $x_4\in V_2$ as $x_1$ must have a red edge to $V_2$. This forces $x_5\in V_1$ as $x_2$ must have a red edge to $V_1$. But now $x_3$ has no red neighbour in $V_2$, contradiction.\\

  \JBJ{This paper is organized as follows.  In Section \ref{sec:2part} we prove that it is NP-complete to decide whether a 2-edge-coloured graph has a majority 2-partition. We also prove that already to decide the existence of a 2-partition where each vertex has edges of both colours to the other set is NP-complete. In Section
    \ref{sec:no3part} we give a construction of a 2-edge coloured graph that has no majority 3-partition and based on this construction we show in Section \ref{sec:NPC3part} that it is NP-complete to decide whether a 2-edge-coloured graph has a majority 3-partition. In Section \ref{sec:proba} we use probabilistic tools to prove some results on balanced (majority) partitions of 2-edge-coloured graphs. In Section \ref{sec:spanning} we first prove that no matter how high the edge-connectivity of the subgraphs induced by the two edge colors of a 2-edge-coloured graph $G$ is, there may not exist any 2-partition of $G$ which is connected in both colors. Then we prove that it is NP-complete to decide whether a given 2-edge-coloured graph with edges $E=E_1\cup E_2$, where $E_i$ are the edges  of colour $i$ has a 2-partition $V_1,V_2$ such that the bipartite spanning subgraph $B_i=(X_1,X_2,E'_i)$ is connected for $i=1,2$ where $E'_i$ consists of those  edges of $E_i$ with precisely one end in $X_1$.
    Finally in Section \ref{sec:remarks} we discuss some open problems related to majority partitions of 2-edge-colored graphs.
}


\section{(Majority) partitions with 2 sets}\label{sec:2part}

In this section we denote the two colors of a 2-edge-coloring by red and blue. The first result deals with 2-partitions so that each vertex has at least one red and one blue edge to the other part.
\begin{theorem}
  \label{thm:atleastone}
  It is NP-complete to decide whether a given 2-edge-colored graph $G$ has spanning bipartite subgraph $B$ such that
  \begin{equation}
    \label{goodbip}
    \min\{d_{1,B}(v),d_{2,B}(v)\}\geq 1 \hspace{2mm}\forall{} v\in V
    \end{equation}
  \end{theorem}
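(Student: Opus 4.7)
The plan is to first observe that the problem is in NP, and then reduce from a well-known NP-hard problem. Membership in NP is immediate: a 2-partition $(V_1,V_2)$ of $V(G)$ (equivalently, the spanning bipartite subgraph formed by all edges of $G$ with endpoints in different parts) is a polynomial-size certificate whose validity under (\ref{goodbip}) can be checked in linear time.

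For NP-hardness, I would reduce from 2-colorability of 3-uniform hypergraphs (equivalently, Not-All-Equal 3-SAT), which is well known to be NP-complete. The basic building block is a \emph{forcing pendant}: a vertex $p$ of degree~$2$ with exactly one red edge $pu$ and one blue edge $pv$. Condition (\ref{goodbip}) at $p$ forces both $u$ and $v$ to lie on the side opposite $p$, so $u$ and $v$ end up in the same part. By chaining two such pendants through a small internal path, e.g.\ the 3-edge path $u\,q_1\,q_2\,v$ with colors red, blue, red, one obtains an \emph{opposite-part gadget}: the constraints at $q_1$ force $u$ and $q_2$ to be on the same side opposite $q_1$, those at $q_2$ force $v$ and $q_1$ to be on the same side opposite $q_2$, and since the blue edge $q_1q_2$ forces $q_1$ and $q_2$ to opposite sides, one concludes that $u$ and $v$ end up in opposite parts.

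Starting from a 3-uniform hypergraph $H=(V,\mathcal{E})$, I would then construct $G_H$ as follows. Introduce two reference vertices $r_1,r_2$ and attach an opposite-part gadget between them so that $r_1,r_2$ lie in different parts of every valid partition. For each $v\in V$, attach small pendant structures anchored at $r_1$ and at $r_2$ so that $v$ has both a red and a blue neighbour on each side of the partition; this makes (\ref{goodbip}) trivially satisfied at $v$ while leaving the part of $v$ itself a genuinely free binary choice. Finally, for each hyperedge $e=\{a,b,c\}\in\mathcal{E}$ introduce two witness vertices $u_e,u'_e$: give $u_e$ red edges to $a,b,c$ together with a blue edge to $r_1$, and give $u'_e$ red edges to $a,b,c$ together with a blue edge to $r_2$. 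The blue condition at $u_e$ (resp.\ $u'_e$) forces it opposite $r_1$ (resp.\ $r_2$), and its red condition then forces at least one of $a,b,c$ to lie on the same side as $r_1$ (resp.\ $r_2$). Since $r_1$ and $r_2$ are in opposite parts, this says exactly that $\{a,b,c\}$ is non-monochromatic. Conversely, any proper 2-coloring of $H$ extends in the obvious way to a valid partition of $G_H$ satisfying (\ref{goodbip}) everywhere.

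The hard part will be the gadget bookkeeping: every auxiliary vertex (forcing pendant, reference, or witness) must itself satisfy (\ref{goodbip}), so one needs to equip $r_1,r_2$ and the various pendant anchors with enough extra colored edges (themselves attached via further pendants forcing them into the correct side) without introducing spurious equalities or inequalities that would restrict the free choice of partition on $V$. Once every local gadget is verified to be internally consistent regardless of the $2$-coloring of $V$, the global correctness of the reduction follows by routine two-way verification.
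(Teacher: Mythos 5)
Your reduction is sound, but it takes a genuinely different route from the paper's. The paper reduces from 3-SAT: it gives every vertex red degree exactly one (the red edges form a perfect matching), so every red edge is forced to cross the bipartition; this simultaneously pairs each clause vertex $w_j$ with a private partner $w'_j$ and each literal vertex $v_i$ with $\bar v_i$, and the clause constraint ``at least one literal true'' is then read off from the blue edges of $w_j$ after anchoring all the $w_j$ to one side via a vertex $z_1$ sitting on a self-sufficient alternating $4$-cycle. You instead reduce from $2$-colorability of $3$-uniform hypergraphs (NAE-3-SAT), use degree-$2$ forcing pendants and a red--blue--red path as your same-side and opposite-side gadgets, and encode each hyperedge by a symmetric pair of witnesses $u_e,u'_e$ forced to opposite reference sides, which captures ``non-monochromatic'' directly. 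Both mechanisms exploit the same underlying trick --- a vertex with a unique edge of some color forces that edge to cross --- but your construction is more modular and symmetric, while the paper's is leaner and is deliberately engineered so that the same graph (with small modifications) also drives the NP-completeness proof for majority $2$-partitions in the next theorem, where the exact red/blue degrees matter and your ``add enough pendants so the condition is trivially satisfied'' step would no longer be harmless. Two small remarks: the observation that the blue edge $q_1q_2$ forces $q_1,q_2$ to opposite sides is already implied by the degree-$2$ constraints at $q_1$ and $q_2$, so it is redundant; and the deferred ``gadget bookkeeping'' is genuinely routine but must be done --- e.g.\ each anchor $w$ that you want pinned to $r_1$'s side can be given two pendants, one with a red edge to $w$ and a blue edge to $r_1$ and one with the colors swapped, which pins $w$, satisfies both pendants, and hands $w$ (and $r_1$) a crossing edge of each color without constraining any variable vertex. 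With that spelled out, your two-way verification goes through.
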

\pf We reduce from 3-SAT so let ${\cal F}=C_1\wedge{}C_2\wedge\ldots\wedge{}C_m$ be a in instance of 3-SAT over the variables $x_1,\ldots{},x_n$. From this instance we construct a graph $G=(V,E)$ and a 2-coloring $c:E\rightarrow \{red,blue\}$ as follows: The vertex set of $G$ consists of
\begin{itemize}
\item Two vertices $w_j,w'_j$ for each $j\in [m]$ (corresponding to the $j$th clause in $\cal F$).
\item Two vertices $v_i,\bar{v}_i$ for each $i\in [n]$ (corresponding to the two litterals over the variable $x_i$).
  \item Eight extra vertices $z_1,z_2,z_3,z_4,h_1,h_2,h_3,h_4$
  \end{itemize}

  The edge set of $G$ is

  \begin{itemize}
  \item red edges $\{w'_jw_j| j\in [m]\}$
  \item red edges $\{v_i\bar{v}_i|i\in [n]\}$
  \item red edges $z_2z_3,z_4z_1,h_1h_2,h_3h_4$
   \item blue edges $z_1z_2,z_3z_4,h_2h_3,h_4h_1$
   \item blue edges $\{z_1w'_j|j\in [m]\}$
     \item blue edges $\{h_1v_i,h_1\bar{v}_i|i\in [n]\}$.
  \item For each $j\in [m]$: and $i\in [n]$: if $x_i$ ($\bar{x}_i$)
    is a literal of $C_j$ then $w_jv_i$ ($w_j\bar{v}_i$) is a blue edge.
  \end{itemize}

  This completes the description of $G$ which can clearly be constructed from $\cal F$ in polynomial time. See Figure \ref{fig1} for an example.\\
  
  We claim that $G$ has a spanning bipartite subgraph satisfying (\ref{goodbip}) if and only if $\cal F$ is satisfiable.\\

  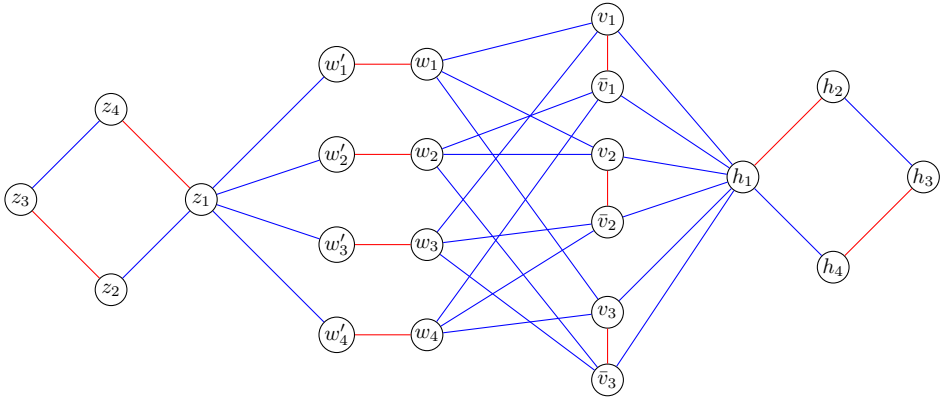
\begin{figure}[H]
    \begin{center}
      \tikzstyle{vertexB}=[circle,draw, minimum size=15pt, scale=0.8, inner sep=0.5pt]
\tikzstyle{vertexBsmall}=[circle,draw, minimum size=10pt,  top color=gray!50, bottom color=gray!10, scale=0.8, inner sep=0.5pt]
\begin{tikzpicture}[scale=0.6]
  
  \node(w'1) at (5,10)[vertexB] {$w'_1$};
  \node(w1) at (7,10)[vertexB] {$w_1$};
  \draw [color=red] (w'1) to (w1);

  \node(w'2) at (5,8)[vertexB] {$w'_2$};
  \node(w2) at (7,8)[vertexB] {$w_2$};
  \draw [color=red] (w'2) to (w2);

  \node(w'4) at (5,4)[vertexB] {$w'_4$};
  \node(w4) at (7,4)[vertexB] {$w_4$};
  \draw [color=red] (w'4) to (w4);

   \node(w'3) at (5,6)[vertexB] {$w'_3$};
  \node(w3) at (7,6)[vertexB] {$w_3$};
  \draw [color=red] (w'3) to (w3);

  \node(x1) at (11,11) [vertexB] {$v_1$};
  \node(x1b) at (11,9.5)[vertexB] {$\bar{v}_1$};
  \draw [color=red] (x1) to (x1b);

  \node(x2) at (11,8) [vertexB] {$v_2$};
  \node(x2b) at (11,6.5)[vertexB] {$\bar{v}_2$};
  \draw [color=red] (x2) to (x2b);

  \node(x3) at (11,4.5) [vertexB] {$v_3$};
  \node(x3b) at (11,3)[vertexB] {$\bar{v}_3$};
  \draw [color=red] (x3) to (x3b);

  \node (b1) at (14,7.5) [vertexB] {$h_1$};
  \node (b2) at (16,9.5) [vertexB] {$h_2$};
  \node (b3) at (18,7.5) [vertexB] {$h_3$};
  \node (b4) at (16,5.5) [vertexB]{$h_4$};
  \draw [color=red] (b1) to (b2);
  \draw [color=red] (b3) to (b4);
  \draw [color=blue] (b1) to (b4);
  \draw [color=blue] (b3) to (b2);

  \node (z3) at (-2,7) [vertexB] {$z_3$};
  \node (z4) at (0,9) [vertexB] {$z_4$};
  \node (z1) at (2,7) [vertexB] {$z_1$};
  \node (z2) at (0,5) [vertexB]{$z_2$};
  \draw [color=red] (z1) to (z4);
  \draw [color=red] (z3) to (z2);
  \draw [color=blue] (z1) to (z2);
  \draw [color=blue] (z3) to (z4);

  \draw [color=blue] (z1) to (w'1);
  \draw [color=blue] (z1) to (w'2);
  \draw [color=blue] (z1) to (w'3);
  \draw [color=blue] (z1) to (w'4);

  \draw [color=blue] (w1) to (x1);
  \draw [color=blue] (w1) to (x2);
  \draw [color=blue] (w1) to (x3);

  \draw [color=blue] (w2) to (x1b);
  \draw [color=blue] (w2) to (x2);
  \draw [color=blue] (w2) to (x3b);

  \draw [color=blue] (w3) to (x1);
  \draw [color=blue] (w3) to (x2b);
  \draw [color=blue] (w3) to (x3b);

  \draw [color=blue] (w4) to (x1b);
  \draw [color=blue] (w4) to (x2b);
  \draw [color=blue] (w4) to (x3);

  \draw [color=blue] (x1) to (b1);
  \draw [color=blue] (x2) to (b1);
  \draw [color=blue] (x3) to (b1);
  \draw [color=blue] (x1b) to (b1);
  \draw [color=blue] (x2b) to (b1);
  \draw [color=blue] (x3b) to (b1);

\end{tikzpicture} \hfill

    \end{center}
    \caption{The 2-edge-colored graph $G$ corresponding to the 3-SAT formula ${\cal F}= (x_1\lor{}x_2\lor{}x_3)\wedge{}(\bar{x}_1\lor{}x_2\lor{}\bar{x}_3)\wedge{}(x_1\lor{}\bar{x}_2\lor{}\bar{x}_3)\wedge{}(\bar{x}_1\lor\bar{x}_2\lor{}x_3)$}\label{fig1}
  \end{figure}

  For each $j\in [m]$ let $W_j$ denote the set of the 3 neighbors of $w_j$ in $\{v_1,\ldots{},v_n\}\cup\{\bar{v}_1,\ldots{},\bar{v}_n\}$. It is easy to check that the following claim is true.\\

  \begin{claim}
    \label{cl:oppositesets}
    In any spanning bipartite subgraph $B=(V_1,V_2,E')$ of $G$ which satisfies (\ref{goodbip}) the following holds:

  \begin{itemize}
    \item The vertices $w_j$ and $w'_j$ must belong to opposite sets in the bipartition of $B$ for every $j\in [m]$
  and similarly
  $v_i$ and $\bar{v}_i$ must belong to opposite sets in the bipartition of $B$ for every $i\in [n]$.
\item $z_1,z_3$ and $\{w_1,w_2,\ldots{},w_m\}$ are in the same set $V_i$
  \item $h_1$ and $h_3$ are in the same set and $h_2,h_4$ are in the other set.
  \item For each $j\in [m]$ we have $W_j\cap V_{3-i}\neq\emptyset$, where $V_i$ is the set
    containing $w_j$.\qed
  \end{itemize}
  \end{claim}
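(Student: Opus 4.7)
The plan is to notice that condition (\ref{goodbip}) becomes extremely restrictive on this particular graph because most of the gadget vertices have only one incident edge of some color; whenever a vertex $v$ has exactly one red (resp.\ blue) edge in $G$, its unique red (resp.\ blue) neighbor is forced into the opposite part of any bipartite $B$ satisfying (\ref{goodbip}). So I would begin by tabulating, for each vertex, its incident red and blue edges, and then read off the forced placements.

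For the first bullet, $w'_j$ has $w'_jw_j$ as its only red edge, forcing $w_j$ and $w'_j$ into opposite parts; the same argument with $v_i\bar v_i$, the only red edge at $v_i$, handles the literal vertices. For the second bullet, I would work through the $z$-gadget: the only red edge at $z_1$ is $z_4z_1$ and the only red edge at $z_3$ is $z_2z_3$, forcing $z_1$ opposite $z_4$ and $z_3$ opposite $z_2$; the only blue edge at $z_3$ is $z_3z_4$, forcing $z_3$ opposite $z_4$ as well. Combining these three forced pairs places $z_1$ and $z_3$ on the same side and $z_2,z_4$ on the other. Then for every $j\in[m]$, the only blue edge at $w'_j$ is $z_1w'_j$, so $w'_j$ sits opposite $z_1$, and by the first bullet $w_j$ therefore sits on the same side as $z_1$ (and $z_3$).

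For the third bullet I would run an entirely symmetric argument on the $h$-gadget: the only blue edge at $h_2$ is $h_2h_3$ and the only blue edge at $h_4$ is $h_4h_1$, while $h_1h_2$ is the only red edge at $h_2$ and $h_3h_4$ is the only red edge at $h_4$. These four forced placements immediately put $h_1,h_3$ on one side and $h_2,h_4$ on the other. Finally, for the last bullet, I would observe that the blue neighbors of $w_j$ in $G$ are exactly the three vertices of $W_j$: the edge $w_jw'_j$ is red, and $w_j$ is not adjacent to any vertex of the $z$- or $h$-gadget. Since (\ref{goodbip}) requires a blue edge from $w_j$ to the opposite side, at least one vertex of $W_j$ must lie in $V_{3-i}$.

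There is no real obstacle here beyond careful bookkeeping; the potential trap is forgetting which of the two colors is forcing a given pair apart (e.g.\ that $w_j,w'_j$ are separated by the red constraint, not the blue one), so in writing up I would present the arguments in the order above, making the unique-neighbor argument explicit at each step.
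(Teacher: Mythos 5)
Your proof is correct and is exactly the "easy check" the paper alludes to (the paper states the claim with no written argument): every forced placement follows from the observation that the red edges form a perfect matching and that $z_2,z_3,z_4,h_2,h_3,h_4,w'_j$ each have a unique blue neighbour, and your bookkeeping of these forced pairs, together with the fact that the blue neighbourhood of $w_j$ is exactly $W_j$, correctly yields all four bullets.
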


  Suppose first that $G$ has a spanning bipartite subgraph $B=(V_1,V_2,E')$ of $G$ which satisfies (\ref{goodbip}). Let $V_i$ be the set containing all $w_j$ vertices. Then we set the variable $x_i$ to true precisely if $v_i\in V_{3-i}$. By the observations  above this is a valid truth assignment and each clause is satisfied as $W_j\cap V_{3-i}\neq\emptyset$ for $j\in [m]$.\\
  Suppose now that $\phi{}$ is a satisfying truth assignment of $\cal F$. Let $V_1,V_2$ be the 2-partition defined by letting $V_2$ contain
  \begin{itemize}
  \item those vertices $v_i$ for which $\phi{}(x_i)=true$
  \item those vertices $\bar{v}_i$ which $\phi{}(x_i)=false$
  \item All vertices of $\{w'_1,w'_2,\ldots{},w'_m\}$
    \item The vertices $\{z_2,z_4,h_1,h_3\}$
    \end{itemize}
    and let $V_1=V-V_2$.

    By construction every vertex is incident to  a red edge going between $V_1$ and $V_2$ and
    it is easy to see that all vertices in $\{z_1,z_2,z_3,z_4,h_1,h_2,h_3,h_4\}\cup \{w'_1,\ldots{},w'_m\}$ and are incident to a blue edge to the opposite set. As $\phi$ makes at least one literal of $C_j$ true for each $j\in [m]$ we have $W_j\cap V_2\neq\emptyset$ for $j\in [m]$ so all vertices in $\{w_1,\ldots{},w_m\}$ have a blue edge to $V_1$. Finally every vertex in $\{v_1,\ldots{},v_n\}\cup\{\bar{v}_1,\ldots{},\bar{v}_n\}$ has a blue edge to $h_1\in V_2$, implying that the remaining vertices in $V_1$ also have a blue edge to $V_2$. As every literal of $\cal F$ appears in some clause we finally conclude that every vertex in $V_2\cap (\{v_1,\ldots{},v_n\}\cup\{\bar{v}_1,\ldots{},\bar{v}_n\})$ has a blue edge to $V_1$. This shows that the bipartite graph induced by the partition $V_1,V_2$ satisfies (\ref{goodbip}) \qed\\

    A 2-partition in which every vertex has at least one red and one blue edge to the other part is not necessarily a majority partition but the result below shows that it is also hard in general to obtain such a partition

    \begin{theorem}
      \label{thm:2partNPC}
      It is NP-complete to decide whether a given 2-edge-colored graph $G$ has a majority partition with two sets.
    \end{theorem}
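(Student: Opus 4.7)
The plan is to reduce from 3-SAT. The construction used for Theorem~\ref{thm:atleastone} cannot be reused directly: at a vertex of degree at least two in some colour, the majority condition is strictly stronger than ``at least one edge of that colour crosses the partition''. The main design principle is therefore to keep every vertex of the reduction graph of red-degree at most one (so that its unique red edge is forced to cross), and to pad each literal vertex with enough private blue pendants that the blue majority condition at the literal becomes vacuous; this concentrates all of the logical content on a single small gadget per clause.

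Given a 3-SAT formula $\mathcal F=C_1\wedge\cdots\wedge C_m$ over variables $x_1,\ldots,x_n$, I would construct the following 2-edge-coloured graph $G$. The vertex set consists of an anchor $w$; two vertices $c_j,d_j$ for each clause $C_j$; two vertices $v_i,\bar v_i$ for each variable $x_i$; and, for each literal $v_i$ (resp.\ $\bar v_i$), exactly $a_i$ (resp.\ $\bar a_i$) private pendants, where $a_i$ (resp.\ $\bar a_i$) is the number of occurrences of the literal $x_i$ (resp.\ $\bar x_i$) in $\mathcal F$. The edge set consists of the \emph{red} edges $\{wc_j:j\in[m]\}\cup\{v_i\bar v_i:i\in[n]\}$ together with the \emph{blue} edges $\{c_jd_j:j\in[m]\}$, a blue edge between $c_j$ and every literal vertex of $C_j$ for each $j$, and a blue edge between each pendant and its corresponding literal vertex.

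The key structural observations are as follows. Every vertex other than $w$ has red-degree exactly one, so majority in red forces every red edge to cross in any majority $2$-partition $(V_1,V_2)$. This yields (i) $v_i$ and $\bar v_i$ lie in opposite parts, and (ii) every $c_j$ lies opposite $w$, so up to swapping $V_1$ and $V_2$ we may assume $w\in V_2$ and every $c_j\in V_1$. Each pendant has blue-degree one, so majority in blue places it opposite its unique neighbour; in particular $d_j\in V_2$ for every $j$. The literal vertex $v_i$ has blue-degree $2a_i$, and its $a_i$ pendants automatically cross: this alone accounts for half of the blue edges at $v_i$, so its blue majority condition is satisfied regardless of the locations of its clause neighbours, and the symmetric statement holds for $\bar v_i$. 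The only remaining non-trivial blue constraint is at each $c_j$, which has blue-degree $4$ with one crossing already (the edge to $d_j$); majority in blue at $c_j$ is therefore equivalent to the condition that at least one of the three literal neighbours of $c_j$ lies in $V_2$.

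Setting $\phi(x_i)=\text{true}$ iff $v_i\in V_2$ now extracts a satisfying assignment of $\mathcal F$ from any majority $2$-partition. Conversely, given a satisfying $\phi$ I would place $w,d_j$ and the true literals in $V_2$, the $c_j$'s and the false literals in $V_1$, and each pendant opposite its neighbour; the verification that every majority inequality holds is immediate from the degree computation above. NP membership is clear (guess the partition and count edges), completing the reduction. The principal obstacle is the correct choice of padding: too few pendants at $v_i$ would allow the blue majority condition at $v_i$ to impose extra constraints on its clause neighbours and break the clean correspondence between partitions and satisfying assignments, whereas padding with exactly $a_i$ pendants balances the blue-degree so that the literal-level constraint becomes automatic.
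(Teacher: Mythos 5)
Your proof is correct and follows essentially the same strategy as the paper's: a reduction from 3-SAT in which the red edges form a forced-crossing matching-like structure, each clause vertex has blue degree four with one crossing guaranteed by a companion vertex (your $d_j$, the paper's $w''_j$) so that the clause constraint becomes ``at least one literal neighbour on the other side'', and literal vertices are padded with extra blue neighbours to neutralise their blue majority condition. Your gadgets (blue pendants and a single anchor $w$) are somewhat simpler than the paper's alternating $4$-cycles and $z$-gadget, but the logical skeleton is identical.
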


    \pf Again we reduce from 3-SAT. Let ${\cal F}=C_1\wedge{}C_2\wedge\ldots\wedge{}C_m$ be a in instance of 3-SAT over the variables $x_1,\ldots{},x_n$. First note that the graph $G$ constructed in the previous proof will not allow a majority 2-partition as soon as some variable $x_i$ occurs at least twice as the literal $x_i$ and at least twice as the literal $\bar{x}_i$. This follows from the fact that all the vertices $w_1,w_2,\ldots{},w_m$ must belong to the same set of every majority 2-partition (by the same argument as we used in the proof above). In particular the graph in Figure \ref{fig1} has no majority 2-partition. We now show how to change the graph $G$ from the proof above to a new 2-edge-colored graph $G^*$ such that this has a majority partition with two sets if and only if $\cal F$ is satisfiable.
    For each $i\in [n]$ let $q_i$ be the maximum of the number of occurences of $x_i$ in clauses and the number of occurences of $\bar{x}_i$ in clauses and change $G$ as follows:

    \begin{itemize}
   
      \item Delete the blue edges between $\{v_1,\ldots{},v_n\}\cup\{\bar{v}_1,\ldots{},\bar{v}_n\}$ and $h_1$.
    \item Introduce new vertices $\{w''_j|j\in [m]\}$ and add blue edges $\{w''_jw_j|j\in [m]\}$ and red edges $\{w''_jz_1|j\in [m]\}$.
      \item For each $i\in [n]$ and $r\in [q_i]$ add a copy  $H_{i,r}$ of the subgraph of $G$ induced by the vertices $\{h_1,h_2,h_3,h_4\}$ (it is an alternating 4-cycle) and join $v_i$ by a blue edge to the vertex corresponding to $h_1$ in each of the subgraphs $H_{i,1},\ldots{},H_{i,q_i}$. Similarly, for each $i\in [n]$ and $r\in [q_i]$ add a copy  $\bar{H}_{i,r}$ of the subgraph of $G$ induced by the vertices $\{h_1,h_2,h_3,h_4\}$ and join $\bar{v}_i$ by a blue edge to the vertex corresponding to $h_1$ in each of the subgraphs $\bar{H}_{i,1},\ldots{},\bar{H}_{i,q_i}$.
      \end{itemize}

      Note that, as it was the case for $G$, the red edges form a perfect matching of $G^*$ and hence the two vertices of each red edge must belong to different sets in any majority partition with two sets.

      It is not difficult to check that Claim \ref{cl:oppositesets} also holds for $G^*$ where we let $h_1,h_2,h_3,h_4$ refer to arbitrary copies of those vertices in $G^*$. Here we used that the blue degree of each vertex $w_j$, $j\in [m]$ is 4. Thus the argument that $\cal F$ is satisfiable if $G^*$ has a majority partition with 2 sets is analogous to the first part of the proof of Theorem \ref{thm:atleastone}.

      Suppose now that $\phi$ is a satisfying truth assignment for $\cal F$. Then we define the following 2-partition $V_1,V_2$ of $G^*$.
      Start by letting $V_1=\emptyset=V_2$

      \begin{itemize}
        \item For those vertices $v_i$ for which $\phi{}(x_i)=true$: add $v_i$ to $V_2$, add all vertices corresponding to $h_1,h_3$ in $H_{i,r}$, $r\in [q_i]$ to $V_1$ and all vertices corresponding to $h_2,h_4$ in $H_{i,r}$ to $V_2$.
  \item For those vertices $\bar{v}_i$ which $\phi{}(x_i)=false$: add $\bar{v}_i$ to $V_2$, add all vertices corresponding to $h_1,h_3$ in $\bar{H}_{i,r}$, $r\in [q_i]$ to $V_1$ and all vertices corresponding to $h_2,h_4$ in $\bar{H}_{i,r}$ to $V_2$.
  \item Add all vertices of $\{w'_1,w'_2,\ldots{},w'_m\}$, $\{w''_1,\ldots{},w''_m\}$ and $\{z_2,z_4\}$ to $V_2$.
    \item add all vertices not placed yet to $V_1$.
    \end{itemize}

    Let $B^*$ be the bipartite spanning subgraph of $G^*$ induced by the partition $V_1,V_2$.
    It is easy to check that  all vertices in every copy of $H$ as well as all vertices in
    $\{z_1,z_2,z_3,z_4\}\cup\{w'_1,w'_2,\ldots{},w'_m\}\cup \{w''_1,\ldots{},w''_m\}$ have both blue and red degree in $B^*$ at least half of their red/blue degree in $G^*$. Furthermore each vertex in $w_j$, $j\in [m]$ belongs to $V_1$ and has its only red edge going to $V_2$ and at least two of its four blue edges go to a vertex in $V_2$ because $w''_j\in V_2$ and since $\phi$ is a satisfying truth assignment, at least one vertex of $W_j$ is also in $V_2$. Finally every vertex in $\{v_1,\ldots{},v_n\}\cup\{\bar{v}_1,\ldots{},\bar{v}_n\}$ has the same red degree (one) in $B^*$ as in $G^*$ and the  blue degree of $v_i$, $\bar{v}_i$ in $B^*$ is at least $q_i$ which, by the definition of the $q_i$'s, is at least half the blue degree of $v_i$, $\bar{v}_i$ in $G^*$. This shows that $V_1,V_2$ is a majority partition of $G^*$.\qed

    \section{A 2-edge-colored graph with no majority 3-partition}\label{sec:no3part}

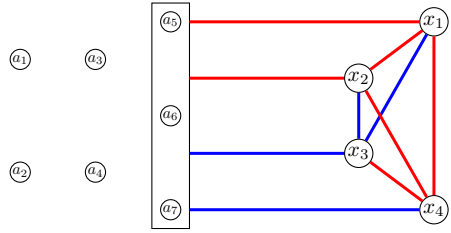
\begin{figure}[th]
  \begin{center}

\tikzstyle{vertexL}=[circle,draw, minimum size=8pt, scale=0.6, inner sep=0.5pt]
\tikzstyle{vertexB}=[circle,draw, minimum size=10pt, scale=0.8, inner sep=0.6pt]
\tikzstyle{vertexR}=[circle,draw, color=red!100, minimum size=14pt, scale=0.6, inner sep=0.5pt]
\begin{tikzpicture}[scale=0.5]

  \node (a1) at (1,5) [vertexL]{$a_1$};
  \node (a2) at (1,2) [vertexL]{$a_2$};
  \node (a3) at (3,5) [vertexL]{$a_3$};
  \node (a4) at (3,2) [vertexL]{$a_4$};
  \node (a5) at (5,6) [vertexL]{$a_5$};
  \node (a6) at (5,3.5) [vertexL]{$a_6$};
  \node (a7) at (5,1) [vertexL]{$a_7$};
  \draw (4.5,0.5) rectangle (5.5,6.5);

  \node (x1) at (12,6) [vertexB]{$x_1$};
  \node (x2) at (10,4.5) [vertexB]{$x_2$};
  \node (x3) at (10,2.5) [vertexB]{$x_3$};
  \node (x4) at (12,1) [vertexB]{$x_4$};

\withCol{{\color{blue}
  \draw[line width=0.04cm] (5.5,2.5) to (x3);
  \draw[line width=0.04cm] (5.5,1) to (x4);
  \draw[line width=0.04cm] (x1) to (x3);
  \draw[line width=0.04cm] (x2) to (x3);
}}

\noCol{
  \draw[line width=0.07cm] (5.5,2.5) to (x3);
  \draw[line width=0.07cm] (5.5,1) to (x4);
  \draw[line width=0.07cm] (x1) to (x3);
  \draw[line width=0.07cm] (x2) to (x3);
}

\withCol{{\color{red}
  \draw[line width=0.04cm] (5.5,6) to (x1);
  \draw[line width=0.04cm] (5.5,4.5) to (x2);
  \draw[line width=0.04cm] (x1) to (x2);
  \draw[line width=0.04cm] (x1) to (x4);
  \draw[line width=0.04cm] (x2) to (x4);
  \draw[line width=0.04cm] (x3) to (x4);
}}

\noCol{
  \draw[line width=0.02cm] (5.5,6) to (x1);
  \draw[line width=0.02cm] (5.5,4.5) to (x2);
  \draw[line width=0.02cm] (x1) to (x2);
  \draw[line width=0.02cm] (x1) to (x4);
  \draw[line width=0.02cm] (x2) to (x4);
  \draw[line width=0.02cm] (x3) to (x4);
}

   \end{tikzpicture} 
\caption{The gadget $H_{5,6,7}$ and its connection to $\{a_5,a_6,a_7\}$.} \label{fig:gadgetH}
\end{center} \end{figure}

We will now construct a $2$-edge-colored graph, $G$, which does not have a majority partition with 3 sets.
We start by letting $V(G)=\{a_1,a_2,\ldots,a_7\}$.  For all integers $i$, $j$ and $k$, which satisfies $1 \leq i < j < k \leq 7$, 
we will add a gadget $H_{i,j,k}$ defined as follows. 

Let $V(H_{i,j,k})=\{x_1,x_2,x_3,x_4\}$ and let $E(H_{i,j,k})=E_1 \cup E_2$, 
where $E_1=\{x_1x_2,x_4x_1,x_4x_2,x_4x_3\}$ and   $E_2=\{x_3x_1,x_3x_2\}$.
  Let all edges in $E_1$ have color 1 (illustrated by \noCol{thin}\withCol{red} edges in Figure~\ref{fig:gadgetH}) and
let edges in $E_2$ have color 2 (illustrated by \noCol{thick}\withCol{blue} edges in Figure~\ref{fig:gadgetH}).
Finally, add all edges from $\{x_1,x_2\}$ to $\{a_i,a_j,a_k\}$ of color 1 (\noCol{thin}\withCol{red}) and
all edges from $\{x_3,x_4\}$ to $\{a_i,a_j,a_k\}$ of color 2 (\noCol{thick}\withCol{blue}).
Once we have added $H_{i,j,k}$ in the above manner for all $i$, $j$ and $k$ ($1 \leq i < j < k \leq 7$) we have our $G$,
which we shall show has no majority partition with 3 sets.

Assume for the sake of contradiction that there is a partition $(X_1,X_2,X_3)$ of $V(G)$ such that every vertex in $X_i$ has at least as many edges
to $V(G)\setminus X_i$ as it does to $X_i$ for each color and for all $i=1,2,3$. 
If $x \in X_i$ we say that $x$ gets label $i$ and indicate this by $l(x)=i$.
We will now prove the following claims, where we call edges of color 1 for \noCol{thin}\withCol{red} edges and edges of color
2 for \noCol{thick}\withCol{blue} edges (so it is easier to refer to Figure~\ref{fig:gadgetH}). Note that Claim~C implies 
a contradiction to $G$ having a majority partition with 3 sets, which completes our proof.

\2

{\bf Claim A:} We may without loss of generality assume that $l(a_5)=l(a_6)=l(a_7)$.

{\bf Proof of Claim A:} As there are seven vertices in $\{a_1,a_2,\ldots,a_7\}$ three of the vertices must have the same label. 
By renaming the vertices in this set we may without loss of generality assume that $l(a_5)=l(a_6)=l(a_7)$.\\

By Claim~A, we may without loss of generality assume that $l(a_5)=l(a_6)=l(a_7)=3$ (by possibly renaming the labels). Below we consider the subgraph $H_{5,6,7}$ of $G$ and consider the vertices $x_1,x_2,x_3,x_4$ of that subgraph.

\2

{\bf Claim B:} We may without loss of generality assume that $l(x_1),l(x_2),l(x_3),l(x_4) \in \{1,2\}$.

{\bf Proof of Claim B:}
 Due to the \noCol{thin}\withCol{red} edges incident with $x_1$, we note that $l(x_1) \not= 3$.
 Due to the \noCol{thin}\withCol{red} edges incident with $x_2$, we note that $l(x_2) \not= 3$.
 Due to the \noCol{thick}\withCol{blue} edges incident with $x_3$, we note that $l(x_3) \not= 3$.
 Due to the \noCol{thick}\withCol{blue} edges incident with $x_4$, we note that $l(x_4) \not= 3$.
 This completes the proof of Claim~B.

\2

{\bf Claim C:} We obtain a contradiction.

{\bf Proof of Claim C:} By Claim~B we note that $l(x_3)\in \{1,2\}$. Without loss of generality assume that $l(x_3)=1$ 

Due to Claim~C and the \noCol{thick}\withCol{blue} edges incident with $x_1$, we note that $l(x_1) = 2$.
Due to Claim~C and the \noCol{thick}\withCol{blue} edges incident with $x_2$, we note that $l(x_2) = 2$.
Due to Claim~C and the \noCol{thin}\withCol{red} edges incident with $x_3$, we note that $l(x_4) = 2$.
So $l(x_1) = l(x_2) = l(x_4) = 2$. However now $x_4$ has two \noCol{thin}\withCol{red} edges into $X_2$ but only 
one to $V(G) \setminus X_2$, a contradiction.\\

\section{NP-hardness of majority 3-partition for 2-edge-colored graphs}\label{sec:NPC3part}

\begin{theorem}
    It is NP-complete to decide whether a given 2-edge-colored graph $G$ has a majority 3-partition.
  \end{theorem}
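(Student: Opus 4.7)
The plan is to reduce from the NP-hard problem of deciding whether a given 3-uniform hypergraph $\mathcal{H}=(V,E)$ admits a 3-coloring with no monochromatic edge. Given $\mathcal{H}$, we build the 2-edge-colored graph $G$ by starting with vertex set $V$ and, for each hyperedge $e=\{u,v,w\}\in E$, attaching a fresh copy $H_e$ of the gadget from Section~\ref{sec:no3part}, identifying the three outer attachment points of $H_e$ with $u,v,w$ respectively (in the role of $a_i,a_j,a_k$). No other edges are added, so the construction is polynomial in the size of $\mathcal{H}$.

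For the ``only if'' direction, the argument of Section~\ref{sec:no3part} applies locally: if some majority $3$-partition of $G$ places $\{u,v,w\}$ in a single class, then Claims~A, B, and~C of that section (with $a_5,a_6,a_7$ replaced by $u,v,w$) go through verbatim inside $H_e$ and produce a contradiction. Hence the restriction of the partition to $V$ is a non-monochromatic $3$-coloring of $\mathcal{H}$.

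For the ``if'' direction, starting from a non-monochromatic $3$-coloring $\chi:V\to\{1,2,3\}$, we extend it to a majority $3$-partition of $G$ by assigning colors to the four helper vertices $x_1^e,\dots,x_4^e$ of each $H_e$ so that (a) the majority condition holds locally at every $x_k^e$, and (b) within $H_e$, at most one of $\{x_1^e,x_2^e\}$ lies in each of the color classes $\chi(u),\chi(v),\chi(w)$, and similarly at most one of $\{x_3^e,x_4^e\}$ lies in each of these classes. Condition (b) forces $H_e$ to be ``non-adversarial'' at $u,v,w$: it contributes at most one red (resp.\ blue) edge from $H_e$ into the class of each of $u,v,w$ and at least one out. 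Since every edge of $G$ incident with a vertex of $V$ belongs to some such gadget, majority at each $v\in V$ in both colors follows by summing the non-adversarial contributions over all hyperedges containing $v$.

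The main obstacle is exhibiting concrete helper assignments satisfying (a) and (b) for every non-monochromatic pattern $(\chi(u),\chi(v),\chi(w))$. Up to permutation of the color names there are just two patterns: the ``rainbow'' $(1,2,3)$ and the ``split'' $(1,1,2)$. A direct check shows that $(x_1^e,x_2^e,x_3^e,x_4^e)=(1,3,2,3)$ handles the rainbow case and $(2,3,1,3)$ handles the split case; in both, the six local majority inequalities reduce to counting low-degree neighbors in each color class (recall that in $H_e$ the red/blue degrees of $x_1^e,x_2^e,x_3^e,x_4^e$ are $5/1$, $5/1$, $1/5$, and $3/3$ respectively), and condition~(b) is immediate from the assignment. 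Membership in NP is clear, since a candidate $3$-partition is a polynomial-size certificate whose majority property can be verified in polynomial time, which completes the plan.
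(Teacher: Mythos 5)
Your proposal is correct and follows essentially the same route as the paper: the same reduction from $3$-colorability of $3$-uniform hypergraphs, the same gadget $H_{i,j,k}$ attached per hyperedge, the impossibility argument of Section~\ref{sec:no3part} for the monochromatic case, and an explicit two-case (rainbow/split) extension of the coloring into each gadget for the converse. Your concrete helper assignments $(1,3,2,3)$ and $(2,3,1,3)$ check out (they differ from the paper's choice only by a relabeling), and your ``non-adversarial contribution'' bookkeeping at the original vertices is the same verification the paper performs implicitly.
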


  \begin{proof}
We will reduce from the problem of deciding if a $3$-uniform hypergraph is $3$-colorable (ie can we $3$-color the vertex set of a $3$-uniform hypergraph 
such that no edge becomes monochromatic). This problem is known to be NP-hard \cite{khotFOCS43}.   So let $F$ be a $3$-uniform hypergraph with 
$V(F)=\{v_1,v_2,\ldots,v_n\}$. We will now constuct a $2$-edge-colored graph $G$, such that $G$ has a majority partition with three sets if and
only if $F$ is $3$-colorable.

Initially let $V(G)=\{a_1,a_2,\ldots,a_n\}$.  For every edge $\{v_i,v_j,v_k\}$ ($1 \leq i < j < k \leq n$) in $F$ 
add the gadget $H_{i,j,k}$ which is defined above.

If $F$ is not $3$-colorable then any $3$-coloring of $V(F)$ will result in a monochromatic edge $\{v_i,v_j,v_k\}$. 
This is equivalent to saying that any assignment of labels from $\{1,2,3\}$ to $V(G)$ will result is $l(a_i)=l(a_j)=l(a_k)$ 
for some $i$, $j$ and $k$, where
we added the gadget $H_{i,j,k}$ to $G$. By the proof in the previous section this labeling of $a_i$, $a_j$ and $a_k$ cannot be extended
to $V(H_{i,j,k})$ without obtaining a vertex in $H_{i,j,k}$ which contradicts the majority-condition.
So if $F$ is not $3$-colorable then $G$ has no majority partition with three sets.

Conversely assume that $F$ is $3$-colorable and let $c$ be a proper coloring of $F$. 
Let $l(a_i)=c(v_i)$ for all $i=1,2,\ldots, n$. We will now show how to extend the labels to the vertices of
each $H_{i,j,k}$ that has been added. Recall that, by construction, these graphs can only share vertices from $\{a_1,a_2,\ldots{},a_n\}$.

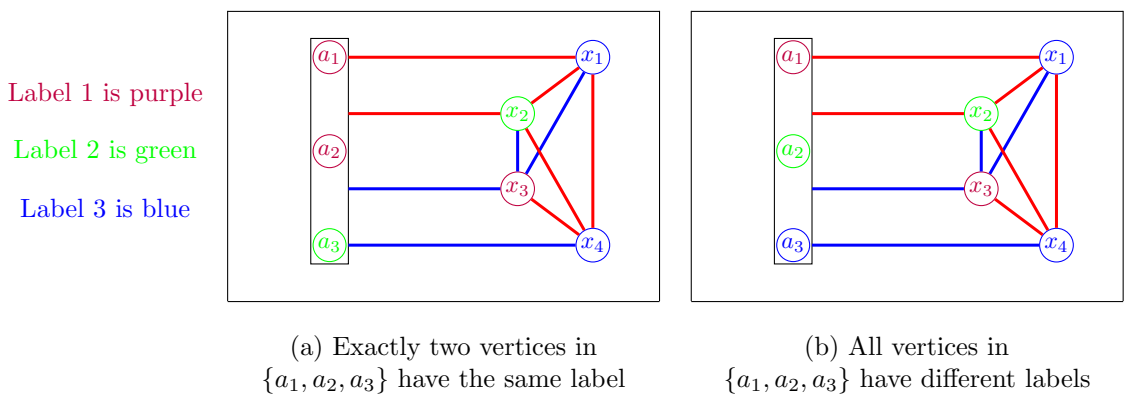
\begin{figure}[H]
  \begin{center}
\begin{tabular}{cccc} \cline{2-2} \cline{4-4} 
\begin{tikzpicture}[scale=0.5]
\node at (1,0) {\mbox{ }};
\node at (1,7) {\mbox{ }};
{\color{purple}  \node at (1,5) {Label 1 is purple};}
{\color{green}  \node at (1,3.5) {Label 2 is green};}
{\color{blue}  \node at (1,2) {Label 3 is blue};}
   \end{tikzpicture} & \multicolumn{1}{|c|}{
\tikzstyle{vertexBi}=[color=purple,circle,draw, minimum size=10pt, scale=0.9, inner sep=0.9pt]
\tikzstyle{vertexBii}=[color=green,circle,draw, minimum size=10pt, scale=0.9, inner sep=0.9pt]
\tikzstyle{vertexBiii}=[color=blue,circle,draw, minimum size=10pt, scale=0.9, inner sep=0.9pt]
\tikzstyle{vertexB}=[circle,draw, minimum size=10pt, scale=0.8, inner sep=0.6pt]
\begin{tikzpicture}[scale=0.5]
\node at (4,0) {\mbox{ }};
\node at (13,7) {\mbox{ }};
  \node (a1) at (5,6) [vertexBi]{$a_1$};
  \node (a2) at (5,3.5) [vertexBi]{$a_2$};
  \node (a3) at (5,1) [vertexBii]{$a_3$};
  \draw (4.5,0.5) rectangle (5.5,6.5);

  \node (x1) at (12,6) [vertexBiii]{$x_1$};
  \node (x2) at (10,4.5) [vertexBii]{$x_2$};
  \node (x3) at (10,2.5) [vertexBi]{$x_3$};
  \node (x4) at (12,1) [vertexBiii]{$x_4$};

\withCol{{\color{blue}
  \draw[line width=0.04cm] (5.5,2.5) to (x3);
  \draw[line width=0.04cm] (5.5,1) to (x4);
  \draw[line width=0.04cm] (x1) to (x3);
  \draw[line width=0.04cm] (x2) to (x3);
}}

\noCol{
  \draw[line width=0.07cm] (5.5,2.5) to (x3);
  \draw[line width=0.07cm] (5.5,1) to (x4);
  \draw[line width=0.07cm] (x1) to (x3);
  \draw[line width=0.07cm] (x2) to (x3);
}

\withCol{{\color{red}
  \draw[line width=0.04cm] (5.5,6) to (x1);
  \draw[line width=0.04cm] (5.5,4.5) to (x2);
  \draw[line width=0.04cm] (x1) to (x2);
  \draw[line width=0.04cm] (x1) to (x4);
  \draw[line width=0.04cm] (x2) to (x4);
  \draw[line width=0.04cm] (x3) to (x4);
}}

\noCol{
  \draw[line width=0.02cm] (5.5,6) to (x1);
  \draw[line width=0.02cm] (5.5,4.5) to (x2);
  \draw[line width=0.02cm] (x1) to (x2);
  \draw[line width=0.02cm] (x1) to (x4);
  \draw[line width=0.02cm] (x2) to (x4);
  \draw[line width=0.02cm] (x3) to (x4);
}
   \end{tikzpicture} }
& & \multicolumn{1}{|c|}{
\tikzstyle{vertexBi}=[color=purple,circle,draw, minimum size=10pt, scale=0.9, inner sep=0.9pt]
\tikzstyle{vertexBii}=[color=green,circle,draw, minimum size=10pt, scale=0.9, inner sep=0.9pt]
\tikzstyle{vertexBiii}=[color=blue,circle,draw, minimum size=10pt, scale=0.9, inner sep=0.9pt]
\tikzstyle{vertexB}=[circle,draw, minimum size=10pt, scale=0.8, inner sep=0.6pt]
\begin{tikzpicture}[scale=0.5]
\node at (4,0) {\mbox{ }};
\node at (13,7) {\mbox{ }};
  \node (a1) at (5,6) [vertexBi]{$a_1$};
  \node (a2) at (5,3.5) [vertexBii]{$a_2$};
  \node (a3) at (5,1) [vertexBiii]{$a_3$};
  \draw (4.5,0.5) rectangle (5.5,6.5);

  \node (x1) at (12,6) [vertexBiii]{$x_1$};
  \node (x2) at (10,4.5) [vertexBii]{$x_2$};
  \node (x3) at (10,2.5) [vertexBi]{$x_3$};
  \node (x4) at (12,1) [vertexBiii]{$x_4$};

\withCol{{\color{blue}
  \draw[line width=0.04cm] (5.5,2.5) to (x3);
  \draw[line width=0.04cm] (5.5,1) to (x4);
  \draw[line width=0.04cm] (x1) to (x3);
  \draw[line width=0.04cm] (x2) to (x3);
}}

\noCol{
  \draw[line width=0.07cm] (5.5,2.5) to (x3);
  \draw[line width=0.07cm] (5.5,1) to (x4);
  \draw[line width=0.07cm] (x1) to (x3);
  \draw[line width=0.07cm] (x2) to (x3);
}

\withCol{{\color{red}
  \draw[line width=0.04cm] (5.5,6) to (x1);
  \draw[line width=0.04cm] (5.5,4.5) to (x2);
  \draw[line width=0.04cm] (x1) to (x2);
  \draw[line width=0.04cm] (x1) to (x4);
  \draw[line width=0.04cm] (x2) to (x4);
  \draw[line width=0.04cm] (x3) to (x4);
}}

\noCol{
  \draw[line width=0.02cm] (5.5,6) to (x1);
  \draw[line width=0.02cm] (5.5,4.5) to (x2);
  \draw[line width=0.02cm] (x1) to (x2);
  \draw[line width=0.02cm] (x1) to (x4);
  \draw[line width=0.02cm] (x2) to (x4);
  \draw[line width=0.02cm] (x3) to (x4);
}
   \end{tikzpicture} } \\ \cline{2-2} \cline{4-4}
 & & & \\
 &  (a) Exactly two vertices in &  & (b) All vertices in \\
 & $\{a_1,a_2,a_3\}$ have the same label & & $\{a_1,a_2,a_3\}$ have different labels \\
 & & & \\
\end{tabular}
\caption{The gadget $H_{1,2,3}$ and proper $3$-partitions when $\{a_1,a_2,a_3\}$ are not all of the same color.} \label{fig:colorings}
\end{center} \end{figure}

Without loss of generality assume that $H_{1,2,3}$ has been added to $G$, which implies that $\{v_1,v_2,v_3\}$ was an edge in $F$
and therefore we do not have $l(a_1)=l(a_2)=l(a_3)$. We consider the cases when $\{l(a_1),l(a_2),l(a_3)\}=\{1,2,3\}$ and when this is not
the case seperately.

Assume that $\{l(a_1),l(a_2),l(a_3)\}=\{1,2,3\}$. In this case we may, without loss of generality, assume that $l(a_1)=1$, $l(a_2)=2$ and
$l(a_3)=3$. Now let $l(x_1)=3$, $l(x_2)=2$, $l(x_3)=1$ and $l(x_4)=3$. See Figure~\ref{fig:colorings}(b).

Assume that $\{l(a_1),l(a_2),l(a_3)\} \not= \{1,2,3\}$. In this case we may, without loss of generality, assume that $l(a_1)=1$, $l(a_2)=1$ and
$l(a_3)=2$. Again let $l(x_1)=3$, $l(x_2)=2$, $l(x_3)=1$ and $l(x_4)=3$. See Figure~\ref{fig:colorings}(a).

If we extend the labelings as indicated above for every $H_{i,j,k}$, then we obtain a majority partition of $V(G)$ with three sets, as desired.
So if $F$ is $3$-colorable then $G$ has a majority partition with three sets, which completes the proof.

\end{proof}

\section{Probabilistic results}
\label{sec:proba}

\subsection{Probability tools}

We begin by stating the symmetric version of the Lov\' asz Local Lemma, on which we rely for our results \JBJ{in this section}.
\begin{lemma}[Symmetric Lov\' asz Local Lemma]
\label{lem:LLL}
Consider a set $\cE=\{E_1,\dots,E_n\}$ of random (bad) events such that each $E_i$ is mutually independent of $\cE-(\cD_i\cup \{E_i\})$, for some $\cD_i \subseteq \cE$.
If there exists a real value $p\in [0,1]$ and some $D \ge 0$ such that $\pr{E_i}\le p$ and $|\cD_i|\le D$ for every $i\in [n]$, and if 
\[
epD \le 1,
\]
then the probability that none of the events in $\cE$ occur is positive.
\end{lemma}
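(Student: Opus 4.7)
The plan is to prove the lemma by the standard induction used to establish the Lov\'asz Local Lemma. I would prove the strengthened inductive claim that for every $E_i\in \cE$ and every $S\subseteq \cE\setminus\{E_i\}$,
\[
\pr{E_i\;\bigg|\;\bigcap_{E_j\in S}\overline{E_j}}\leq ep,
\]
by induction on $|S|$, where by convention the conditional probability reduces to $\pr{E_i}$ when $S=\emptyset$. Once this is proved, the conclusion $\pr{\bigcap_i\overline{E_i}}>0$ follows from the chain rule
\[
\pr{\bigcap_{i=1}^n \overline{E_i}} = \prod_{i=1}^n \pr{\overline{E_i}\;\bigg|\;\bigcap_{j<i}\overline{E_j}} \geq (1-ep)^n,
\]
which is strictly positive since the hypothesis $ep\leq epD\leq 1$ gives $ep\leq 1$ (and strict inequality is recovered by a standard limiting argument, or by noting that the degenerate case is trivial).

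The base case $|S|=0$ is immediate from $\pr{E_i}\leq p\leq ep$. For the inductive step, I would split the conditioning along the dependency structure of $E_i$: let $S_1:=S\cap\cD_i$ and $S_2:=S\setminus\cD_i$, and decompose
\[
\pr{E_i\;\bigg|\;\bigcap_{E_j\in S}\overline{E_j}} = \frac{\pr{E_i\cap \bigcap_{E_j\in S_1}\overline{E_j}\;\bigg|\;\bigcap_{E_k\in S_2}\overline{E_k}}}{\pr{\bigcap_{E_j\in S_1}\overline{E_j}\;\bigg|\;\bigcap_{E_k\in S_2}\overline{E_k}}}.
\]
The numerator I would bound above by $\pr{E_i\mid\bigcap_{E_k\in S_2}\overline{E_k}} = \pr{E_i}\leq p$, using the mutual independence of $E_i$ from $\{E_k : E_k\in S_2\}$ (guaranteed since $S_2\cap\cD_i=\emptyset$). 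The denominator I would bound below by enumerating $S_1=\{E_{j_1},\ldots,E_{j_r}\}$ with $r\leq|\cD_i|\leq D$, factoring it via the chain rule as a product of $r$ conditional probabilities of $\overline{E_{j_t}}$ (each conditioning on a strictly smaller set than $S$), and applying the inductive hypothesis to each factor to obtain a lower bound of $(1-ep)^r\geq (1-ep)^D$. Combining these with the elementary estimate $(1-1/(D+1))^D\geq 1/e$ and the hypothesis $epD\leq 1$ yields $\pr{E_i\mid\bigcap_S\overline{E_j}}\leq ep$, closing the induction.

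The main obstacle I expect is nothing deep but rather careful bookkeeping of two items. First, the equality $\pr{E_i\mid\bigcap_{E_k\in S_2}\overline{E_k}} = \pr{E_i}$ requires \emph{joint} mutual independence of $E_i$ from the collection $\{E_k : E_k\in S_2\}$, not merely pairwise independence; this is exactly what the hypothesis ``$E_i$ is mutually independent of $\cE-(\cD_i\cup\{E_i\})$'' provides, and it is worth articulating explicitly. Second, the precise matching of constants (for instance whether the exponent in the lower bound on the denominator is $D$ or $D-1$) depends on the convention for what $\cD_i$ includes; a small adjustment to the inductive threshold (such as using $1/D$ in place of $ep$) handles either convention without changing the structure of the argument.
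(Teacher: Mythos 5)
The paper does not actually prove Lemma~\ref{lem:LLL}; it is stated as a standard tool and used as a black box, so there is no in-paper argument to compare against. Your outline is the canonical inductive proof of the symmetric Local Lemma, and its structure (inductive bound on $\pr{E_i \mid \bigcap_{E_j\in S}\overline{E_j}}$, numerator bounded via mutual independence from $S_2$, denominator factored by the chain rule and bounded by the inductive hypothesis) is sound. The problem is the one you flag but do not resolve: the constants do not close under the hypothesis $epD\le 1$ as stated. Whatever inductive threshold $t$ you choose, your argument needs $p/(1-t)^{D}\le t$, i.e.\ $p\le t(1-t)^D$, and the maximum of $t(1-t)^D$ over $t\in(0,1)$ is $D^D/(D+1)^{D+1}$, attained at $t=1/(D+1)$. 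Since $(1+1/D)^{D+1}>e$ for every $D\ge 1$, this maximum is \emph{strictly less} than $1/(eD)$, so no choice of threshold makes the induction go through when $epD$ is close to $1$ (which is exactly the regime the paper uses: in Theorem~5.5 it takes $D=2k\Delta^2$ and $p=1/(2ek\Delta^2)$, so $epD=1$). Your two suggested repairs fail concretely: with threshold $ep$ you need $(1-ep)^D\ge 1/e$, which is false at $ep=1/D$; with threshold $1/D$ you need $(1-1/D)^D\ge 1/e$, which is false for every finite $D$. What your induction honestly proves is the variant with hypothesis $ep(D+1)\le 1$.

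The statement with $epD\le 1$ is nevertheless true for $D\ge 1$ (with $p<1$), but it does not follow from this induction, nor from Lemma~\ref{lem:GLL} by the standard substitution; it requires Shearer's tight analysis, which gives positive probability whenever $p\le (D-1)^{D-1}/D^D$, a quantity that is at least $1/(eD)$ because $(1-1/D)^{D-1}\ge 1/e$. So either cite Shearer (a genuinely different and substantially harder argument), or prove the $ep(D+1)\le 1$ version and check that the application still works (here it does, since $D=2k\Delta^2$ is a generous overcount and one can absorb the $+1$). Two further small points: the lemma as literally stated is false in the degenerate case $D=0$, $p=1$ (the hypothesis $epD\le 1$ is vacuous but an almost-sure bad event kills the conclusion), and your final chain-rule step $\pr{\bigcap_i\overline{E_i}}\ge (1-t)^n$ only yields \emph{strict} positivity when the inductive threshold $t$ is strictly below $1$, which again forces you to track the constants carefully rather than wave at a limiting argument.
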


We will also use the general form of Lov\' asz Local Lemma to obtain more precise bounds.

\begin{lemma}[General Local Lemma]
\label{lem:GLL}
Consider a set $\cE=\{E_1,\dots,E_n\}$ of random (bad) events such that each $E_i$ is mutually independent of $\cE-(\cD_i\cup \{E_i\})$, for some $\cD_i \subseteq \cE$. If there exist reals $0 < x_1, \ldots, x_n < 1$ such that for each $i$,
\begin{align*}
 \pr{E_i} \le x_i \prod_{j \in \cD_i} (1-x_j),
 \end{align*}
then the probability that none of the events in
$\cE$ occur is positive.
\end{lemma}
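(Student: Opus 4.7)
The plan is to prove, by induction on $|S|$, the following stronger claim: for every $i\in[n]$ and every $S\subseteq[n]\setminus\{i\}$ with $\mathbb{P}[\bigcap_{j\in S}\overline{E_j}]>0$, one has $\mathbb{P}[E_i\mid \bigcap_{j\in S}\overline{E_j}] \le x_i$. Granting this, the lemma follows immediately by the chain rule applied to the whole family:
\[ \mathbb{P}\!\left[\bigcap_{i=1}^{n}\overline{E_i}\right] = \prod_{i=1}^{n}\mathbb{P}\!\left[\overline{E_i}\,\middle|\,\bigcap_{j<i}\overline{E_j}\right] \ge \prod_{i=1}^{n}(1-x_i) > 0. \]

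For the base case $S=\emptyset$ the hypothesis directly yields $\mathbb{P}[E_i] \le x_i\prod_{j\in\cD_i}(1-x_j)\le x_i$. For the inductive step, partition $S$ as $S_1=S\cap\cD_i$ and $S_2=S\setminus\cD_i$. If $S_1=\emptyset$, then $E_i$ is mutually independent of $\{E_j:j\in S_2\}$, so $\mathbb{P}[E_i\mid \bigcap_{j\in S_2}\overline{E_j}]=\mathbb{P}[E_i]\le x_i$. Otherwise, by the definition of conditional probability,
\[ \mathbb{P}\!\left[E_i\,\middle|\,\bigcap_{j\in S}\overline{E_j}\right] = \frac{\mathbb{P}\!\left[E_i\cap\bigcap_{j\in S_1}\overline{E_j}\,\middle|\,\bigcap_{j\in S_2}\overline{E_j}\right]}{\mathbb{P}\!\left[\bigcap_{j\in S_1}\overline{E_j}\,\middle|\,\bigcap_{j\in S_2}\overline{E_j}\right]}. \]
I would upper-bound the numerator by dropping the $S_1$-constraint and invoking mutual independence, obtaining $\mathbb{P}[E_i\mid\bigcap_{j\in S_2}\overline{E_j}]=\mathbb{P}[E_i]\le x_i\prod_{j\in\cD_i}(1-x_j)$.

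For the denominator, enumerate $S_1=\{j_1,\dots,j_r\}$ and telescope,
\[ \mathbb{P}\!\left[\bigcap_{k=1}^{r}\overline{E_{j_k}}\,\middle|\,\bigcap_{j\in S_2}\overline{E_j}\right] = \prod_{k=1}^{r}\mathbb{P}\!\left[\overline{E_{j_k}}\,\middle|\,\bigcap_{\ell<k}\overline{E_{j_\ell}}\cap\bigcap_{j\in S_2}\overline{E_j}\right]. \]
Each factor conditions on strictly fewer than $|S|$ events, so the inductive hypothesis lower-bounds each by $1-x_{j_k}$, and the product is therefore at least $\prod_{j\in S_1}(1-x_j)\ge\prod_{j\in\cD_i}(1-x_j)$. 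Dividing the numerator bound by the denominator bound yields the desired $x_i$ and closes the induction.

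The main obstacle will be handling degenerate conditioning carefully: at each step of the denominator expansion one needs the conditioning event to have strictly positive probability, which is guaranteed recursively because the induction produces the strict lower bound $1-x_{j_k}>0$ for every factor along the way. A secondary subtlety is that ``mutual independence of $E_i$ from $\cE\setminus(\cD_i\cup\{E_i\})$'' must be interpreted in the strong sense that $E_i$ is independent of the full $\sigma$-algebra generated by those events, so that one may indeed drop the $S_2$-conditioning from $\mathbb{P}[E_i\mid\bigcap_{j\in S_2}\overline{E_j}]$; this is the standard reading, and so no new hypothesis is needed.
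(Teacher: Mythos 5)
Your proposal is the classical inductive proof of the general Lov\'asz Local Lemma (the Erd\H{o}s--Lov\'asz argument as presented, e.g., in Alon--Spencer), and it is correct: the induction on $|S|$ establishing $\pr{E_i \mid \bigcap_{j\in S}\overline{E_j}} \le x_i$, the split of $S$ into $S_1 = S\cap\cD_i$ and $S_2 = S\setminus\cD_i$, the independence bound on the numerator, the telescoped lower bound $\prod_{j\in S_1}(1-x_j)\ge\prod_{j\in\cD_i}(1-x_j)$ on the denominator, and the final chain-rule computation are all sound, and you correctly flag the two standard subtleties (positivity of the conditioning events, which follows recursively from the bound $1-x_j>0$, and the strong reading of mutual independence). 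The paper itself states this lemma as a known tool and gives no proof, so there is nothing to compare against; your argument is the standard one and fills that gap completely.
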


%

Finally, we need the following large concentration inequality on the sum of independent random variables given by the Hoeffding's inequalities.

\begin{lemma}[Hoeffding's inequality]
\label{lem:chernoff}
Let $X$ be \FP{the sum of $n$ independent random $0,1$-valued variables}, for some integer $n$. Then, for every $\sigma \ge 0$, 
\begin{align*}
\pr{X \le \esp{X}-\sigma} &\le e^{-\frac{2\sigma^2}{n}}.
\end{align*}
\end{lemma}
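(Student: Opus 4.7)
The plan is to follow the classical Chernoff--Hoeffding argument based on the exponential moment method. Writing $X = \sum_{i=1}^n X_i$ where each $X_i \in \{0,1\}$ is independent, and setting $Y_i = X_i - \mathbb{E}[X_i] \in [-\mathbb{E}[X_i], 1-\mathbb{E}[X_i]]$, I would first rewrite the tail event as
\[
\Pr[X \le \mathbb{E}[X] - \sigma] = \Pr\!\left[e^{-t(X-\mathbb{E}[X])} \ge e^{t\sigma}\right]
\]
for an arbitrary parameter $t > 0$, and then apply Markov's inequality to obtain the upper bound $e^{-t\sigma} \cdot \mathbb{E}[e^{-t(X-\mathbb{E}[X])}]$.

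The next step is to exploit the independence of the $X_i$'s to factor the moment generating function as $\mathbb{E}[e^{-t(X - \mathbb{E}[X])}] = \prod_{i=1}^n \mathbb{E}[e^{-tY_i}]$. To bound each individual factor I would invoke the standard Hoeffding lemma: if $Y$ is a centered random variable supported in an interval of length $\ell$, then $\mathbb{E}[e^{sY}] \le e^{s^2 \ell^2/8}$. Since $Y_i$ is supported in an interval of length exactly $1$, this gives $\mathbb{E}[e^{-tY_i}] \le e^{t^2/8}$, and therefore $\mathbb{E}[e^{-t(X-\mathbb{E}[X])}] \le e^{nt^2/8}$.

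Combining these two bounds yields $\Pr[X \le \mathbb{E}[X] - \sigma] \le \exp(nt^2/8 - t\sigma)$ for every $t > 0$. I would then optimize over $t$ by differentiating the exponent, which gives the minimizer $t = 4\sigma/n$; substituting back produces the claimed bound $\exp(-2\sigma^2/n)$.

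The only nontrivial input is the auxiliary Hoeffding lemma bounding the MGF of a bounded centered random variable, which is itself proved by convexity: $e^{sY}$ is convex in $Y$, so one bounds it above by the chord through the two endpoints of the support, takes expectations, and estimates the resulting function of $s$ by a second-order Taylor expansion of its logarithm. This is the one routine computation I would import as a black box rather than redoing; everything else in the plan is a direct application of Markov plus independence plus a one-variable optimization.
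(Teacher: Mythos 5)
Your proof is correct. The paper states this lemma as a standard tool (Hoeffding's inequality) and gives no proof of its own, so there is nothing to compare against; your argument is the classical one. For the record, the computation checks out: Markov applied to $e^{-t(X-\esp{X})}$ gives the bound $e^{-t\sigma}\prod_i \esp{e^{-tY_i}}$, Hoeffding's MGF lemma with support length $\ell=1$ gives $\esp{e^{-tY_i}}\le e^{t^2/8}$, and minimizing $nt^2/8-t\sigma$ at $t=4\sigma/n$ yields exactly $e^{-2\sigma^2/n}$. The only ingredient you import as a black box, the bound $\esp{e^{sY}}\le e^{s^2\ell^2/8}$ for a centered variable supported in an interval of length $\ell$, is itself standard and correctly described (chord bound by convexity plus a second-order estimate on the log-MGF), so the proof is complete as a self-contained derivation of the lemma the paper merely cites.
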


\subsection{Balanced 2-partitions}

\begin{theorem}
Let $k$ be a fixed integer.
Let $G$ be a graph of maximum degree $\Delta$, and let $c$ be a  (possibly improper) $k$-edge-coloring of $G$. For every color $i\in [k]$, we assume that every vertex is incident to at least $d_i$ edges colored $i$, for some integer $d_i$.
Then there exists a balanced bipartite subgraph $H$ of $G$ such that every vertex is incident to at least $d_i/2 - O(\sqrt{d_i \ln \Delta})$ edges of $H$ colored $i$ \JBJ{for each $i\in [k]$}.
\end{theorem}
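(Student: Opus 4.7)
My plan is to take $H = G[V_1, V_2]$ for a random balanced partition $(V_1, V_2)$ produced by the pairing trick, and to use the Lov\'asz Local Lemma to guarantee that all vertices and all colors get the required bound simultaneously.

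\textbf{Random model.} Pair up the vertices of $G$ arbitrarily, leaving at most one vertex unpaired. For each pair flip an independent fair coin: one endpoint lands in $V_1$ and the other in $V_2$. Any unpaired vertex is placed on the lighter side. By construction $||V_1|-|V_2||\le 1$, so $H=G[V_1,V_2]$ is balanced.

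\textbf{Concentration.} Fix a vertex $v$ and a color $i$, let $d_{v,i}\ge d_i$ be its color-$i$ degree and $X_{v,i}$ be the number of color-$i$ edges at $v$ crossing the partition. Group the color-$i$ neighbors of $v$ according to the pair they lie in: the pair containing $v$ itself and the pairs that contain two color-$i$ neighbors of $v$ contribute deterministically to $X_{v,i}$, while each pair containing exactly one color-$i$ neighbor of $v$ contributes an independent $\operatorname{Bernoulli}(1/2)$ variable. In particular $\esp{X_{v,i}}\ge d_{v,i}/2 \ge d_i/2$, and the deviation of $X_{v,i}$ from its mean equals that of its random part. Applying Hoeffding's inequality (Lemma~\ref{lem:chernoff}) to the random part with deviation $\sigma = C\sqrt{d_i\ln\Delta}$ for a constant $C$ to be chosen, a short case analysis gives
\[
\pr{X_{v,i} < \tfrac{d_i}{2} - C\sqrt{d_i\ln\Delta}} \;\le\; \Delta^{-3}.
\]
In the regime $d_{v,i}\le 2d_i$ the bound is $e^{-2\sigma^2/d_{v,i}}\le \Delta^{-C^2}$ directly. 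In the regime $d_{v,i}>2d_i$ the slack $\esp{X_{v,i}}-d_i/2$ already exceeds $d_{v,i}/4$, so Hoeffding yields $e^{-\Omega(d_{v,i})}$, which dominates whenever $d_{v,i}\ge \Omega(\ln \Delta)$. When $d_i = O(\ln \Delta)$ the target $d_i/2 - C\sqrt{d_i\ln\Delta}$ is nonpositive and the bound is vacuous.

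\textbf{Local Lemma.} Call $E_{v,i}$ the bad event above. It depends only on the coins of pairs meeting $\{v\}\cup N(v)$, so it is mutually independent of all $E_{u,j}$ for which no pair is shared between $\{u\}\cup N(u)$ and $\{v\}\cup N(v)$. Hence the dependency degree is $D = O(k\Delta^2)$. Since $k$ is fixed, choosing $C$ large enough gives $e\cdot \Delta^{-3}\cdot D \le 1$, so the Symmetric Lov\'asz Local Lemma (Lemma~\ref{lem:LLL}) produces an outcome avoiding every bad event, yielding the required balanced $H$.

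The principal obstacle is reconciling \emph{exact} balance of the partition with the independence needed for a Hoeffding-style bound; the pairing device handles this cleanly, but one must then track the few deterministic contributions it introduces and compare $d_{v,i}$ with $d_i$ in the concentration step. Everything else is a routine LLL application.
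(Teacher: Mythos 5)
Your proposal is correct and follows essentially the same route as the paper: a random pairing of the vertices with one endpoint of each pair on each side (the paper phrases this as a uniformly random proper $2$-coloring of an auxiliary perfect matching), separation of the deterministic contributions from the independent Bernoulli ones, Hoeffding's inequality, and the symmetric Local Lemma with dependency degree $O(k\Delta^2)$. The only cosmetic differences are in the bookkeeping of the concentration step (your two-case comparison of $d_{v,i}$ with $2d_i$ versus the paper's monotonicity argument) and in the treatment of odd $|V(G)|$ (you leave one vertex unpaired, the paper duplicates a vertex), neither of which changes the substance.
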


\begin{proof}
Let us first assume that $|V(G)|$ is even.
Let $M$ be a perfect matching on $V(G)$, which is not necessarily included in $E(G)$.
Let $\bphi$ be a uniformly random proper $2$-coloring of $M$, and let $\bH$ be the balanced bipartite subgraph of $G$ induced by the two color classes of $\bphi$.

Let $v\in V(G)$ be a fixed vertex. For every $i\in [k]$, we denote $N_i(v)$ the set of neighbours of $v$ in $G$ only using edges of color $i$ (that is, 
$N_i(v) = \{ u \; | \; uv \in E(G) \mbox{ and } c(uv)=i \}$).  We moreover let $\bD_i$ be the random variable that counts the number of vertices $u\in N_i(v)$ such that $\bphi(u) \neq \bphi(v)$.
For every $u\in N_i(v)$, if $uv\in M$, then by construction $u$ has a deterministic contribution of $+1$ to $\bD_i$.
We may therefore assume that we are in the worst case, i.e. the vertex matched with $v$ in $M$ is not one of its neighbours in $G$. 
Let $M_i[v]$ be the set of edges of $M$ with both endpoints in $N_i(v)$, of size $m_i(v) \coloneqq |M_i[v]|$. Let $\wN_i(v) \coloneqq N_i(v) \setminus V(M_i[v])$, and let $\wD_i$ count the number of vertices $u\in \wN_i(v)$ such that $\bphi(u) \neq \bphi(v)$. Then, since each edge $e\in M_i[v]$ covers both color classes of $\bphi$, we have
\[ \bD_i = m_i(v) + \wD_i.\] 

Let $\wn_i \coloneqq |\wN_i(v)|$ and note that by assumption we have $\wn_i \ge d_i - 2m_i(v)$. The variables $\bphi(u)$ over $u\in \wN_i(v)$ are independent, so $\wD_i$ follows the binomial distribution $\cB(\wn_i,1/2)$. So by Lemma~\ref{lem:chernoff}, for every  \AY{
$0 \le \sigma_i \le d_i/2$, the following holds (as $\wn_i/2 \ge d_i/2 - m_i(v)$),

\begin{align*}
\pr{\bD_i \le \frac{d_i}{2} - \sigma_i} & \leq   \pr{\wD_i \le \frac{d_i}{2} - \sigma_i - m_i(v)} \\ 
& \leq  \pr{\wD_i \le \frac{\wn_i}{2} - \left(\frac{\wn_i}{2} - \frac{d_i}{2} + \sigma_i + m_i(v) \right) } \\
& \leq   e^{-\frac{2(\wn_i/2 - d_i/2 + \sigma_i + m_i(v))^2}{\wn_i}}.
\end{align*}

We will now show that $e^{-\frac{2(\wn_i/2 - d_i/2 + \sigma_i + m_i(v))^2}{\wn_i}} \leq e^{-2\sigma_i^2 / d_i}$, which is equivalent to $f(\sigma_i) \geq 0$, where 
$f(\sigma_i) = \frac{(\wn_i/2 - d_i/2 + \sigma_i + m_i(v))^2}{\wn_i} - \frac{\sigma_i^2}{d_i}$. As $\wn_i/2 \ge d_i/2 - m_i(v)$ we note that this trivially holds
when $\wn_i \leq d_i$, so assume that $\wn_i > d_i$. Now $f(0) \geq 0$ and the following holds (as $\wn_i > d_i$),

\begin{align*}
f'(\sigma_i) &= \frac{2\left(\wn_i/2 - d_i/2 + \sigma_i + m_i(v) \right)}{\wn_i} - \frac{2 \sigma_i}{d_i} 
= \frac{2m_i(v)}{\wn_i} + \frac{2\sigma_i - d_i}{\wn_i} + 1 - \frac{2\sigma_i}{d_i} \\
& = \frac{2m_i(v)}{\wn_i} + (d_i - 2\sigma_i) \left(\frac{1}{d_i} - \frac{1}{\wn_i}\right)  > 0
\end{align*}

As 
$f(0) \geq 0$ and $f'(\sigma_i) \geq 0$ for all $0 \leq \sigma_i \leq d_i/2$, we infer that $f(\sigma_i) \geq 0$, and so we conclude that

\[ 
\pr{\bD_i \le \frac{d_i}{2} - \sigma_i}  \leq e^{-\frac{2(\wn_i/2 - d_i/2 + \sigma_i + m_i(v))^2}{\wn_i}}  \leq e^{- \frac{2\sigma_i^2}{d_i}}.
\]

}

Let us fix $\sigma_i = \sqrt{d_i(\AY{\ln (2ek)/2} + \ln \Delta)}$ for every $i\in [k]$.
We denote \JBJ{by} $E_i(v)$ the random (bad) even that $\bD_i \le d_i/2 - \sigma_i$. This bad event depends only on $\bphi(u)$ \AY{for $u$ where either $u \in N(v)$
or $u$ is matched to a vertex in $N(v)$ by $M$. So} in particular it is independent from any bad event $E_j(w)$ if $w$ \AY{has no path of length 1 or 2 to $v$ and also no path of length 3 where the middle edge is in $M$.}  So we may apply Lemma~\ref{lem:LLL} with 

\[D = \AY{2k\Delta^2} \quad \et \quad p = \min_{i\in [k]} e^{-2\sigma_i^2/d_i} = e^{-\AY{(\ln(2ek)} + 2\ln \Delta)} = \frac{1}{\AY{2}ek\Delta^2}.\]

We conclude that with positive probability, no bad event $E_i(v)$ occurs. So there is a realisation $H$ of $\bH$ such that every vertex is incident to at least $(d_i/2 - \sqrt{d_i(\AY{\ln (2ek)/2 + \ln \Delta)}} = d_i/2 - O(\sqrt{d_i\ln \Delta})$ edges of $H$ colored $i$.

We may now assume that $V(G)$ is odd. Let $G'$ be a graph obtained from $G$ by adding a copy $v'$ of an arbitrary vertex $v\in V(G)$. Then $G'$ has an even number of vertices and satisfies the required hypotheses; we consider a perfect matching $M$ on $V(G')$ such that $vv'\in M$, and we repeat the construction of the previous section. 
We end up with a balanced bipartite subgraph $H'$ of $G'$ such that every vertex is incident to at least $d_i/2 - \AY{\sqrt{d_i(\ln (2ek)/2 + \ln \Delta)}}$ edges of $H'$ colored $i$. Then $H \coloneqq H' \setminus \{v,v'\}$ is a balanced bipartite subgraph of $G$ such that every vertex is incident to 
at least $d_i/2 - 1 - \AY{\sqrt{d_i(\ln (2ek)/2 + \ln \Delta)}} = d_i/2 - O(\sqrt{d_i\ln \Delta})$ edges of $H$ colored $i$. This ends the proof.
\end{proof}

\subsection{Applications to balanced majority partitions}

\jbj{A $k$-partition $X_1,\ldots{},X_k$ of a set $X$ is {\bf balanced} if $||X_i|-|X_j||\leq 1$ for $i,j\in [k]$}

\begin{theorem}
\label{thm:balanced-3-partition}
Let $n$ be a multiple of $3$, and let $G$ be a 
$2$-edge-colored graph $G$ of minimum color-degree $\delta$ and maximum degree $\Delta \le \frac{e^{\delta/18}}{9\delta}$. Then $G$ has a \JBJ{balanced majority  $3$-partition}.
\end{theorem}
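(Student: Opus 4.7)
The plan is to sample a uniformly random balanced $3$-partition of $V(G)$ via a triple-decomposition construction and apply the Lov\'asz Local Lemma (Lemma~\ref{lem:LLL}) to forbid local violations of the majority condition. This directly generalises the perfect-matching-with-random-$2$-colouring construction used for the balanced bipartite theorem in the previous subsection.

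\textbf{Step 1 (random construction).} Since $3\mid n$, partition $V(G)$ arbitrarily into $n/3$ triples $T_1,\dots,T_{n/3}$ (unrelated to $E(G)$), and for each triple independently sample a uniformly random bijection $T_i\to\{1,2,3\}$. Let $l\colon V\to\{1,2,3\}$ be the resulting labelling and $X_\ell:=l^{-1}(\ell)$. Then $(X_1,X_2,X_3)$ is automatically balanced; two vertices in the same triple receive distinct labels deterministically; and for $u,v$ in different triples $l(u)$ and $l(v)$ are independent and uniform on $\{1,2,3\}$, so in particular $\pr{l(u)=l(v)}=1/3$.

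\textbf{Step 2 (concentration).} For each $v\in V(G)$ and each $c\in\{1,2\}$, let $E_c(v)$ be the bad event that $v$ has strictly more than $d_c(v)/2$ colour-$c$ neighbours in its own part $X_{l(v)}$. Conditioning on $l(v)=\ell$, the same-part count $\bD_c(v)$ equals $\sum_{T_j\neq T(v)} Y_j$, where $Y_j\in\{0,1\}$ indicates that the unique vertex of $T_j$ labelled $\ell$ lies in $N_c(v)$; the $Y_j$ are independent with $\esp{Y_j}=|T_j\cap N_c(v)|/3$, at most $d_c(v)$ of them are non-trivial, and $\esp{\bD_c(v)}\le d_c(v)/3$. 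Applying Hoeffding's inequality (Lemma~\ref{lem:chernoff}, in its upper-tail form) with deviation $d_c(v)/6$ gives
\[
\pr{E_c(v)} \le e^{-2(d_c(v)/6)^2/d_c(v)} = e^{-d_c(v)/18} \le e^{-\delta/18}.
\]

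\textbf{Step 3 (dependency).} The event $E_c(v)$ depends only on the permutations of the triples in $\Pi_{c,v}:=\{T(u): u\in\{v\}\cup N_c(v)\}$, so $E_c(v)$ is mutually independent of every $E_{c'}(w)$ satisfying $\Pi_{c,v}\cap\Pi_{c',w}=\emptyset$, i.e.\ such that no triple in $\Pi_{c,v}$ contains $w$ or a colour-$c'$ neighbour of $w$. A bookkeeping estimate over the $\le 3(d_c(v)+1)$ vertices in $\bigcup\Pi_{c,v}$ and their colour-$c'$ neighbours should yield a dependency bound of the form $|\mathcal{D}_{c,v}|\le 9\delta\Delta$ after absorbing lower-order terms using the theorem's hypothesis on $\Delta$.

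\textbf{Step 4 (LLL and conclusion).} Plugging $p=e^{-\delta/18}$ and $D=9\delta\Delta$ into the symmetric Lov\'asz Local Lemma (Lemma~\ref{lem:LLL}), the condition $e\,p\,D\le 1$ is equivalent to $\Delta\le e^{\delta/18}/(9\delta)$, which is exactly the hypothesis. Hence with positive probability no bad event $E_c(v)$ occurs, and the random labelling realises a balanced majority $3$-partition of $G$.

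\textbf{Main obstacle.} The delicate point is the dependency estimate: a naive count gives $|\mathcal{D}_{c,v}|=O(\Delta^2)$, which would only yield an exponent of $\delta/36$ rather than $\delta/18$. Matching the stated bound requires replacing one of the two $\Delta$ factors by a $\delta$ factor, which I would try to achieve by a refined accounting that separates the contribution of the triple $T(v)$ itself from those of the triples $T(u)$ with $u\in N_c(v)$, and exploits the minimum colour-degree $\delta$ to amortise dependencies; alternatively, the general Lov\'asz Local Lemma (Lemma~\ref{lem:GLL}) with weights $x_{c,v}$ tuned to $d_c(v)$ should be flexible enough to recover the same conclusion.
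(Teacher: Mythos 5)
Your Steps 1 and 2 coincide with the paper's proof: it too partitions $V(G)$ into triples, gives each triple an independent uniformly random bijection onto the three labels, splits the triples according to how many colour-$c$ neighbours of $v$ they contain, and applies Hoeffding to the non-deterministic part to obtain $\pr{E_c(v)}\le e^{-d_c(v)/18}$. That part of your argument is correct.

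The gap is in Steps 3--4. The bound $|\mathcal{D}_{c,v}|\le 9\delta\Delta$ is not justified and is false in general: $E_c(v)$ is determined by the permutations of the at most $d_c(v)+1$ triples meeting $\{v\}\cup N_c(v)$, and the events touching the $\le 3(d_c(v)+1)$ vertices of these triples number about $3d_c(v)(\Delta+2)$. Since the hypothesis permits $\Delta$ to be exponentially larger than $\delta$, a vertex with $d_c(v)=\Theta(\Delta)$ really has dependency degree $\Theta(\Delta^2)$, so no refined accounting can trade a $\Delta$ for a $\delta$, and the symmetric Local Lemma with the uniform bounds $p=e^{-\delta/18}$, $D=\Theta(\Delta^2)$ only reaches $\Delta\lesssim e^{\delta/36}$, exactly as you observe. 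The second remedy you flag is the one the paper uses: it applies the general Local Lemma (Lemma~\ref{lem:GLL}) with the weight $x_c(v)=e^{(1/\delta-1/18)d_c(v)}$ attached to $E_c(v)$, so that the failure probability $e^{-d_c(v)/18}$ and the dependency count $3d_c(v)\Delta$ both scale with $d_c(v)$ in the exponent and cancel. Concretely, one checks $1-x_{c'}(u)\ge e^{-3e^{-\delta/18}}$ for $\delta$ large enough (which the hypothesis forces, since $\delta\le\Delta\le e^{\delta/18}/(9\delta)$ gives $\delta\ge 237$), whence $\prod_{E_{c'}(u)\in\mathcal{D}_{c,v}}(1-x_{c'}(u))\ge e^{-9e^{-\delta/18}d_c(v)\Delta}\ge e^{-d_c(v)/\delta}=\pr{E_c(v)}/x_c(v)$, the last inequality being precisely $\Delta\le e^{\delta/18}/(9\delta)$. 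Supplying this choice of weights and this computation is what your proposal is missing; with it, your argument becomes the paper's proof.
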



\begin{proof}
First observe that we have $\delta \le \Delta \le \frac{e^{\delta/18}}{9\delta}$, which is possible only if $\delta \ge 237$; we assume this holds in what follows.

Let $c$ be the edge-coloring of $G$. Let $M$ be a perfect $3$-uniform matching on $V(G)$ (so a partition of $V(G)$ into triplets), and let $\bphi$ be a uniformly random rainbow $3$-coloring of $M$ (so each triplet of $M$ receives a permutation of the $3$ colors). We will show that with non-zero probability, the color classes of $\bphi$ form a \JBJ{balanced majority $3$-partition} of $G$.

Let $v\in V(G)$ be a fixed vertex. For every $i\in [2]$, we denote $N_i(v)$ the set of neighbours, \AY{$u$,} of $v$ in $G$ such that $c(uv)=i$. We moreover let $\bD_i$ be the random variable that counts the number of vertices $u\in N_i(v)$ such that $\bphi(u) \neq \bphi(v)$.
If there is a vertex matched to $v$ in $M$ that belongs to $N_i(v)$, then its contribution deterministically increases $\bD_i$ by $1$. We may therefore assume that we are in the worst case, and so that no vertex matched to $v$ in $M$ belongs to $N_i(v)$.

For \JBJ{$q\in [3]$}, we let \JBJ{$X_q$} be the set of \FP{hyperedges of $M$} with that intersect $N_i(v)$ in exactly \JBJ{$q$} vertices, of size \JBJ{$x_q \coloneqq |X_q|$}. \FP{By definition, we have $d_i(v) = 3x_3 + 2x_2 + x_1$.}
Let \JBJ{$N_i(v,q) \coloneqq N_i(v) \cap V(X_q)$}, 
and let \JBJ{$\bD_i(q)$} count the number of vertices \JBJ{$u\in N_i(v,q)$} such that $\bphi(u) \neq \bphi(v)$. 
Observe that \JBJ{$(N_i(v,q))_{q\in [3]}$} is a partition of $N_i(v)$, hence 
\begin{equation}
\label{eq:partition}
\bD_i = \bD_i(1) + \bD_i(2) + \bD_i(3).
\end{equation}
We deterministically have that \FP{$\bD_i(3) = 2x_3$}, and \FP{$\bD_i(2) = x_2 + Y$} where $Y$ is a random variable that follows the binomial distribution $\mathcal{B}(x_2,1/3)$. Finally, $\bD_i(1)$ follows the binomial distribution $\mathcal{B}(x_1,2/3)$. Using \eqref{eq:partition}, we infer that 
\begin{equation}
\label{eq:Di}
\FP{\bD_i = 2x_3 + x_2 + \wD_i,}
\end{equation}
where $\wD_i$ is \FP{the sum of $x_2$ variables following the Bernoulli distribution of parameter $1/3$ and $x_1$ variables following the Bernoulli distribution of parameter $2/3$. So we have $\esp{\wD_i}=x_2/3 + 2x_1/3$.}
We apply Lemma~\ref{lem:chernoff}, in order to obtain that for $t \le d_i(v)/2$,
\[ \pr{\bD_i \le t} = \pr{\wD_i \le t - 2x_3 - x_2} \le e^{-\frac{2\big(x_2/3+2x_1/3 - t + 2x_3 + x_2\big)^2}{x_1+x_2}} \le e^{-\frac{2(2d_i(v)/3-t)^2}{d_i(v)}} \le e^{-d_i(v)/18}.\]
We let $E_i(v)$ be the bad event that $\bD_i \le d_i/2$. 
We may now apply Lemma~\ref{lem:GLL} with the set of bad events $\{E_i(v) : i\in [3], v\in V(G)\}$, by associating to each bad event $E_i(v)$ the weight 
\[x_i(v) \coloneqq e^{\pth{\frac{1}{\delta}- \frac{1}{18}}d_i(v)}.\]
Using the well known inequality $\ln (1-z) \ge \frac{z}{1-z}$ for every $z<1$, we observe that, as soon as $\delta \ge 61$ (which is implied by our hypothesis, we have 
\begin{equation}
1 - x_i(v) \ge 1 - e^{1 - \frac{\delta}{18}} \ge \exp\pth{-{\frac{e}{1-e^{1 - \frac{\delta}{18}}}\, e^{- \frac{\delta}{18}}}}  \ge e^{-3 e^{-\frac{\delta}{18}}}.
\label{eq:lower-bound-proba}
\end{equation}
 
If we denote $M_{i,v} \subseteq M$ the set of edges that intersect $N_i(v)$, then each bad event $E_i(v)$ depends only on $\bphi(u)$ for $u\in V(M_{i,v})$. So it is independent from any bad event $E_j(u)$ if $V(M_{i,v})$ and $V(M_{j,u})$ do not intersect, i.e. if $u \notin N_j(V(M_{i,v}))$. We may therefore define $\cD_i(v)\coloneqq \{E_j(u) : j\in [3], u\in N_j(V(M_{i,v}))\} \setminus \{E_i(v)\}$, of size at most $3d_i(v)\Delta$. We have
\begin{align*}
\prod_{E_j(u) \in \cD_i(v)} \big(1-x_j(u)\big) &\ge 
e^{-3 e^{-\frac{\delta}{18}} \big|\cD_i(v)\big|} & \mbox{by \eqref{eq:lower-bound-proba}} \\
& \ge e^{-9 e^{-\frac{\delta}{18}} \, d_i(v)\Delta} \\
&\ge e^{\frac{d_i(v)}{\delta}} \ge \frac{\pr{E_i(v)}}{x_i(v)},
\end{align*}
therefore Lemma~\ref{lem:GLL} implies that there is a realisation of $\bphi$ that avoids every bad event $E_i(v)$. The conclusion follows.
\end{proof}

\begin{corollary}
let $G$ be a $2$-edge-colored graph of minimum color-degree $\delta$ and maximum degree $\Delta \le \frac{e^{\delta/18}}{9\delta}-2$. Then $G$ has a \JBJ{balanced majority  $3$-partition}.
\end{corollary}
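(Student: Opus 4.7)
The plan is to reduce the corollary to Theorem~\ref{thm:balanced-3-partition} by padding $V(G)$ so that its cardinality becomes a multiple of $3$. Let $r\in\{0,1,2\}$ be such that $n+r\equiv 0\pmod{3}$. If $r=0$, the theorem applies directly. Otherwise, pick an arbitrary vertex $v\in V(G)$ and form the $2$-edge-coloured graph $G'$ by adjoining $r$ new copies $v_1',\ldots,v_r'$ of $v$: each $v_i'$ is a new vertex with exactly the same coloured neighbourhood as $v$, and adjacent to neither $v$ nor any other copy. Then $|V(G')|=n+r$ is a multiple of $3$, every vertex of $G'$ still has colour-degree at least $\delta$, and the maximum degree of $G'$ is at most $\Delta+r\le\Delta+2\le\frac{e^{\delta/18}}{9\delta}$ by the hypothesis of the corollary; the ``$-2$'' in the hypothesis is precisely what accommodates the possible increase of the maximum degree by $2$ from adding the two copies. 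Theorem~\ref{thm:balanced-3-partition} therefore yields a balanced majority $3$-partition $(V_1',V_2',V_3')$ of $V(G')$.

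To ensure that the restriction to $V(G)$ is balanced, one arranges the perfect $3$-uniform matching $M$ used in the proof of Theorem~\ref{thm:balanced-3-partition} so that $v$ and all its copies lie in a single hyperedge: $\{v,v_1',v_2'\}$ when $r=2$, and $\{v,v_1',w\}$ for an arbitrary $w\in V(G)\setminus(N(v)\cup\{v\})$ when $r=1$. The rainbow $3$-colouring of $M$ then places $v$ and the copies $v_i'$ in pairwise distinct parts, so removing the $r$ copies shrinks $r$ of the three parts by exactly one vertex each, and the restricted partition $(V_1,V_2,V_3)$ of $V(G)$ has part sizes in $\{\lfloor n/3\rfloor,\lceil n/3\rceil\}$, i.e.\ is balanced.

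For the majority condition on $V(G)$, any $u\in V(G)\setminus N(v)$ has identical coloured neighbourhood in $G$ and $G'$, so its majority condition transfers verbatim from $G'$. For $u\in N(v)$, each copy $v_i'$ contributes exactly one edge of colour $c(uv)$ at $u$ in $G'$ that vanishes under restriction; here one invokes the strict inequality $\bD_i>d_i/2$ (equivalently $\bD_i\ge\lfloor d_i/2\rfloor+1$) furnished by the proof of Theorem~\ref{thm:balanced-3-partition}, together with the fact that the copies lie in parts different from $v$'s part, to recover $d_{c,V_i}^G(u)\le d_{c,V(G)\setminus V_i}^G(u)$. The delicate case I anticipate is $r=2$ with $u\in N(v)$ lying in the same part as $v$, where both copies sit outside $u$'s part so that the restriction discards two outside-edges of colour $c(uv)$ at $u$: a careful integer and parity bookkeeping is required, using that $d_{c(uv)}^{G'}(u)=d_{c(uv)}^G(u)+2$ has the same parity as $d_{c(uv)}^G(u)$ and that the strict inequality in $G'$ provides an integral unit of slack on the outside side, which is exactly what is needed to recover the (non-strict) majority condition in $G$ in the relevant parity regime.
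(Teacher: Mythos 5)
Your overall strategy is the same as the paper's: pad $V(G)$ with $r\in\{1,2\}$ copies of a vertex $v$ so that the order becomes a multiple of $3$, check that Theorem~\ref{thm:balanced-3-partition} still applies (the ``$-2$'' in the hypothesis absorbing the degree increase), and restrict the resulting partition back to $V(G)$. Your device of forcing $v$ and its copies into a single hyperedge of the $3$-uniform matching, so that the deleted copies come from distinct parts and balancedness survives the restriction, is a genuine improvement on the paper's write-up, which does not address balancedness after the deletion at all.

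However, the ``delicate case'' you flag is a real gap, and the parity bookkeeping you invoke does not close it. Write $c=c(uv)$ and $d=d_c(u)$ in $G$, so $d'=d+2$ in $G'$, and let $O'$ (resp.\ $I'=d'-O'$) be the number of colour-$c$ neighbours of $u$ outside (resp.\ inside) its part in $G'$. The proof of Theorem~\ref{thm:balanced-3-partition} guarantees only $\bD_c>d'/2$, i.e.\ $O'\ge\lfloor d'/2\rfloor+1$. When $u$ lies in the same part as $v$, both copies are outside-neighbours of $u$, so after deletion $O=O'-2$ and $I=I'$, and the majority condition $I\le O$ is equivalent to $O'\ge d'/2+1$. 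For $d'$ even this matches the guarantee exactly, but for $d'$ odd (equivalently $d$ odd) the guarantee yields only $O'\ge (d'+1)/2$, which is short by $1/2$: one can have $O=(d-1)/2$ and $I=(d+1)/2=O+1$, violating majority. Concretely, take $d=3$, so $d'=5$; the value $O'=3$ is permitted, and if the three outside colour-$c$ neighbours of $u$ are $v_1'$, $v_2'$ and one genuine vertex, then after deleting the copies $u$ has one outside and two inside colour-$c$ neighbours. So the parity of $d'$ works against you here rather than for you, and an extra idea is needed --- e.g.\ strengthening the bad events to $\bD_i\le d_i/2+1$ at a small cost in the constants, or a separate argument for the neighbours of $v$. (To be fair, the paper's own proof of this corollary elides the same difficulty: its estimate $\deg_H(u)\ge\deg_{H'}(u)-1$ is false when both deleted copies are outside-neighbours of $u$.)
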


\begin{proof}
If $|V(G)|$ is a multiple of $3$, the result is given directly by Theorem~\ref{thm:balanced-3-partition}. 

If $|V(G)|$ is not a multiple of $3$, let $G'$ be obtained from $G$ by adding $1$ or $2$ copies of an arbitrary vertex $v\in V(G)$ so that $V(G') \bmod 3=0$. Let $\Upsilon$ be the set containing $v$ and its copies in $G'$.
The minimum color-degree in $G'$ is still at least $\delta$ and $\Delta(G') \le \Delta(G)+2 \le e^{\delta/18}/(9\delta)$, so $G'$ satisfies the hypotheses of \ref{thm:balanced-3-partition}. Therefore, there is a balanced tripartite subgraph $H'$ of $G'$ such that every vertex $v\in V(G')$ is incident to more than $d'_i(v)/2$ edges of $H'$ colored $i$, for each $i\in [3]$. 
We now claim that the $3$ parts of $H \coloneqq H' \setminus \Upsilon$ form a \JBJ{balanced majority  $3$-partition} of $G$. 
Indeed, for every $u\in V(G)$, if $N(u)$ does not intersect $\Upsilon$ then $d_i(u)=d'_i(u)$ and $N_H(u)=N_{H'}(u)$, so $u$ is incident to more than $d_i(u)/2$ edges colored $i$ in $H$. If $N(u)$ intersects $\Upsilon$, then $d'_i(u) \ge d_i(u)+1$ and $\deg_H(u) \ge \deg_{H'}(u)-1$, so the number of edges colored $i$ incident to $u$ in $H$ is more than $(d_i(u)+1)/2-1$, hence at leat $d_i(u)/2$. This ends the proof.
\end{proof}
\section{Spanning connected 2-partitions}
\label{sec:spanning}
The following result is not difficult to prove (just take a max cut partition). For a generalization  to general $k$ partitions see \cite{bangJGT99}.

\begin{theorem}
  \label{thm:bipEC}
  Every graph $G=(V,E)$ of edge-connectivity $\lambda{}(G)$ has a spanning bipartite subgraph $H$ with $\lambda{}(H)\geq\lambda{}(G)/2$.
\end{theorem}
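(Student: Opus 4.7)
The plan is to let $H$ be the bipartite subgraph of $G$ induced by a maximum edge-cut $(V_1,V_2)$, that is, $(V_1,V_2)$ maximizes the number of edges of $G$ crossing the partition. I would then show that \emph{every} cut in $H$ has at least $\lambda(G)/2$ edges, which forces $\lambda(H)\ge \lambda(G)/2$.

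To carry this out, I would fix an arbitrary nontrivial partition $(S,\overline{S})$ of $V$ and split the vertex set into the four cells $A=S\cap V_1$, $B=S\cap V_2$, $C=\overline{S}\cap V_1$, $D=\overline{S}\cap V_2$. Counting edges by pairs of cells,
\[
|E_G(S,\overline{S})| = e(A,C)+e(A,D)+e(B,C)+e(B,D),
\]
while only the pairs $(A,D)$ and $(B,C)$ cross the max-cut partition, so
\[
|E_H(S,\overline{S})| = e(A,D)+e(B,C).
\]
The key observation is to compare $(V_1,V_2)=(A\cup C,\,B\cup D)$ with the alternative bipartition $(A\cup D,\,B\cup C)$. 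Both bipartitions share the contribution $e(A,B)+e(C,D)$, so the max-cut hypothesis applied to the alternative partition yields
\[
e(A,D)+e(B,C)\;\ge\; e(A,C)+e(B,D).
\]
Adding $e(A,D)+e(B,C)$ to both sides gives $2|E_H(S,\overline{S})|\ge |E_G(S,\overline{S})|\ge \lambda(G)$, hence $|E_H(S,\overline{S})|\ge \lambda(G)/2$. Since $(S,\overline{S})$ was arbitrary, this bounds every edge-cut of $H$ from below and establishes $\lambda(H)\ge \lambda(G)/2$.

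I do not expect a significant obstacle: the swap-and-compare trick is essentially the standard argument that a max-cut halves each vertex degree (Proposition~\ref{prop:bipsp}), now applied globally to cuts rather than locally to vertices. The only thing to be slightly careful about is to make sure the alternative partition $(A\cup D,\,B\cup C)$ is genuinely a bipartition (i.e.\ both sides are nonempty); if one side is empty, then one of $V_1,V_2$ is entirely contained in $S$ or in $\overline{S}$, and in that degenerate case the inequality $e(A,D)+e(B,C)\ge e(A,C)+e(B,D)$ still follows (indeed one of the two sides of the desired inequality becomes zero), so the conclusion holds in all cases.
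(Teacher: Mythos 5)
Your proof is correct and takes exactly the route the paper intends: the paper offers no written argument for Theorem~\ref{thm:bipEC} beyond the hint ``just take a max cut partition,'' and your four-cell counting comparison of $(A\cup C, B\cup D)$ with $(A\cup D, B\cup C)$ is the standard way to carry that hint out. The degenerate case where the alternative partition has an empty side is handled correctly, so nothing is missing.
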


For digraphs no such result is possible due to the following.

\begin{theorem}\cite{bangJGT92}
  For every integer $k\geq 1$ there exists a $k$-strong eulerian digraph $D$ which has no spanning bipartite subdigraph in which every vertex has in- and out-degree at least one.
\end{theorem}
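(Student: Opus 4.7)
My plan is to reduce the problem to the existence of a non-$2$-colorable hypergraph with suitable regularity and to realize it as the closed out-neighborhood hypergraph of a digraph.

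The key observation is that a digraph $D$ admits a spanning bipartite subdigraph in which every vertex has both positive in- and out-degree if and only if there exists a bipartition $(V_1,V_2)$ of $V(D)$ such that, for every vertex $v$, neither $\{v\}\cup N^+(v)$ nor $\{v\}\cup N^-(v)$ is monochromatic (this is exactly the condition that $v$ has an out-neighbor and an in-neighbor in the opposite class). Hence it suffices to construct, for each $k\geq 1$, a $k$-strong eulerian digraph $D_k$ such that the hypergraph $\mathcal{H}^+(D_k):=\{\{v\}\cup N^+(v):v\in V(D_k)\}$ is not $2$-colorable; every bipartition will then yield a monochromatic hyperedge $\{v\}\cup N^+(v)$, i.e.\ an out-trapped vertex.

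For $k=1$ I take $D_1=\overrightarrow{C_3}$: it is $1$-strong, $1$-regular (hence eulerian), and the closed out-neighborhoods $\{v_i,v_{i+1}\}$ are the edges of a triangle, a non-$2$-colorable $2$-uniform hypergraph. For $k\geq 2$, I would start from a $(k+1)$-uniform, $(k+1)$-regular, non-$2$-colorable hypergraph $\mathcal{H}$ on $n$ vertices with exactly $n$ hyperedges (for $k=2$ the Fano plane works; for larger $k$ one appeals to explicit constructions such as projective planes of order~$k$ or, failing that, to suitable combinatorial designs of higher order). By Hall's theorem applied to the $(k+1)$-regular incidence bipartite graph of $\mathcal{H}$, one gets a bijection $h:E(\mathcal{H})\to V$ with $h(e)\in e$; I then define $D_k$ by setting $N^+_{D_k}(v):=h^{-1}(v)\setminus\{v\}$ for every $v\in V$. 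Each vertex then has $d^+(v)=k$; and because each $v$ lies in exactly $k+1$ hyperedges, one as head (giving $v$'s out-arcs) and $k$ as non-head (each contributing an in-arc $h(e)\to v$), we have $d^-(v)=k$ as well, so $D_k$ is eulerian. By the observation above, the non-$2$-colorability of $\mathcal{H}$ immediately forbids a good bipartition.

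The main obstacle is verifying that $D_k$ can be taken to be $k$-strong. For $k=2$ with $\mathcal{H}$ the Fano plane, the high symmetry of the plane allows a direct check that removing any vertex leaves a strongly connected digraph. For larger $k$, the plan is to exploit the rich incidence structure of $\mathcal{H}$ together with freedom in choosing the SDR $h$: given two vertices $u,w$, the $k+1$ hyperedges through each of them pairwise intersect (in a projective plane, any two lines meet in one point), and one combines these intersections with the orientations prescribed by $h$ to produce $k$ internally vertex-disjoint $u$-$w$ paths, so that Menger's theorem yields $k$-strong connectivity. As a fallback, if the direct argument is not tight for a particular $k$, one may always replace $k$ by a larger prime power $q$ and work with $\mathcal{H}$ of index $q+1$, obtaining a $q$-strong (hence $k$-strong) eulerian digraph.
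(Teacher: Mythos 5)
The reduction to non-$2$-colourability of the closed out-neighbourhood hypergraph is a sound sufficient criterion, and your $k=1$ and (modulo the unchecked $2$-strongness) $k=2$ instances are fine, but the construction collapses for larger $k$ because the hypergraphs you need do not exist. You require a $(k+1)$-uniform, $(k+1)$-regular, non-$2$-colourable hypergraph with equally many vertices and edges. First, your proposed source of such objects fails already at $k=3$: the Fano plane is the \emph{only} projective plane without Property~B; every projective plane of order $q\ge 3$ admits a $2$-colouring with no monochromatic line (equivalently, a partition into two blocking sets), so $\mathrm{PG}(2,k)$ gives you nothing for $k\ge 3$. Second, and fatally, no replacement design can exist once $k$ is moderately large: in an $r$-uniform $r$-regular hypergraph each edge meets at most $r(r-1)$ other edges, and the symmetric Lov\'asz Local Lemma (Lemma~\ref{lem:LLL}, applied with $p=2^{1-r}$ and $D=r(r-1)$) shows that any $r$-uniform hypergraph in which each edge meets at most $2^{r-1}/e-1$ others is $2$-colourable; since $r(r-1)\le 2^{r-1}/e$ for all $r\ge 9$, every $(k+1)$-uniform $(k+1)$-regular hypergraph with $k\ge 8$ is $2$-colourable. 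So your scheme cannot produce counterexamples for all $k$, no matter which designs you substitute.

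The missing idea is to drop regularity of the hypergraph, and hence of the digraph: ``eulerian'' only requires $d^+(v)=d^-(v)$ at each vertex and ``$k$-strong'' only lower-bounds degrees, so the vertices playing the role of hyperedge heads may keep out-degree about $k$ while ground-set vertices acquire very large in-degree. Non-$2$-colourable $(k+1)$-uniform hypergraphs certainly exist once one allows many edges through each vertex (for instance, all $(k+1)$-subsets of a $(2k+1)$-set), and a working construction realises such a hypergraph as a family of out-neighbourhoods and then adds further arcs or gadgets to restore the eulerian degree condition and the $k$-strong connectivity. Your $k$-strongness argument (Menger via pairwise intersecting lines, plus a choice of SDR) is also only sketched and would need to be carried out, but that is secondary to the existence problem above.
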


\begin{theorem}\cite{bangJGT92}
  It is NP-complete to decide for a given digraph $D$ whether $D$ has a spanning bipartite subdigraph in which every vertex has in- and out-degree at least one.
\end{theorem}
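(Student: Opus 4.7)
Membership in NP is routine: given a candidate bipartition $(V_1,V_2)$ of $V(D)$, check in polynomial time that every vertex has at least one in-arc and at least one out-arc to the opposite part.

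For NP-hardness I would reduce from 3-SAT, following the same template as the proof of Theorem~\ref{thm:atleastone}, with the two colour conditions of that theorem replaced by the two orientation conditions (out-arc to the opposite part, in-arc from the opposite part). Given a 3-SAT formula $\mathcal{F}=C_1\wedge\cdots\wedge C_m$ over $x_1,\ldots,x_n$, I would construct a digraph $D$ with: (i)~for each variable $x_i$, two \emph{literal vertices} $v_i,\bar{v}_i$ plus a small rigid gadget that forces them into opposite parts of any valid bipartition; (ii)~for each clause $C_j$, a \emph{clause vertex} $w_j$ with an out-arc to each of its three literal vertices, together with a private in-neighbour $w'_j$ and a small gadget forcing $w_j$ and $w'_j$ to opposite parts; and (iii)~two anchor gadgets (playing the role of the $z$- and $h$-cycles of Theorem~\ref{thm:atleastone}) that pin all the $w_j$'s to the same part, say $V_1$, pin all the $w'_j$'s to $V_2$, and simultaneously supply every literal vertex with a free in-arc and a free out-arc coming from opposite parts of the anchor, so that the literal vertices' degree conditions are met regardless of their placement.

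One direction is then immediate: from a satisfying assignment $\phi$ of $\mathcal{F}$ we place $v_i\in V_2$ exactly when $\phi(x_i)=\text{true}$, put $w_j\in V_1$ and $w'_j\in V_2$, and place the anchor vertices according to the forced bipartition of each anchor; the clause vertex $w_j\in V_1$ then receives its required out-arc into $V_2$ from any literal vertex of $C_j$ made true by $\phi$. Conversely, in a valid bipartition $(V_1,V_2)$ of $D$ the rigid gadgets of (i) and (iii) force $v_i$ and $\bar{v}_i$ to opposite parts and all clause vertices to a common side, which we may assume is $V_1$; setting $\phi(x_i)=\text{true}$ iff $v_i\in V_2$ is then well defined, and the mandatory out-arc from $w_j$ into $V_2$ witnesses that some literal of $C_j$ is true under $\phi$.

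The main obstacle is designing the rigid gadgets in (i)--(iii). In the undirected 2-edge-coloured setting a single degree-$1$ red pendant already forces two vertices to opposite parts, whereas in the digraph setting both in-arcs and out-arcs must be controlled simultaneously. I would therefore use the directed $4$-cycle as the basic rigid unit: in $\overrightarrow{C}_4$ each vertex has a unique in-neighbour and a unique out-neighbour, so any valid bipartition of the cycle is determined up to swapping its two sides. Stitching several such $\overrightarrow{C}_4$'s together through shared vertices then propagates the forced split in a controlled way, much as the red matchings $\{v_i\bar{v}_i\}$ and $\{w'_jw_j\}$ did in the proof of Theorem~\ref{thm:atleastone}. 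Care is needed to ensure that no auxiliary arc added to raise a gadget vertex's in- or out-degree to the required minimum of $1$ accidentally frees another vertex from its intended part.
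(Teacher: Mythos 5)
First, a point of order: the paper does not prove this statement at all --- it is quoted from \cite{bangJGT92} --- so there is no in-paper proof to compare yours against; the closest thing is the proof of Theorem~\ref{thm:atleastone}, the 2-edge-coloured analogue, which you rightly take as your template. Judged on its own terms, your proposal has the right overall shape (membership in NP is indeed routine, and a 3-SAT reduction built from rigid forcing gadgets is exactly the kind of argument that establishes this result), but as written it is a plan rather than a proof. The entire content of such a reduction lives in the gadgets of your items (i)--(iii) and in the verification that they force what you claim in \emph{every} valid bipartition, and none of that is carried out: you state what the variable, clause and anchor gadgets should accomplish, observe that the directed $4$-cycle is rigid, and then explicitly defer the main difficulty (``care is needed\ldots''). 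Note also that a digon $u\leftrightarrow v$ is an even simpler rigid unit than $\overrightarrow{C}_4$ and is the natural directed analogue of the red perfect matching used in Theorem~\ref{thm:atleastone}.

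Beyond incompleteness, one concrete step fails as stated. You propose that the anchor ``supply every literal vertex with a free in-arc and a free out-arc coming from opposite parts of the anchor, so that the literal vertices' degree conditions are met regardless of their placement.'' If $v_i$ receives an in-arc from an anchor vertex $p$ and sends an out-arc to an anchor vertex $q$ with $p$ and $q$ forced to opposite parts, then whichever part $v_i$ lands in, exactly one of these two arcs is internal to $v_i$'s part: if $v_i$ is opposite $p$, then $q$ lies on $v_i$'s side and the out-condition is not met by the anchor, and symmetrically in the other case. To make the literal vertices unconditionally safe you would need in-arcs from both parts and out-arcs to both parts, or, more economically, you can let the digon $v_i\leftrightarrow\bar v_i$ do double duty: making $\bar v_i$ the unique out-neighbour of $v_i$ (and vice versa) forces the pair to opposite parts and simultaneously hands each of them one crossing in-arc and one crossing out-arc, with the clause arcs $w_j\to v_i$ then being harmless extra in-arcs. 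Until the gadgets are pinned down at this level of detail and the ``only if'' direction (all $w_j$ on one side, $v_i$ and $\bar v_i$ opposite, in every valid bipartition) is actually checked, the argument is not complete.
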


It is a well known result of Nash-Williams and Tutte that every $2k$-edge-connected graph has $k$ edge-disjoint spanning trees. Combining this with Theorem \ref{thm:bipEC} we get that every $4k$-edge-connected graph $G=(V,E)$ has a spanning bipartite subgraph $H$ with $k$ edge-disjoint spanning trees. The next result (Proposition \ref{prop:nogoodtrees}) shows that no similar result holds for monochromatic trees in 2-partitions of 2-edge-coloured graphs. To prove it we need the following easy observation.

\begin{lemma}
\label{lem1}
Let $B$ be any connected bipartite graph with partite sets $V_1$ and $V_2$.
Let $X$ and $Y$ be any partition of $V(B)$ and let $B(X,Y)$ denote the subgraph of $B$ induced by all edges between $X$ and $Y$.
Then $B(X,Y)$ is connected if and only if $\{X,Y\} = \{V_1,V_2\}$.
\end{lemma}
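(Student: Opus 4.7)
The plan is to prove both directions explicitly, where the forward direction is immediate and the reverse direction reduces to a clean case analysis on the four blocks obtained by intersecting the partition $\{X,Y\}$ with the bipartition $\{V_1,V_2\}$.

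For the forward direction I would simply observe that if $\{X,Y\}=\{V_1,V_2\}$ then every edge of $B$ has one endpoint in $X$ and the other in $Y$, since $B$ is bipartite with sides $V_1$ and $V_2$. Consequently $B(X,Y)=B$, which is connected by hypothesis.

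For the reverse direction, I would set up the four blocks $A_1 := X\cap V_1$, $A_2 := X\cap V_2$, $C_1 := Y\cap V_1$, $C_2 := Y\cap V_2$, which partition $V(B)$. The key observation is that every edge of $B$ has one end in $V_1$ and the other in $V_2$, so an edge of $B(X,Y)$ is either of the form $A_1$--$C_2$ or of the form $A_2$--$C_1$ (the options $A_1$--$A_2$ and $C_1$--$C_2$ are excluded because their endpoints lie on the same side of $\{X,Y\}$, while $A_1$--$C_1$ and $A_2$--$C_2$ are excluded because they would lie inside $V_1$ or $V_2$ respectively). Therefore no edge of $B(X,Y)$ crosses between the vertex sets $A_1\cup C_2$ and $A_2\cup C_1$, which gives a separation of $V(B)$ in $B(X,Y)$.

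For $B(X,Y)$ to be connected, one of these two sides of the separation must be empty. If $A_2\cup C_1=\emptyset$ then $X\cap V_2=\emptyset$ and $Y\cap V_1=\emptyset$, whence $X\subseteq V_1$ and $Y\subseteq V_2$; combined with $X\cup Y=V(B)=V_1\cup V_2$ this forces $X=V_1$ and $Y=V_2$. The symmetric case $A_1\cup C_2=\emptyset$ yields $X=V_2$ and $Y=V_1$. In both subcases we obtain $\{X,Y\}=\{V_1,V_2\}$, as required. There is no real obstacle here; the only subtle point is to remember that $B(X,Y)$ is taken as a spanning subgraph on $V(B)$, so that isolated vertices also witness disconnectedness, which is why the separation argument covers all cases uniformly rather than splitting on which blocks are individually empty.
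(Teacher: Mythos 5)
Your proof is correct. The paper establishes only the nontrivial direction, and it does so by a parity argument: taking $x_1\in X\cap V_1$ and $x_2\in X\cap V_2$, it observes that any $(x_1,x_2)$-path in $B(X,Y)$ would have an even number of edges (since $B(X,Y)$ is bipartite with parts $X$ and $Y$ and both endpoints lie in $X$), whereas every $(x_1,x_2)$-path in $B$ has odd length because $x_1$ and $x_2$ lie on opposite sides of the bipartition $(V_1,V_2)$; hence $x_1$ and $x_2$ lie in different components of $B(X,Y)$, and the symmetric argument for $Y$ forces $\{X,Y\}=\{V_1,V_2\}$. You instead refine $V(B)$ into the four blocks $X\cap V_1$, $X\cap V_2$, $Y\cap V_1$, $Y\cap V_2$, note that every edge of $B(X,Y)$ must simultaneously cross both bipartitions and therefore joins $X\cap V_1$ to $Y\cap V_2$ or $X\cap V_2$ to $Y\cap V_1$, and conclude that $(X\cap V_1)\cup(Y\cap V_2)$ versus $(X\cap V_2)\cup(Y\cap V_1)$ is an edge-free cut of $B(X,Y)$, so connectivity forces one side to be empty. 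Both arguments rest on the same interaction between the two bipartitions, but yours exhibits the disconnection by an explicit empty cut rather than by comparing path parities, which makes the case analysis slightly more uniform (in particular it handles vertices isolated in $B(X,Y)$ without any extra remark, whereas the paper's phrasing tacitly assumes such vertices still count as witnesses of disconnectedness). You also record the trivial converse direction, which the paper omits; that is harmless either way.
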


\pf 
First assume that $X \cap V_1 \not= \emptyset$ and $X \cap V_2 \not= \emptyset$.
Let $x_1 \in X \cap V_1$ and let $x_2 \in X \cap V_2$ be arbitrary.
Any path between $x_1$ and $x_2$ in $B(X,Y)$ would have an even number of edges (as $B(X,Y)$ is bipartite), 
but there is no $(x_1,x_2)$-path in $B$ with an even number of edges as $x_1 \in V_1$ and  $x_2 \in V_2$, 
a contradiction. So $x_1$ and $x_2$ cannot belong to the same connected component of $B(X,Y)$.
Hence if $B(X,Y)$ is connected then $X \cap V_1 = \emptyset$ or $X \cap V_2 = \emptyset$. 

Analogously, if $B(X,Y)$ is connected then $Y \cap V_1 = \emptyset$ or $Y \cap V_2 = \emptyset$, 
which implies that $\{X,Y\} = \{V_1,V_2\}$, as desired.~\qed

\begin{proposition}
  \label{prop:nogoodtrees}
  For every integer $k\geq 1$ there exists a 2-edge-colored graph $G=(V,E)$ such that each of the subgraphs
  $G_{blue}=(V,E_{blue})$ and $G_{red}=(V,E_{red})$ are $k$-edge-connected but $G$ has no spanning bipartite subgraph $H$ for which both the red and the blue subgraph of $H$ is connected.
\end{proposition}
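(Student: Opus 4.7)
The plan is to reduce the problem to Lemma~\ref{lem1} by constructing a $2$-edge-coloured graph $G$ in which $G_{red}$ and $G_{blue}$ are each connected bipartite graphs but have \emph{different} bipartitions. Indeed, any spanning bipartite subgraph $H$ of $G$ is determined by a $2$-partition $(X,Y)$ of $V$; asking that the red subgraph of $H$ be connected, Lemma~\ref{lem1} applied to $G_{red}$ forces $\{X,Y\}$ to be the bipartition of $G_{red}$, and asking that the blue subgraph of $H$ be connected forces $\{X,Y\}$ to be the bipartition of $G_{blue}$. If the two bipartitions of $G_{red}$ and $G_{blue}$ are distinct, no such $H$ can exist.

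For the construction, I would partition $V$ into four pairwise disjoint classes $P_1,P_2,Q_1,Q_2$, each of size $k$, and set $A_r\coloneqq P_1\cup P_2$, $B_r\coloneqq Q_1\cup Q_2$, $A_b\coloneqq P_1\cup Q_1$, $B_b\coloneqq P_2\cup Q_2$. Colour red the edges of the three complete bipartite graphs $K_{P_1,Q_1}$, $K_{P_1,Q_2}$ and $K_{P_2,Q_2}$, and colour blue the edges of $K_{P_1,P_2}$, $K_{P_2,Q_1}$ and $K_{Q_1,Q_2}$. By direct inspection, the six class-pairs used by the two colours are disjoint so $G$ is a simple graph, the red edges all lie between $A_r$ and $B_r$, the blue edges all lie between $A_b$ and $B_b$, and the bipartitions $\{A_r,B_r\}$ and $\{A_b,B_b\}$ differ (since all four $P_i,Q_j$ are non-empty).

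The main step, and the only place where a genuine computation is required, is to show that $G_{red}$ and $G_{blue}$ are each $k$-edge-connected. The structure makes this easy: each colour subgraph is isomorphic to a ``path'' of three copies of $K_{k,k}$ glued consecutively at common vertex classes (for $G_{red}$ the meta-path is $Q_1{-}P_1{-}Q_2{-}P_2$; for $G_{blue}$ it is $P_1{-}P_2{-}Q_1{-}Q_2$). The minimum degree is exactly $k$ in each colour (attained at the two extreme classes — $P_2$ and $Q_1$ for red, $P_1$ and $Q_2$ for blue), so the edge-connectivity is at most $k$. For the matching lower bound I plan a Menger-style argument: between any two vertices one routes $k$ edge-disjoint paths through the three $K_{k,k}$-blocks using perfect matchings in the intermediate class. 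For instance, in $G_{red}$ a vertex of $P_2$ reaches a vertex of $Q_1$ via $k$ length-$3$ paths $P_2\to Q_2\to P_1\to Q_1$ obtained from all $k$ edges at each endpoint and a perfect matching between $Q_2$ and $P_1$; the other pairs (same class, or adjacent classes in the meta-path) admit length-$2$ collections of $k$ paths even more easily, and by isomorphism the same argument handles $G_{blue}$.

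The conclusion is then immediate: if $H$ were a spanning bipartite subgraph of $G$ induced by $(X,Y)$ whose red and blue subgraphs are both connected, Lemma~\ref{lem1} applied in turn to $G_{red}$ and $G_{blue}$ would give $\{X,Y\}=\{A_r,B_r\}$ and $\{X,Y\}=\{A_b,B_b\}$, contradicting the fact that these two bipartitions differ. Hence $G$ has the required property.
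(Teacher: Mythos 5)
Your proposal is correct and establishes the proposition, but via a different construction from the paper's. The paper colours blue the edges of a $k$-edge-connected $k$-regular bipartite graph with parts $V_1,V_2$ of size at least $k+1$, and colours red a perfect matching between $V_1$ and $V_2$ together with all edges inside each $V_i$; Lemma~\ref{lem1} is then invoked only once, to force the partition to be $(V_1,V_2)$, after which the red cross-subgraph is a bare perfect matching and hence visibly disconnected. You instead arrange for \emph{both} colour classes to be connected bipartite graphs with distinct bipartitions (two ``paths'' of three $K_{k,k}$-blocks on the four classes $P_1,P_2,Q_1,Q_2$), and apply Lemma~\ref{lem1} twice to get an immediate clash. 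Your version is more symmetric and fully explicit on $4k$ vertices, at the cost of having to verify $k$-edge-connectivity for both colours rather than relying on a known $k$-edge-connected $k$-regular bipartite graph; the paper's version needs almost no connectivity computation for the red side since cliques plus a large matching are trivially $k$-edge-connected. One small slip in your sketch: for two vertices in \emph{adjacent} classes of the meta-path (e.g.\ $u\in Q_1$, $v\in P_1$ in $G_{red}$) there are no length-$2$ paths at all, since such vertices lie on opposite sides of the bipartition and every path between them has odd length; the correct routing there is the direct edge $uv$ together with $k-1$ edge-disjoint length-$3$ paths $u\to p\to q\to v$ with distinct $p\in P_1\setminus\{v\}$ and distinct $q\in Q_2$. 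This is easily repaired and does not affect the validity of the overall argument.
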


\pf Take a $k$-edge connected $k$-regular bipartite graph $H'=(V_1\cup V_2,E_{blue})$ in which all edges are colored blue and $|V_1|=|V_2|\geq k+1$. Add a red perfect matching between the vertices of $V_1$ and $V_2$ such that no edge is both red and blue and finally add all possible red edges inside each $V_i$ and let $G$ be the resulting 2-edge-colored graph. It is easy to check that both the blue and red subgraph of $G$ is $k$-edge-connected. As $(V_1,V_2)$ is the only bipartition of $V$ for which the induced blue bipartite subgraph is connected the claim follows. \qed


\begin{figure}[H]
  \begin{center}
\begin{tabular}{|c|c|c|c|c|} \hline
\tikzstyle{vertexL}=[circle,draw, minimum size=5pt, scale=0.2, inner sep=0.5pt]
\tikzstyle{vertexB}=[circle,draw, minimum size=5pt, scale=0.2, inner sep=0.5pt]
\tikzstyle{vertexR}=[circle,draw, color=red!100, minimum size=14pt, scale=0.6, inner sep=0.5pt]
\hspace{-0.6cm} \begin{tikzpicture}[scale=0.14]
  \node at (2,-3.3) {\mbox{ }};
  \node at (18,11) {\mbox{ }};
  \node at (1.5,3.5) {$E_1$};

  \node (x11) at (1,8) [vertexB]{$x_1^1$};
  \node (x12) at (3,8) [vertexB]{$x_1^2$};
  \node (x13) at (5,8) [vertexB]{$x_1^3$};

  \node (x21) at (8,8) [vertexB]{$x_2^1$};
  \node (x22) at (10,8) [vertexB]{$x_2^2$};
  \node (x23) at (12,8) [vertexB]{$x_2^3$};

  \node (x31) at (15,8) [vertexB]{$x_3^1$};
  \node (x32) at (17,8) [vertexB]{$x_3^2$};
  \node (x33) at (19,8) [vertexB]{$x_3^3$};

  \node (s4) at (3,1) [vertexL]{$s_4$};
  \node (t4) at (5,1) [vertexL]{$t_4$};
  \node (s1) at (7,1) [vertexL]{$s_1$};
  \node (t1) at (9,1) [vertexL]{$t_1$};
  \node (s2) at (11,1) [vertexL]{$s_2$};
  \node (t2) at (13,1) [vertexL]{$t_2$};
  \node (s3) at (15,1) [vertexL]{$s_3$};
  \node (t3) at (17,1) [vertexL]{$t_3$};

\withCol{{\color{red}
  \draw[line width=0.03cm] (x11) to (x12);
  \draw[line width=0.03cm] (x12) to (x13);
  \draw[line width=0.03cm] (x11) to [out=25, in=155] (x13);
  \draw[line width=0.03cm] (x21) to (x22);
  \draw[line width=0.03cm] (x22) to (x23);
  \draw[line width=0.03cm] (x21) to [out=25, in=155] (x23);
  \draw[line width=0.03cm] (x31) to (x32);
  \draw[line width=0.03cm] (x32) to (x33);
  \draw[line width=0.03cm] (x31) to [out=25, in=155] (x33);
}}

\noCol{
  \draw[line width=0.01cm] (x11) to (x12);
  \draw[line width=0.01cm] (x12) to (x13);
  \draw[line width=0.01cm] (x11) to [out=25, in=155] (x13);
  \draw[line width=0.01cm] (x21) to (x22);
  \draw[line width=0.01cm] (x22) to (x23);
  \draw[line width=0.01cm] (x21) to [out=25, in=155] (x23);
  \draw[line width=0.01cm] (x31) to (x32);
  \draw[line width=0.01cm] (x32) to (x33);
  \draw[line width=0.01cm] (x31) to [out=25, in=155] (x33);
}
   \end{tikzpicture} & 
\tikzstyle{vertexL}=[circle,draw, minimum size=5pt, scale=0.2, inner sep=0.5pt]
\tikzstyle{vertexB}=[circle,draw, minimum size=5pt, scale=0.2, inner sep=0.5pt]
\tikzstyle{vertexR}=[circle,draw, color=red!100, minimum size=14pt, scale=0.6, inner sep=0.5pt]
\hspace{-0.6cm} \begin{tikzpicture}[scale=0.14]
  \node at (2,-3.3) {\mbox{ }};
  \node at (18,11) {\mbox{ }};
  \node at (1.5,3.5) {$E_2$};

  \node (x11) at (1,8) [vertexB]{$x_1^1$};
  \node (x12) at (3,8) [vertexB]{$x_1^2$};
  \node (x13) at (5,8) [vertexB]{$x_1^3$};

  \node (x21) at (8,8) [vertexB]{$x_2^1$};
  \node (x22) at (10,8) [vertexB]{$x_2^2$};
  \node (x23) at (12,8) [vertexB]{$x_2^3$};

  \node (x31) at (15,8) [vertexB]{$x_3^1$};
  \node (x32) at (17,8) [vertexB]{$x_3^2$};
  \node (x33) at (19,8) [vertexB]{$x_3^3$};

  \node (s4) at (3,1) [vertexL]{$s_4$};
  \node (t4) at (5,1) [vertexL]{$t_4$};
  \node (s1) at (7,1) [vertexL]{$s_1$};
  \node (t1) at (9,1) [vertexL]{$t_1$};
  \node (s2) at (11,1) [vertexL]{$s_2$};
  \node (t2) at (13,1) [vertexL]{$t_2$};
  \node (s3) at (15,1) [vertexL]{$s_3$};
  \node (t3) at (17,1) [vertexL]{$t_3$};

\withCol{{\color{blue}
  \draw[line width=0.03cm] (s1) to (t1);
  \draw[line width=0.03cm] (t1) to (s2);
  \draw[line width=0.03cm] (s2) to (t2);
  \draw[line width=0.03cm] (t2) to (s3);
  \draw[line width=0.03cm] (s3) to (t3);
  \draw[line width=0.03cm] (t3) to [out=240, in=300] (s4);
  \draw[line width=0.03cm] (s4) to (t4);
  \draw[line width=0.03cm] (t4) to (s1);
}}

\noCol{
  \draw[line width=0.04cm] (s1) to (t1);
  \draw[line width=0.04cm] (t1) to (s2);
  \draw[line width=0.04cm] (s2) to (t2);
  \draw[line width=0.04cm] (t2) to (s3);
  \draw[line width=0.04cm] (s3) to (t3);
  \draw[line width=0.04cm] (t3) to [out=240, in=300] (s4);
  \draw[line width=0.04cm] (s4) to (t4);
  \draw[line width=0.04cm] (t4) to (s1);
}

\withCol{{\color{red}
  \draw[line width=0.03cm] (s4) to [out=330, in=210] (t1);
  \draw[line width=0.03cm] (t4) to [out=330, in=210] (s2);
  \draw[line width=0.03cm] (s1) to [out=330, in=210] (t2);
  \draw[line width=0.03cm] (t1) to [out=330, in=210] (s3);
  \draw[line width=0.03cm] (s2) to [out=330, in=210] (t3);
  \draw[line width=0.03cm] (s4) to [out=310, in=230] (t2);
  \draw[line width=0.03cm] (t4) to [out=310, in=230] (s3);
  \draw[line width=0.03cm] (s1) to [out=310, in=230] (t3);
}}

\noCol{
  \draw[line width=0.01cm] (s4) to [out=330, in=210] (t1);
  \draw[line width=0.01cm] (t4) to [out=330, in=210] (s2);
  \draw[line width=0.01cm] (s1) to [out=330, in=210] (t2);
  \draw[line width=0.01cm] (t1) to [out=330, in=210] (s3);
  \draw[line width=0.01cm] (s2) to [out=330, in=210] (t3);
  \draw[line width=0.01cm] (s4) to [out=310, in=230] (t2);
  \draw[line width=0.01cm] (t4) to [out=310, in=230] (s3);
  \draw[line width=0.01cm] (s1) to [out=310, in=230] (t3);
}
   \end{tikzpicture} & 
\tikzstyle{vertexL}=[circle,draw, minimum size=5pt, scale=0.2, inner sep=0.5pt]
\tikzstyle{vertexB}=[circle,draw, minimum size=5pt, scale=0.2, inner sep=0.5pt]
\tikzstyle{vertexR}=[circle,draw, color=red!100, minimum size=14pt, scale=0.6, inner sep=0.5pt]
\hspace{-0.6cm} \begin{tikzpicture}[scale=0.14]
  \node at (2,-3.3) {\mbox{ }};
  \node at (18,11) {\mbox{ }};
  \node at (1.5,3.5) {$E_3$};

  \node (x11) at (1,8) [vertexB]{$x_1^1$};
  \node (x12) at (3,8) [vertexB]{$x_1^2$};
  \node (x13) at (5,8) [vertexB]{$x_1^3$};

  \node (x21) at (8,8) [vertexB]{$x_2^1$};
  \node (x22) at (10,8) [vertexB]{$x_2^2$};
  \node (x23) at (12,8) [vertexB]{$x_2^3$};

  \node (x31) at (15,8) [vertexB]{$x_3^1$};
  \node (x32) at (17,8) [vertexB]{$x_3^2$};
  \node (x33) at (19,8) [vertexB]{$x_3^3$};

  \node (s4) at (3,1) [vertexL]{$s_4$};
  \node (t4) at (5,1) [vertexL]{$t_4$};
  \node (s1) at (7,1) [vertexL]{$s_1$};
  \node (t1) at (9,1) [vertexL]{$t_1$};
  \node (s2) at (11,1) [vertexL]{$s_2$};
  \node (t2) at (13,1) [vertexL]{$t_2$};
  \node (s3) at (15,1) [vertexL]{$s_3$};
  \node (t3) at (17,1) [vertexL]{$t_3$};

\withCol{{\color{blue}
  \draw[line width=0.03cm] (x11) to [out=30, in=150] (x21);
  \draw[line width=0.03cm] (x21) to [out=30, in=150] (x31);
  \draw[line width=0.03cm] (x12) to [out=30, in=150] (x22);
  \draw[line width=0.03cm] (x13) to [out=25, in=155] (x32);
  \draw[line width=0.03cm] (x23) to [out=30, in=150] (x33);
}}

\noCol{
  \draw[line width=0.04cm] (x11) to [out=30, in=150] (x21);
  \draw[line width=0.04cm] (x21) to [out=30, in=150] (x31);
  \draw[line width=0.04cm] (x12) to [out=30, in=150] (x22);
  \draw[line width=0.04cm] (x13) to [out=25, in=155] (x32);
  \draw[line width=0.04cm] (x23) to [out=30, in=150] (x33);
}
   \end{tikzpicture} & 
\tikzstyle{vertexL}=[circle,draw, minimum size=5pt, scale=0.2, inner sep=0.5pt]
\tikzstyle{vertexB}=[circle,draw, minimum size=5pt, scale=0.2, inner sep=0.5pt]
\tikzstyle{vertexR}=[circle,draw, color=red!100, minimum size=14pt, scale=0.6, inner sep=0.5pt]
\hspace{-0.6cm} \begin{tikzpicture}[scale=0.14]
  \node at (2,-3.3) {\mbox{ }};
  \node at (18,11) {\mbox{ }};
  \node at (1.5,3.5) {$E_4$};

  \node (x11) at (1,8) [vertexB]{$x_1^1$};
  \node (x12) at (3,8) [vertexB]{$x_1^2$};
  \node (x13) at (5,8) [vertexB]{$x_1^3$};

  \node (x21) at (8,8) [vertexB]{$x_2^1$};
  \node (x22) at (10,8) [vertexB]{$x_2^2$};
  \node (x23) at (12,8) [vertexB]{$x_2^3$};

  \node (x31) at (15,8) [vertexB]{$x_3^1$};
  \node (x32) at (17,8) [vertexB]{$x_3^2$};
  \node (x33) at (19,8) [vertexB]{$x_3^3$};

  \node (s4) at (3,1) [vertexL]{$s_4$};
  \node (t4) at (5,1) [vertexL]{$t_4$};
  \node (s1) at (7,1) [vertexL]{$s_1$};
  \node (t1) at (9,1) [vertexL]{$t_1$};
  \node (s2) at (11,1) [vertexL]{$s_2$};
  \node (t2) at (13,1) [vertexL]{$t_2$};
  \node (s3) at (15,1) [vertexL]{$s_3$};
  \node (t3) at (17,1) [vertexL]{$t_3$};

\withCol{{\color{red}
  \draw[line width=0.03cm] (s1) to (x11);
  \draw[line width=0.03cm] (t1) to (x11);
  \draw[line width=0.03cm] (s1) to (x21);
  \draw[line width=0.03cm] (t1) to (x21);
  \draw[line width=0.03cm] (s1) to (x31);
  \draw[line width=0.03cm] (t1) to (x31);
}}

\noCol{
  \draw[line width=0.01cm] (s1) to (x11);
  \draw[line width=0.01cm] (t1) to (x11);
  \draw[line width=0.01cm] (s1) to (x21);
  \draw[line width=0.01cm] (t1) to (x21);
  \draw[line width=0.01cm] (s1) to (x31);
  \draw[line width=0.01cm] (t1) to (x31);
}
   \end{tikzpicture} & 
\tikzstyle{vertexL}=[circle,draw, minimum size=5pt, scale=0.2, inner sep=0.5pt]
\tikzstyle{vertexB}=[circle,draw, minimum size=5pt, scale=0.2, inner sep=0.5pt]
\tikzstyle{vertexR}=[circle,draw, color=red!100, minimum size=14pt, scale=0.6, inner sep=0.5pt]
\hspace{-0.6cm} \begin{tikzpicture}[scale=0.14]
  \node at (2,-3.3) {\mbox{ }};
  \node at (18,11) {\mbox{ }};
  \node at (1.5,3.5) {$E_5$};

  \node (x11) at (1,8) [vertexB]{$x_1^1$};
  \node (x12) at (3,8) [vertexB]{$x_1^2$};
  \node (x13) at (5,8) [vertexB]{$x_1^3$};

  \node (x21) at (8,8) [vertexB]{$x_2^1$};
  \node (x22) at (10,8) [vertexB]{$x_2^2$};
  \node (x23) at (12,8) [vertexB]{$x_2^3$};

  \node (x31) at (15,8) [vertexB]{$x_3^1$};
  \node (x32) at (17,8) [vertexB]{$x_3^2$};
  \node (x33) at (19,8) [vertexB]{$x_3^3$};

  \node (s4) at (3,1) [vertexL]{$s_4$};
  \node (t4) at (5,1) [vertexL]{$t_4$};
  \node (s1) at (7,1) [vertexL]{$s_1$};
  \node (t1) at (9,1) [vertexL]{$t_1$};
  \node (s2) at (11,1) [vertexL]{$s_2$};
  \node (t2) at (13,1) [vertexL]{$t_2$};
  \node (s3) at (15,1) [vertexL]{$s_3$};
  \node (t3) at (17,1) [vertexL]{$t_3$};

\withCol{{\color{blue}
  \draw[line width=0.03cm] (s2) to (x11);
  \draw[line width=0.03cm] (t2) to (x11);
  \draw[line width=0.03cm] (s2) to (x12);
  \draw[line width=0.03cm] (t2) to (x12);
  \draw[line width=0.03cm] (s2) to (x13);
  \draw[line width=0.03cm] (t2) to (x13);
  \draw[line width=0.03cm] (s2) to (x23);
  \draw[line width=0.03cm] (t2) to (x23);
}}

\noCol{
  \draw[line width=0.04cm] (s2) to (x11);
  \draw[line width=0.04cm] (t2) to (x11);
  \draw[line width=0.04cm] (s2) to (x12);
  \draw[line width=0.04cm] (t2) to (x12);
  \draw[line width=0.04cm] (s2) to (x13);
  \draw[line width=0.04cm] (t2) to (x13);
  \draw[line width=0.04cm] (s2) to (x23);
  \draw[line width=0.04cm] (t2) to (x23);
}
   \end{tikzpicture} \\ \hline 
\multicolumn{5}{|c|}{
\tikzstyle{vertexL}=[circle,draw, minimum size=20pt, scale=1.0, inner sep=0.5pt]
\tikzstyle{vertexB}=[circle,draw, minimum size=20pt, scale=1.0, inner sep=0.5pt]
\tikzstyle{vertexR}=[circle,draw, color=red!100, minimum size=14pt, scale=0.6, inner sep=0.5pt]
\begin{tikzpicture}[scale=0.65]
  \node at (1,3.5) {{\huge $G$}};

  \node (x11) at (1,8) [vertexB]{$x_1^1$};
  \node (x12) at (3,8) [vertexB]{$x_1^2$};
  \node (x13) at (5,8) [vertexB]{$x_1^3$};

  \node (x21) at (8,8) [vertexB]{$x_2^1$};
  \node (x22) at (10,8) [vertexB]{$x_2^2$};
  \node (x23) at (12,8) [vertexB]{$x_2^3$};

  \node (x31) at (15,8) [vertexB]{$x_3^1$};
  \node (x32) at (17,8) [vertexB]{$x_3^2$};
  \node (x33) at (19,8) [vertexB]{$x_3^3$};

  \node (s4) at (3,1) [vertexL]{$s_4$};
  \node (t4) at (5,1) [vertexL]{$t_4$};
  \node (s1) at (7,1) [vertexL]{$s_1$};
  \node (t1) at (9,1) [vertexL]{$t_1$};
  \node (s2) at (11,1) [vertexL]{$s_2$};
  \node (t2) at (13,1) [vertexL]{$t_2$};
  \node (s3) at (15,1) [vertexL]{$s_3$};
  \node (t3) at (17,1) [vertexL]{$t_3$};

\withCol{{\color{blue}
  \draw[line width=0.04cm] (s2) to (x11);
  \draw[line width=0.04cm] (t2) to (x11);
  \draw[line width=0.04cm] (s2) to (x12);
  \draw[line width=0.04cm] (t2) to (x12);
  \draw[line width=0.04cm] (s2) to (x13);
  \draw[line width=0.04cm] (t2) to (x13);
  \draw[line width=0.04cm] (s2) to (x23);
  \draw[line width=0.04cm] (t2) to (x23);

  \draw[line width=0.04cm] (x11) to [out=30, in=150] (x21);
  \draw[line width=0.04cm] (x21) to [out=30, in=150] (x31);

  \draw[line width=0.04cm] (x12) to [out=30, in=150] (x22);
  \draw[line width=0.04cm] (x13) to [out=25, in=155] (x32);
  \draw[line width=0.04cm] (x23) to [out=30, in=150] (x33);

  \draw[line width=0.04cm] (s1) to (t1);
  \draw[line width=0.04cm] (t1) to (s2);
  \draw[line width=0.04cm] (s2) to (t2);
  \draw[line width=0.04cm] (t2) to (s3);
  \draw[line width=0.04cm] (s3) to (t3);
  \draw[line width=0.04cm] (t3) to [out=240, in=300] (s4);
  \draw[line width=0.04cm] (s4) to (t4);
  \draw[line width=0.04cm] (t4) to (s1);
}}

\noCol{
  \draw[line width=0.07cm] (s2) to (x11);
  \draw[line width=0.07cm] (t2) to (x11);
  \draw[line width=0.07cm] (s2) to (x12);
  \draw[line width=0.07cm] (t2) to (x12);
  \draw[line width=0.07cm] (s2) to (x13);
  \draw[line width=0.07cm] (t2) to (x13);
  \draw[line width=0.07cm] (s2) to (x23);
  \draw[line width=0.07cm] (t2) to (x23);

  \draw[line width=0.07cm] (x11) to [out=30, in=150] (x21);
  \draw[line width=0.07cm] (x21) to [out=30, in=150] (x31);

  \draw[line width=0.07cm] (x12) to [out=30, in=150] (x22);
  \draw[line width=0.07cm] (x13) to [out=25, in=155] (x32);
  \draw[line width=0.07cm] (x23) to [out=30, in=150] (x33);

  \draw[line width=0.07cm] (s1) to (t1);
  \draw[line width=0.07cm] (t1) to (s2);
  \draw[line width=0.07cm] (s2) to (t2);
  \draw[line width=0.07cm] (t2) to (s3);
  \draw[line width=0.07cm] (s3) to (t3);
  \draw[line width=0.07cm] (t3) to [out=240, in=300] (s4);
  \draw[line width=0.07cm] (s4) to (t4);
  \draw[line width=0.07cm] (t4) to (s1);
}

\withCol{{\color{red}
  \draw[line width=0.04cm] (x11) to (x12);
  \draw[line width=0.04cm] (x12) to (x13);
  \draw[line width=0.04cm] (x11) to [out=25, in=155] (x13);
  \draw[line width=0.04cm] (x21) to (x22);
  \draw[line width=0.04cm] (x22) to (x23);
  \draw[line width=0.04cm] (x21) to [out=25, in=155] (x23);
  \draw[line width=0.04cm] (x31) to (x32);
  \draw[line width=0.04cm] (x32) to (x33);
  \draw[line width=0.04cm] (x31) to [out=25, in=155] (x33);

  \draw[line width=0.04cm] (s1) to (x11);
  \draw[line width=0.04cm] (t1) to (x11);
  \draw[line width=0.04cm] (s1) to (x21);
  \draw[line width=0.04cm] (t1) to (x21);
  \draw[line width=0.04cm] (s1) to (x31);
  \draw[line width=0.04cm] (t1) to (x31);

  \draw[line width=0.04cm] (s4) to [out=330, in=210] (t1);
  \draw[line width=0.04cm] (t4) to [out=330, in=210] (s2);
  \draw[line width=0.04cm] (s1) to [out=330, in=210] (t2);
  \draw[line width=0.04cm] (t1) to [out=330, in=210] (s3);
  \draw[line width=0.04cm] (s2) to [out=330, in=210] (t3);
  \draw[line width=0.04cm] (s4) to [out=310, in=230] (t2);
  \draw[line width=0.04cm] (t4) to [out=310, in=230] (s3);
  \draw[line width=0.04cm] (s1) to [out=310, in=230] (t3);
}}

\noCol{
  \draw[line width=0.02cm] (x11) to (x12);
  \draw[line width=0.02cm] (x12) to (x13);
  \draw[line width=0.02cm] (x11) to [out=25, in=155] (x13);
  \draw[line width=0.02cm] (x21) to (x22);
  \draw[line width=0.02cm] (x22) to (x23);
  \draw[line width=0.02cm] (x21) to [out=25, in=155] (x23);
  \draw[line width=0.02cm] (x31) to (x32);
  \draw[line width=0.02cm] (x32) to (x33);
  \draw[line width=0.02cm] (x31) to [out=25, in=155] (x33);

  \draw[line width=0.02cm] (s1) to (x11);
  \draw[line width=0.02cm] (t1) to (x11);
  \draw[line width=0.02cm] (s1) to (x21);
  \draw[line width=0.02cm] (t1) to (x21);
  \draw[line width=0.02cm] (s1) to (x31);
  \draw[line width=0.02cm] (t1) to (x31);

  \draw[line width=0.02cm] (s4) to [out=330, in=210] (t1);
  \draw[line width=0.02cm] (t4) to [out=330, in=210] (s2);
  \draw[line width=0.02cm] (s1) to [out=330, in=210] (t2);
  \draw[line width=0.02cm] (t1) to [out=330, in=210] (s3);
  \draw[line width=0.02cm] (s2) to [out=330, in=210] (t3);
  \draw[line width=0.02cm] (s4) to [out=310, in=230] (t2);
  \draw[line width=0.02cm] (t4) to [out=310, in=230] (s3);
  \draw[line width=0.02cm] (s1) to [out=310, in=230] (t3);
}

   \end{tikzpicture} } \\ \hline
\end{tabular}
\caption{The graph $G$ (and an illustration of the edge-sets $E_1,E_2,E_3,E_4,E_5$) 
in the proof of Theorem~\ref{NPhard}. The clauses of $I$ are $C_1=(v_1,v_2,v_3)$, $C_2=(\overline{v_1},\overline{v_2},v_4)$
and $C_3=(v_1,\overline{v_3},\overline{v_4})$. \noCol{The thick edge are "blue" and the thin edges are "red".}\JBJ{In $E_5$ we have taken $a_1=a_2=a_3=1$ and $a_4=2$.}}\label{fig:no2colpart}
\end{center}
\end{figure}
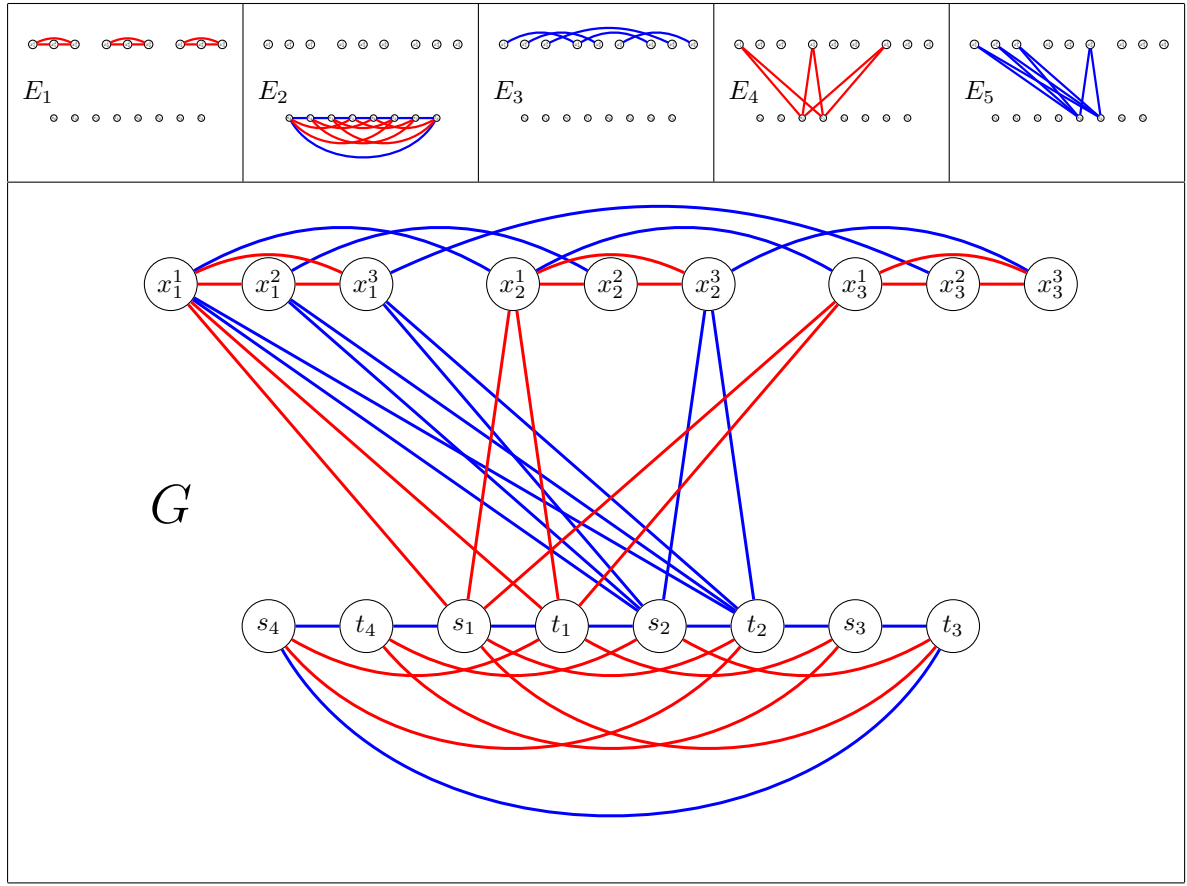

\begin{theorem}
\label{NPhard}
Let $G$ be a $2$-edge-colored graph. Deciding if there exists a spanning bipartite subgraph, $B$, of $G$, such that the edges of $B$ of each of the colors induce a spanning connected subgraph of $G$ is NP-complete.
\end{theorem}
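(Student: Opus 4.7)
NP-membership is clear: given a candidate bipartition $(V_1,V_2)$, we verify in polynomial time that both the blue and red cut graphs are connected and spanning on $V(G)$. By Lemma~\ref{lem1}, any spanning bipartite subgraph $B$ of $G$ whose red and blue parts are each connected and spanning must have both colours sharing the same bipartition as $B$; hence the problem is equivalent to asking whether $V(G)$ admits a bipartition $(V_1,V_2)$ such that both the blue and red cuts form spanning connected bipartite graphs. This reformulation is the starting point of the hardness proof.

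For NP-hardness we reduce from 3-SAT. Given an instance $I$ with variables $v_1,\dots,v_n$ and clauses $C_1,\dots,C_m$, we build a 2-edge-coloured graph $G$ with a variable gadget $\{s_i,t_i\}$ for each variable and a clause gadget $\{x_j^1,x_j^2,x_j^3\}$ for each clause, wired by five edge-families $E_1,\dots,E_5$ as illustrated in the figure. The intended semantics is that the relative sides of $s_i$ and $t_i$ in $(V_1,V_2)$ record a truth value for $v_i$. The red triangle in $E_1$ on each clause gadget forces the set $\{x_j^1,x_j^2,x_j^3\}$ not to be monochromatic in $(V_1,V_2)$, so every clause gadget is split. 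The blue cycle through all $s_i,t_i$ in $E_2$, together with its red chords, ensures that the variable region has a well-defined alternating pattern along the cycle, so a consistent global truth assignment can be read off. The blue chords in $E_3$ between clause gadgets, the red edges in $E_4$ from variable gadgets to positive-literal vertices, and the blue edges in $E_5$ from variable gadgets to negative-literal vertices are designed so that each colour's cut propagates spanning connectivity through the clause gadgets exactly via the literals satisfied by the intended assignment.

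The forward direction: given a satisfying assignment $\phi$ of $I$, we place $s_i,t_i$ according to $\phi$ and each $x_j^\ell$ on the side dictated by its literal under $\phi$, and then verify edge-family by edge-family that both monochromatic cuts span $V(G)$ and are connected. The satisfaction of $\phi$ is used exactly when checking that the cut across each clause gadget bridges to the variable region in both colours. The backward direction: from any valid bipartition, Lemma~\ref{lem1} and the blue cycle of $E_2$ force the placement of the $s_i,t_i$ to encode a truth assignment $\phi$, while the red triangle $E_1$, combined with the connectivity of the red cut through $E_4$, forces each clause gadget to contain at least one literal vertex on the side that corresponds to a literal satisfied by $\phi$.

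\textbf{Main obstacle.} The principal difficulty is designing the gadgets so that the spanning-connectivity requirements in both colours simultaneously (i) force the intended semantics on variable gadgets and on clause gadgets, and (ii) leave enough flexibility that any satisfying assignment yields a genuinely connected cut graph in each colour. In particular, for the backward direction one must rule out ``unintended'' bipartitions of $G$ that preserve spanning connectivity in both colours without encoding a valid truth assignment; this is where the interaction between $E_2$, $E_3$ and the literal-edges $E_4,E_5$ has to be analysed most carefully, tracing the unique route by which connectivity in each colour must traverse the clause gadgets.
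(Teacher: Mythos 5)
There is a genuine gap, and it is concentrated in the choice of the source problem. You reduce from 3-SAT, but the clause gadget you describe (a red triangle on $\{x_j^1,x_j^2,x_j^3\}$ whose role is to force that set not to be monochromatic in the bipartition) encodes the condition that each clause contains both a true and a false literal. That is \emph{not-all-equal} satisfiability, not ordinary satisfiability. Your forward direction therefore fails: a satisfying assignment of a 3-SAT instance may make all three literals of some clause true, in which case all three vertices of that clause gadget land on the same side, the two vertices other than $x_j^1$ have no red edge in the cut, and the red cut graph is not spanning. The paper avoids exactly this by reducing from NAE-3-SAT, where the intended placement (true literals in $X$, false literals in $Y$) automatically splits every clause gadget. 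Unless you either switch to NAE-3-SAT or redesign the clause gadget, the reduction as proposed does not establish hardness.

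Beyond that, the proposal remains a sketch at the points where the real work lies. Your "variable gadget $\{s_i,t_i\}$ per variable with a blue cycle" differs from the paper's construction, which uses a single fixed $K_{4,4}$ on eight vertices $s_1,\dots,s_4,t_1,\dots,t_4$ (decomposed into a red and a blue Hamilton cycle) purely as a connected anchor for both colours; the truth assignment is read off from the sides of the literal vertices themselves, and its \emph{consistency} is enforced not by any variable cycle but by applying Lemma~\ref{lem1} to the connected bipartite graph formed by the $E_3$ edges between occurrences of $v_k$ and of $\overline{v_k}$ (this is Claim~B in the paper: all those edges must lie in the cut, so all occurrences of $v_k$ are on one side and all occurrences of $\overline{v_k}$ on the other). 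You gesture at "tracing the unique route by which connectivity must traverse the clause gadgets" but do not supply this argument, nor the edge-family-by-edge-family verification of the forward direction. As written, the proposal identifies the right shape of construction but neither the correct source problem nor the two lemma-level claims that make the backward direction go through.
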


\pf We will reduce from {\em not-all-equal-$3$-SAT}, which is denoted by NAE-$3$-SAT for short. 
Let $I$ be an instance of NAE-$3$-SAT, with variables $v_1,v_2,\ldots,v_n$ and clauses $C_1,C_2,\ldots,C_m$.   
That is, each $C_i$ contain three literals (which is either a variable $v_j$ or the negation of a variable $\overline{v_j}$) and the purpose
is to find a truth assignment to all variables such that each clause has both positive and negative literals.
We may assume that no variable and its negation appear in the same clause, as we may simply remove such a clause without changing the problem.
We may also assume that each variable, $v_j$, appears as a literal in some clause and the negation, $\overline{v_j}$, of each variable also appears as a literal 
in some clause (otherwise we can put in 2 extra clauses $(v_j,a,b)$ and $(\overline{v_j},\overline{a},\overline{b})$ without changing the problem, 
where $a$ and $b$ are new variables). We now construct a $2$-edge-colored graph $G$ as follows.

\begin{itemize}
\item Let $V(G)=\{x_i^j \; | \; i=1,2,\ldots,m \mbox{ and } j=1,2,3 \} \cup \{s_1,s_2,s_3,s_4,t_1,t_2,t_3,t_4\}$
\item Let $E(G)= E_1 \cup E_2 \cup E_3 \cup E_4 \cup E_5$, where 
\begin{itemize}
\item $E_1 = \{x_i^1 x_i^2, x_i^2 x_i^3, x_i^3 x_i^1 \; | \; i=1,2,\ldots,m \}$.
\item $E_2 = \{s_i t_j \; | \; i,j \in \{1,2,3,4\} \}$. 
\item $E_3$ denotes all edges $x_i^j x_a^b$ where the $j$'th the literal in $C_i$ is the negation of the $b$'th literal in $C_a$. 
\item $E_4 = \{s_1 x_i^1, t_1 x_i^1 \; | \; i=1,2,\ldots,m \}$.
\item For each $i=1,2,\ldots,n$ let $a_i$ and $b_i$ be any values such that the $b_i$'th literal in $C_{a_i}$ is the literal $v_i$ (there may be many possible options
for $a_i$ and $b_i$ but we just pick an arbitrary one). Now let $E_5 = \{s_2 x_{a_i}^{b_i}, t_2 x_{a_i}^{b_i}  \; | \; i=1,2,\ldots,n \}$.
\end{itemize}
\end{itemize}

We note that the edges of $E_2$ induce a $K_{4,4}$, which contains two edge disjoint Hamilton cycles. Let the edges of one of the Hamilton cycles be colored red and 
the edges of the other Hamilton cycle be colored blue.
Let the color of all edges in $E_1 \cup E_4$ be red and let the color of all edges in $E_3 \cup E_5$ be blue.
This completes the definition of the $2$-edge-colored graph $G$. See Figure \ref{fig:no2colpart}. It is easy to check  that the red edges of $E_2$ form a  Hamilton cycle.

We will now show that $I$ is not-all-equal-satisfiable if and only if there exists a spanning bipartite subgraph, $B$ of $G$, where the red edges of $B$ induce a connected
spanning subgraph of $G$ and the blue edges  of $B$ induce a connected spanning subgraph of $G$.

First assume that $I$ is not-all-equal-satisfiable and that we are given the truth values of the variables $v_1,v_2, \ldots, v_n$ such that all clauses are
not-all-equal-satisfied. We initially let $X=\{s_i,s_2,s_3,s_4\}$ and $Y=\{t_1,t_2,t_3,t_4\}$. 
Whenever $v_k$ is true  we put $x_i^j$ in $X$ whenever the $j$'th literal in $C_i$ is $v_k$ and we put 
$x_i^j$ in $Y$ whenever the $j$'th literal in $C_i$ is $\overline{v_k}$. Analogously, whenever $v_k$ is false  we put $x_i^j$ in $Y$ whenever the $j$'th 
literal in $C_i$ is $v_k$ and we put $x_i^j$ in $X$ whenever the $j$'th literal in $C_i$ is $\overline{v_k}$. 
This partitions $V(G)$ into $X$ and $Y$. Let $B$ be the bipartite subgraph of $G$ induced by the partite sets $X$ and $Y$. Note that, by construction, $X$ ($Y$) contains all vertices of $G$ corresponding to literals that are true (false) under the given truth assignment.

Let $B_{blue}$ denote the subgraph containing all edges of $B$ colored blue and let $B_{red}$ denote the subgraph containing all edges of $B$ colored red. 
By the definition of the colors of $E_2$ (note that all edges of $E_2$ belong to $B$) we note that $s_1,s_2,s_3,s_4,t_1,t_2,t_3,t_4$ all belong to the same
connected component, $C_{red}$, of $B_{red}$ and to the same connected component, $C_{blue}$, of $B_{blue}$. 
By the edges in $E_4$ we note that $x_i^1$ also belongs to $C_{red}$ for all $i=1,2,\ldots,m$ (as either $s_1 x_i^1$ belongs to $B_{red}$
or $t_1 x_i^1$ belongs to $B_{red}$). Due to the edges in $E_1$ this implies that $V(C_{red})=V(G)$ as $I$ was not-all-equal-satisfiable, which implies
that two of the edges in $\{x_i^1 x_i^2, x_i^2 x_i^3, x_i^3 x_i^1\}$ belong to $B_{red}$.

By the edges in $E_5$ we note that $x_{a_i}^{b_i}$ belongs to $C_{blue}$ for all $i=1,2,\ldots,n$ (as either $s_2 x_{a_i}^{b_i}$ belongs to $B_{blue}$
or $t_2 x_{a_i}^{b_i}$ belongs to $B_{blue}$). Due to the edges in $E_3$, which all belong to $B_{blue}$ this implies that $V(C_{blue})=V(G)$ 
(as both literal $v_k$ and $\overline{v_k}$ occur in $I$ for all $k=1,2,\ldots,n$). Therefore $B_{red}$ and $B_{blue}$ are both spanning connected subgraphs of $G$,
as desired.

\2

Conversely assume that there exists a spanning bipartite subgraph, $B$ of $G$, where the red edges of $B$ induce a connected
spanning subgraph of $G$ and the blue edges  of $B$ induce a connected spanning subgraph of $G$. We will now show that $I$ is not-all-equal-satisfiable.
Let $X$ and $Y$ be the partite sets of $B$ and let $B_{red}$ be the subgraph of $B$ induced by all the red edges of $B$. We will first show the following
two claims.

\2

{\bf Claim A:} {\em $\{x_i^1,x_i^2,x_i^3\} \cap X \not= \emptyset$ and $\{x_i^1,x_i^2,x_i^3\} \cap Y \not= \emptyset$ for all $i=1,2,\ldots,m$.}

{\bf Proof of Claim A:}
If $\{x_i^1,x_i^2,x_i^3\} \subseteq X$ then $B_{red}$ is not connected as $x_i^2$ and $x_i^3$ won't be incident with any red edges in $B_{red}$. 
Analogously, if $\{x_i^1,x_i^2,x_i^3\} \subseteq Y$ then $B_{red}$ is not connected as again $x_i^2$ and $x_i^3$ won't be incident with any red edges in $B_{red}$. 
So we must have $\{x_i^1,x_i^2,x_i^3\} \cap X \not= \emptyset$ and $\{x_i^1,x_i^2,x_i^3\} \cap Y \not= \emptyset$, as desired.

\2

{\bf Claim B:} {\em If the $j$'th literal in $C_i$ is $v_k$ and the $p$'th literal in $C_q$ is $\overline{v_k}$ then $x_i^j x_q^p \in E(B)$.}

{\bf Proof of Claim B:} Recall that $E_3$ denotes all edges $x_i^j x_a^b$ where the $j$'th the literal in $C_i$ is the negation of the $b$'th literal in $C_a$.
Let $E_3^k$ be all edges of $E_3$ where the above literal is $v_k$ or $\overline{v_k}$. We note that $E_3^k$ induces a bipartite graph, $B_3^k$, where
the partite sets correspond to the vertices of $G$ that correspond to $v_k$ and $\overline{v_k}$, respectively.
So $B_3^k$ is connected and bipartite. As only one vertex of $B_3^k$ has blue edges to vertices not in $V(B_3^k)$ in $G$, Lemma~\ref{lem1} implies
that all edges of $B_3^k$ must belong to $B$, which completes the proof of Claim~B.

\2

By Claim~B we can let $v_k$ be true if $x_i^j \in X$ for some $i$ and $j$ where the $j$'th clause in $C_i$ is $v_k$ and we can let
$v_k$ be false if $x_i^j \in Y$ for some $i$ and $j$ where the $j$'th clause in $C_i$ is $v_k$. By Claim~B the above definition is well-defined.
Note that if $x_i^j \in X$ then the $j$'th literal in $C_i$ is true and if $x_i^j \in Y$ then the $j$'th literal in $C_i$ is false.
By Claim~A we note that the above truth assignment implies that $I$ is not-all-equal-satisfiable, which completes the proof.~\qed

\section{Remarks and open problems}\label{sec:remarks}

The graph $G$ which we constructed in Section \ref{sec:no3part} has 147 vertices and there may exist smaller 2-edge-colored graphs with no majority 3-partition. 

  \begin{problem}
   Determine the largest $N$ such that every 2-edge-colored graph $H$ on $N$ vertices has a majority 3-partition.
    \end{problem}

Each of the following questions deal with questions concerning possible   majority 3-partitions.

\begin{itemize}
\item Does every 2-edge-colored complete graph have a majority 3-partition?
\item Does every eulerian  (red degree equals blue degree at every vertex)  2-edge-colored graph have a majority 3-partition?
\item Does there exist an integer $k$ such that every 2-edge-colored graph $G$ in which every vertex $v$ has red and blue degree at least $k$ has a majority 3-partition?
  \item Let $G=(V,E)$ be a 2-edge-colored graph and call a subset $X\subset V$ {\bf good} if each vertex $v\in X$ has at least half of its blue neighbours outside $X$ and at least half of its red neighbours outside $X$. Question: Does every 2-edge-colored graph $G$ on $n$ vertices have a good subset of size at least $n/3$? Clearly if $G$ has a majority 3-partition, then it has several such sets.
\end{itemize}

\end{document}